\newtheorem{theorem}{Theorem}[section]
\newtheorem{corollary}[theorem]{Corollary}
\newtheorem{conj}[theorem]{Conjecture}
\newtheorem{lemma}[theorem]{Lemma}
{
\newtheorem{prop}[theorem]{Proposition}
}
\theoremstyle{definition}
\newtheorem{defn}[theorem]{Definition}
\newtheorem{remark}[theorem]{Remark}
\newtheorem{example}[theorem]{Example}
\renewcommand\footnotemark{}
\newenvironment{enum}
 {\begin{list}{\labelitemi}{\leftmargin=0em \itemindent=0.5em}}
 {\end{list}}
\newcommand{\rar}{rational tree with almost reduced Artin cycle}
\newcommand{\Addresses}{{
  \bigskip
  \small

  Kazunori Nakamoto, \textsc{{    Center for medical educations and sciences, 
  Faculty of Medicine, University of Yamanashi, Yamanashi, Japan}}\par\nopagebreak
  \textit{e-mail address}: \texttt{nakamoto@yamanashi.ac.jp}

  \medskip

  Ay{\c s}e Sharland, \textsc{Department of Mathematics, University of Rhode Island, Kingston, RI 02881, US}
  \par\nopagebreak
  \textit{e-mail address}: \texttt{aysharland@uri.edu}

  \medskip

Meral Tosun,  \textsc{Department of Mathematics, Galatasaray University, Ortak{\"o}y 34357, Istanbul, Turkey}\par\nopagebreak
  \textit{e-mail address}: \texttt{mrltosun@gmail.com}

}}
\title{
 {Triple Root Systems, Rational Quivers and Examples of Linear Free Divisors}} 
\author{K. Nakamoto, A. Sharland and M. Tosun}
\begin{document}
\maketitle

\begin{flushright}\begin{small}Dedicated to Jose Seade on his 60th birthday. \end{small}\end{flushright}
\let\thefootnote\relax\footnotetext{2000 {\em Mathematics Subject Classification} 14B05 (primary), 32S25 (secondary)}

\begin{abstract} The dual resolution graphs of rational triple point singularities  can be seen as a generalisation of Dynkin diagrams. In this work, we study the relation between the root systems corresponding to those diagrams. We determine the number of roots for each rational triple point singularity, and show that for each root we obtain a linear free divisor. Furthermore, we deduce that linear free divisors defined by {    rational triple quivers with roots in the corresponding triple root systems} satisfy the global logarithmic comparison theorem. We also discuss {   a} generalisation of these results to the class of rational singularities with almost reduced Artin cycle. 
\end{abstract}

\section{Introduction}

Let $D$ be a reduced hypersurface in $\mathbb{C}^N$.  In \cite{saito}, K. Saito associated $D$ with the sheaf of logarithmic vector fields along $D$, denoted by 
$\textnormal{Der}(-\textnormal{log }D)$, which is 
the subsheaf of the holomorphic vector fields on $(\mathbb C^N,0)$.  If $\textnormal{Der}(-\textnormal{log }D)$ is a locally free $\mathcal O_ {\mathbb{C}^N,0}$-module then $D$ is said to be a free divisor (also known as Saito divisor in the literature). The basic examples of a free divisor are normal crossing divisors and reduced plane curve singularities (\cite{saito}). The example motivating K. Saito and attracting the attention of many other mathematicians on the subject is the freeness of discriminants in the base space of the versal deformation of isolated hypersurface singularities. In \cite{buch-mond}, Buchweitz and Mond introduced a new class of free divisors arising from the representation of quivers. The discriminant in the representation space of a quiver with {   a} given dimension vector is a \textit{linear} free divisor; that is, $\textnormal{Der}(-\textnormal{log }D)$ is a free module and generated by vector fields 
which only have linear function coefficients with respect to the standard basis, {   provided: The quiver is a tree with a real Schur root 
and $\det \Delta$ which defines the discriminant $D$ 
is reduced 
(see Section \ref{sect-quiv} for details).} The first examples in this direction are \textit{Dynkin} quivers; that is, a quiver whose underlying unoriented graph is a Dynkin diagram of type ADE. A Dynkin quiver with a real Schur root as a dimension vector determines a linear free divisor (\cite{buch-mond}). It is well known that the Dynkin diagrams appear as the dual graph of the minimal resolution of rational surface singularities of complex surfaces of multiplicity $2$. Further examples of linear free divisors and a classification in dimension $N\leq 4$ was given in \cite{granger-mond-r-s}.

Recall that a singularity of a normal surface is \textit{rational} if the geometric genus of the surface is unchanged by a resolution of the singularity. The rational singularities of surfaces are classified by their multiplicity $m$ which equals $-Z^2$ where $Z$ is the Artin cycle of the resolution. When $m=2$, the dual graph of the minimal resolution of the rational singularity is one of the Dynkin diagrams, which are the underlying graphs of the Dynkin quivers.  So we naturally ask ``What are the other dual resolution graphs of singularities which may give linear free divisors?". Here we try to answer this question. 

By {   \cite[Theorem 3.9]{granger-mond-schulze},} linear free divisors can only be obtained from a quiver having the form of a tree. In this work, we are interested in rational singularities of surfaces as their dual resolution graphs are trees. We will refer to their resolution graphs as \textit{rational trees}. In particular, we consider rational singularities for which, in the Artin cycle, the coefficients of the exceptional curves corresponding to the vertices with weight $\geq 3$ are all equal to one. The singularities having such resolution graphs are called rational singularities with almost reduced Artin cycle (\cite{gustavsen}). The simplest singularities of surfaces of that type, which can also be seen as a generalisation of rational surface singularities of multiplicity $2$, are the rational singularities of multiplicity $3$, called rational triple points (or RTP for short). They are also determinantal singularities (\cite{wahl-equations}). Moreover, their dual resolution graphs contain Dynkin diagrams as subtrees. These are the simplest rational trees (quivers) answering our question above. 

We start by constructing a new root system for rational triple points using the results of \cite{M} and determine the number of roots for each RTP-singularity (cf. \cite{bourbaki}). In Section \ref{sect-free}, we show that, for each root, we obtain a linear free divisor (Theorem \ref{prop-lfd}). We also prove that this construction is independent of the orientation on the tree. Furthermore, using {    
the result in \cite{granger-mond-r-s}}, 
we deduce that linear free divisors defined by rational triple quivers satisfy the global logarithmic comparison theorem. In Section \ref{sect-quasi}, we {    give ideas on a generalisation } 
to rational singularities with almost reduced Artin cycle. 

In the 1970's Gabriel proved that a quiver is of finite representation type if and only if its underlying graph is a Dynkin diagram of type A, D or E (\cite{gabriel-2}). Then the classical McKay correspondence  described the bijection between the set of isomorphism classes of nontrivial irreducible representations of finite subgroups of $\textnormal{SL}(2,\mathbb C)$ and the vertices of the corresponding Dynkin diagrams of the quotient singularities (see~\cite{M}). In the case of rational singularities with almost reduced Artin cycle, even in the special case of RTP's, we do not have a Lie algebra corresponding to the singularity -- only some of the RTP's appear as quotient singularities (\cite{riemens}). 
In other words, the relation between the theory of representations of finite subgroups of our quivers and the minimal resolutions of these singularities is yet to be discovered. By constructing the root system for RTP singularities and  $\textnormal{Der}(- \textnormal{log }D)$, we hope to find Lie algebras for each RTP-singularity. 
 This is feasible since an explicit definition of reflexive modules for rational singularities is already given in \cite{wunram}. 


\section{Rational Singularities of Complex Surfaces}

Let $X$ be a germ of a normal surface embedded in $\mathbb C^N$ with a singularity at $0$. Let $\pi\colon \tilde X\rightarrow X$ be {   a} minimal resolution of $X$. The singularity of $X$ at $0$ is \textit{rational} if ${   \dim R^{1}\pi_{\ast}{\mathcal O}_{\tilde X}=0}$. This implies that the exceptional fibre $\pi^{-1}(0)$, denoted by $E:=\cup_{i=1}^{n}E_i$,  is a normal crossing divisor, each $E_i$ is a rational curve and the dual graph $\Gamma $ associated with $E$ is a weighted tree  with \textit{weight} $w_i:=-E_i^2$ at each vertex.  Recall that a {   non-zero} divisor $Y=\sum_{i}m_iE_i$ supported on $E$  (or equivalently, on $\Gamma $) is called a positive {    (resp. negative) divisor if $m_i\in {\mathbb N}$ (resp. $-m_i\in {\mathbb N}$) for all $i$.} A positive divisor $Z$ is called the Artin cycle of $\pi $ if 
{    $Z$ is the smallest positive divisor satisfying $(Z\cdot E_i)\leq 0$ for each $i$ (see \cite{artin}). } 
We will denote it by $Z=\sum_{i=1}^{n}a_iE_i$. 
{    For any rational surface singularity $(X, 0)$, the following $(1)$ and $(2)$ hold: } 
\newline $(1)$  For all  positive divisors $Y{    =\sum_{i}m_iE_i}$ that are supported on $E$, 
we have:
$$p_a(Y):=\frac{1}{2}\left[Y\cdot Y+\sum_{i=1}^{n}m_i(w_i-2) \right ]+1\leq 0.$$
\noindent $(2)$ The Artin cycle $Z$ of $\Gamma $ satisfies $Z\cdot Z=-m$ where $m$ is the multiplicity of $X$ at the singularity $0$.

\begin{remark} The Artin cycle $Z(\Gamma )$ of a rational tree $\Gamma $ with vertices $E_0,\ldots ,E_n$ is constructed by  Laufer's algorithm as follows (\cite{laufer}).  We put $Z_1:=\sum_{i=1}^{n}E_i$. If $(Z_1\cdot E_i)\leq 0$ for all $i=0,\ldots ,n$ then $Z_1$ is the Artin cycle $Z(\Gamma )$. If there exists an $E_{i_1}$ among the vertices of $\Gamma $ such that $(Z_1\cdot E_{i_1})>0$; in this case, we put $Z_2:=Z_1+E_{i_1}$ and  check  whether $(Z_2\cdot E_i)\leq 0$ for all $i$. At the $j$th step, we find a cycle $Z_j$, $(j\geq 1)$, which satisfies, either $(Z_j\cdot E_i)\leq 0$ for all $i$, in which case we put $Z(\Gamma):=Z_j$, or there is an irreducible component $E_{i_j}$ such that $(Z_j\cdot E_{i_j})>0$, then we put $Z_{j+1}=Z_j+E_{i_j}$. This process ends after a finite number of steps. The Artin cycle of $\Gamma $ is the first cycle $Z_k$ of this sequence such that $(Z_k\cdot E_i)\leq 0$ for all $i$.
\end{remark}

 Conversely, for a given tree $\Gamma $ with vertices $E_1,\ldots ,E_n$, let us assign a weight $w_i\in \mathbb N^*$ for each $E_i$.  Consider the incidence matrix $M(\Gamma )=(e_{ij})_{i,j=1,\ldots ,n}$, associated with $\Gamma $ such that $e_{ii}$ equals $-w_i$ and $e_{ij}$ is the number of edges between the vertices $E_i$ and $E_j$, which is $0$ or $1$ since $\Gamma$ is a tree. If $M(\Gamma )$ is negative definite then, by Grauert's theorem in \cite{grauert},  $\Gamma $ is the dual graph of the exceptional fibre of a resolution of a germ of a normal surface singularity where $E_i^2=-w_i$ for $i=1,\ldots, n$. When $w_i\geq 2$ for all $i$,  $\Gamma $ is the dual graph of the minimal resolution. 

\begin{defn}[\cite{artin}] Following this construction, if, in addition, {    a tree} 
$\Gamma$ satisfies 
{    $(1)$ and $(2)$ given above,} then the singularity is rational. In this case, $\Gamma$ is called a rational $m$-tuple tree. 
\end{defn}

  For example, Dynkin diagrams are the rational 2-tuple trees. These are the only rational trees having weight $2$ at each vertex.

\begin{prop}[\cite{artin, Le-tosun}] \label{prop-subtree} Any subtree of a rational $m$-tuple tree is a rational $m'$-tuple tree with $m'\leq m$.
\end{prop}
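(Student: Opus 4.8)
The plan is to check, one at a time, that a subtree $\Gamma'$ of a rational $m$-tuple tree $\Gamma$ satisfies the defining conditions; the guiding observation is that negative definiteness, the genus inequality and the multiplicity formula are all ``local on the support''.

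\emph{Negative definiteness.} Since $\Gamma$ is a tree, $M(\Gamma')$ is exactly the principal submatrix of $M(\Gamma)$ indexed by the vertices of $\Gamma'$: the edge counts and the weights $w_i$ are inherited unchanged. A principal submatrix of a negative definite symmetric matrix is negative definite, so by Grauert's theorem $\Gamma'$ is the dual graph of the minimal resolution (the weights are still $\ge 2$) of a normal surface germ; write $m'$ for its multiplicity.

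\emph{Rationality (property (1)).} If $Y=\sum m_iE_i$ is a positive divisor supported on $\Gamma'$, it is also a positive divisor supported on $\Gamma$, and $p_a(Y)=\frac{1}{2}\big[Y\cdot Y+\sum_i m_i(w_i-2)\big]+1$ involves only the vertices in the support of $Y$ and the corresponding entries of the incidence matrix, hence takes the same value computed inside $\Gamma'$ or inside $\Gamma$. Since it is $\le 0$ in $\Gamma$, it is $\le 0$ in $\Gamma'$, so the germ attached to $\Gamma'$ is rational; property (2) then just says $m'=-Z'\cdot Z'$ for the Artin cycle $Z'$ of $\Gamma'$.

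\emph{The inequality $m'\le m$.} Let $Z=\sum a_iE_i$ and $Z'=\sum a'_iE_i$ be the Artin cycles of $\Gamma$ and $\Gamma'$; recall they coincide with Artin's fundamental cycles, so every $a_i\ge 1$ and $Z'$ is the minimal positive cycle on $\Gamma'$ with $Z'\cdot E_i\le 0$ for all $i$. The truncation $Z|_{\Gamma'}:=\sum_{E_i\in\Gamma'}a_iE_i$ still satisfies $Z|_{\Gamma'}\cdot E_i\le 0$ for every $E_i$ of $\Gamma'$: writing $Z=Z|_{\Gamma'}+R$ with $R\ge 0$ supported on $\Gamma\setminus\Gamma'$, one has $R\cdot E_i\ge 0$ for such $E_i$ (no common components), so $Z|_{\Gamma'}\cdot E_i=Z\cdot E_i-R\cdot E_i\le 0$; since $Z|_{\Gamma'}$ is positive, minimality of $Z'$ gives $a'_i\le a_i$ on $\Gamma'$. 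Now $p_a(Z)=p_a(Z')=0$ turns the genus formula into $-Z\cdot Z=2+\sum_{E_i\in\Gamma}a_i(w_i-2)$ and $-Z'\cdot Z'=2+\sum_{E_i\in\Gamma'}a'_i(w_i-2)$, whence
\[
m-m'=\sum_{E_i\in\Gamma'}(a_i-a'_i)(w_i-2)+\sum_{E_i\in\Gamma\setminus\Gamma'}a_i(w_i-2)\ \ge\ 0,
\]
because $w_i\ge 2$ everywhere, $a_i\ge a'_i$ on $\Gamma'$, and $a_i\ge 1$ off $\Gamma'$. I expect this last step to be the main obstacle: it is the only place that genuinely uses rationality (through $p_a(Z)=p_a(Z')=0$) rather than mere negative definiteness, and it hinges on the identification of the Artin cycle with the fundamental cycle together with the comparison $Z'\le Z|_{\Gamma'}$; everything else is bookkeeping.
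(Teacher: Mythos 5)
Your argument is correct, and it is worth noting that the paper itself gives no proof of this proposition at all: it is stated with citations to Artin and to L\^e--Tosun, so you are supplying an argument where the authors defer to the literature. Your three steps are exactly the standard ones. Negative definiteness of a principal submatrix, the observation that $p_a(Y)$ for $Y$ supported on $\Gamma'$ is the same whether computed in $\Gamma'$ or in $\Gamma$ (so Artin's criterion $p_a\le 0$ is inherited), and the comparison of fundamental cycles via the truncation $Z|_{\Gamma'}$ are all sound; the final computation $m-m'=\sum_{E_i\in\Gamma'}(a_i-a'_i)(w_i-2)+\sum_{E_i\in\Gamma\setminus\Gamma'}a_i(w_i-2)\ge 0$ correctly uses $w_i\ge 2$ (which holds here since rational trees in this paper carry the weights of the minimal resolution) together with $p_a(Z)=p_a(Z')=0$. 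Two small points you should make explicit: first, ``subtree'' must be read as a \emph{connected} subgraph for ``the'' Artin cycle $Z'$ to be defined (otherwise argue component by component); second, the step ``minimality of $Z'$ gives $a'_i\le a_i$'' uses that the fundamental cycle is the unique \emph{minimum}, not merely a minimal element, of the set of positive anti-nef cycles on $\Gamma'$ --- this follows from the standard lemma that the componentwise minimum of two positive anti-nef cycles is again anti-nef, and deserves a one-line mention since it is the only place where the comparison $Z'\le Z|_{\Gamma'}$ is actually justified.
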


  Let $\Gamma $ be a rational $m$-tuple tree. Let us consider its vertices $E_i$ with weight $\geq 3$ and reindex them as $E_{i_0},E_{i_1},\ldots ,E_{i_k}$. Then we have 
\[ \Gamma -\lbrace E_{i_0},E_{i_1},\ldots ,E_{i_k}\rbrace =\coprod_j \Gamma_j \]
where $\Gamma_j$ is a Dynkin diagram for each $j$. {    A rational $m$-tuple tree} containing a unique vertex  with weight $w\geq 3$  (denoted by the symbol $\blacksquare$)  can be seen as in Figure \ref{fig-star}.

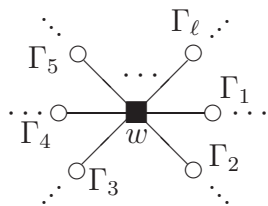
\begin{figure}[h!]
\setlength{\unitlength}{1mm}
\begin{picture}(120,30)(0,10)
\put(68.2,22){\line(-1,0){8}}
\put(68,18){$w$}
\put(68.25,21.25){\line(-1,-1){6}}
\put(67.8,20.6){$\blacksquare$}
\put(68.5,23){\line(-1,1){6}}
\put(70.25,21.25){\line(1,-1){6}}
\put(67.5,26){$\cdots$}
\put(70.25,23){\line(1,1){6}}
\put(70.45,22){\line(1,0){8}}
\put(77,30){\circle{2}}
\put(78.9,32){$.$}
\put(79.9,33){$.$}
\put(80.9,34){$.$}
\put(77,14.5){\circle{2}}
\put(78.9,11.9){$.$}
\put(79.9,10.9){$.$}
\put(80.9,9.9){$.$}
\put(61.5,14.3){\circle{2}}
\put(58.6,11.9){$.$}
\put(57.6,10.9){$.$}
\put(56.6,9.9){$.$}
\put(61.7,29.9){\circle{2}}
\put(58.6,32.2){$.$}
\put(57.6,33.2){$.$}
\put(56.6,34.2){$.$}
\put(79.6,22){\circle{2}}
\put(82,21){$\cdots$}
\put(59.1,22){\circle{2}}
\put(52.2,21){$\cdots$}
\put(80.5,24){$\Gamma_1$}
\put(79,15){$\Gamma_2$}
\put(63,11.75){$\Gamma_3$}
\put(54,18){$\Gamma_4$}
\put(55,28){$\Gamma_{5}$}
\put(74,33){$\Gamma_\ell$}
\end{picture}
\caption{Star rational tree.}
\label{fig-star}
\end{figure}

 In addition, we have the following property of rational trees which bounds the number of the subtrees $\Gamma_j$.

\begin{prop}[\cite{spivakovsky}] \label{prop-valency} If $\Gamma$ is a rational tree, then each vertex $E_i$ satisfies \[v_i \leq w_i+1\]
where  $v_i$ is the number of vertices adjacent to $E_i$ in $\Gamma$, called the \textit{valency} of $E_i$ and, $w_i$ is the weight of $E_i$ given by $-E_i^2$.
\end{prop}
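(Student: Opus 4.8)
The plan is to argue by contradiction, exhibiting an explicit positive divisor supported on $E$ whose arithmetic genus becomes positive as soon as $v_i\geq w_i+2$, in violation of the numerical criterion $(1)$ characterizing rational trees. Fix a vertex $E_i$ and let $N(i)$ be the set of its $v_i$ neighbours in $\Gamma$. Since $\Gamma$ is a tree it has no cycles, so no two distinct vertices of $N(i)$ can be joined by an edge; hence $E_j\cdot E_k=0$ for distinct $j,k\in N(i)$, while $E_i\cdot E_j=1$ for every $j\in N(i)$ and $E_\ell^2=-w_\ell$ for every $\ell$. These are the only intersection numbers that enter the computation, so no properties of $\Gamma$ beyond its being a tree (and weighted) are used.

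The crucial step is the choice of test divisor. I would take $Y=2E_i+\sum_{j\in N(i)}E_j$, which is a positive divisor supported on $E$ (indeed on the subtree spanned by $E_i$ and $N(i)$). Using the intersection numbers above, $Y\cdot Y=-4w_i+4v_i-\sum_{j\in N(i)}w_j$, and with $m_i=2$, $m_j=1$ for $j\in N(i)$ one gets $\sum_\ell m_\ell(w_\ell-2)=2w_i-4+\sum_{j\in N(i)}w_j-2v_i$. Adding these the terms $\sum_{j\in N(i)}w_j$ cancel, and the definition of $p_a$ gives $p_a(Y)=\frac12\bigl(2v_i-2w_i-4\bigr)+1=v_i-w_i-1$.

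Now suppose for contradiction that $v_i\geq w_i+2$. Then $p_a(Y)=v_i-w_i-1\geq 1>0$, contradicting property $(1)$ in the definition of a rational $m$-tuple tree. Hence $v_i\leq w_i+1$, as claimed. (If one prefers, one may first replace $\Gamma$ by the subtree spanned by $E_i$ and its neighbours, which is again a rational tree by Proposition~\ref{prop-subtree}, and run the identical computation there; but this detour is unnecessary since $Y$ is already supported on $E$.) Note that the bound is sharp: when $v_i=w_i+1$ the same computation yields $p_a(Y)=0$, so no contradiction arises.

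I do not expect any genuine obstacle here: the entire content is the observation that doubling the coefficient of the central vertex $E_i$ — rather than taking the reduced divisor $E_i+\sum_{j\in N(i)}E_j$, whose arithmetic genus is identically $0$ and therefore detects nothing — produces a divisor whose genus measures precisely the excess valency $v_i-w_i-1$. After that, the proof is a single line of intersection-number bookkeeping against criterion $(1)$.
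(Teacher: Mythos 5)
Your proof is correct. The bookkeeping checks out: for $Y=2E_i+\sum_{j\in N(i)}E_j$ one indeed gets $Y\cdot Y=-4w_i+4v_i-\sum_{j\in N(i)}w_j$ and $\sum_\ell m_\ell(w_\ell-2)=2w_i-4+\sum_{j\in N(i)}w_j-2v_i$, so $p_a(Y)=v_i-w_i-1$, and since $Y$ is a positive divisor supported on $E$, criterion $(1)$ forces $v_i-w_i-1\leq 0$, i.e.\ $v_i\leq w_i+1$. Note that the paper itself gives no proof of this proposition --- it is simply quoted from \cite{spivakovsky} --- so your argument is a self-contained elementary verification rather than a variant of an argument in the text. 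Your choice of test divisor is the essential point: as you observe, the reduced star $E_i+\sum_{j\in N(i)}E_j$ has arithmetic genus $0$ regardless of the valency and detects nothing, whereas doubling the coefficient at the central vertex makes the genus equal exactly to the excess $v_i-w_i-1$; the only tree-theoretic input is that distinct neighbours of $E_i$ are not adjacent, which you state and use correctly. The remark that the bound is attained (so no stronger inequality can come out of this particular divisor) is also accurate.
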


 Furthermore, any given tree can be weighted in such a way that it becomes rational. See \cite{Le-tosun} for the details and the glueing conditions of rational trees.

\section{Triple Root System}\label{sect-tripleroots}

The classification of rational 3-tuple trees is given by Artin in \cite{artin}.  They are of the form given in Figure \ref{fig-star} such that $w=3$ and $\ell \leq 3$ where $\ell$ is the valency of the \textit{central vertex}, i.e. the vertex denoted by {    $\blacksquare$.} The corresponding rational singularities are singularities of surfaces embedded in $\mathbb{C}^4$ and defined by three equations as listed in \cite{tjurina}.  

  In the sequel, a rational $3$-tuple tree $\Gamma $ will be called an RTP-tree.  The Artin cycle $Z$ of an RTP-tree  satisfies $Z^2=-3$ and $p_a(Z)=0$. Hence, by the formulas given in $(1)$ and $(2)$ above, an RTP-tree  contains a unique vertex with weight $3$ and attached to some Dynkin diagrams. Moreover, {    by $Z^2=-3$ and $p_a(Z)=0$, } 
the coefficient of the vertex with weight $3$ in the Artin cycle $Z$ is equal to $1$. 

\begin{defn}\label{def-tripleroot}\rm Let $\Gamma $ be an RTP-tree and $\mathcal{G}$ be the set of all positive and negative divisors supported on $E$. Consider the set {    
\[R(\Gamma ):=\left\{ Y\in\mathcal{G} \mid (Y\cdot Y)=-2\ \ \textnormal{or} \ \ (Y\cdot Y)=-3\right\}. \]
The set $R(\Gamma )$ is called  the {\it triple root system}. }
\end{defn}

  We call an element of $R(\Gamma)$ a {\it root}. We also call each $E_i$ a {\it simple (positive) root} of $R(\Gamma)$.
Note that for Dynkin diagrams, the set of divisors $Y$ in $\mathcal{G}$ with the property  $(Y\cdot Y)=-2$ form the root system (\cite{pinkham}, \cite{M}, compare with \cite{bourbaki}).

 From now on, we will denote by $E_0$  the unique vertex in an RTP-tree with weight $3$  and the coefficient of $E_0$ in a root $Y$ by $a_0$.

\begin{lemma}\label{lemma:-2}
If $Y =\sum a_iE_i$ is a positive divisor on $\Gamma$ then $(Y\cdot Y)\leq -2$.
\end{lemma}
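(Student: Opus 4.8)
The plan is to read the bound off the rationality of $\Gamma$, using the arithmetic genus inequality $(1)$ recorded in Section~2 together with the fact that every vertex of a rational triple tree carries weight at least $2$.

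First I would note that, since $\Gamma$ is a rational triple tree, its incidence matrix $M(\Gamma)$ is negative definite, so $(Y\cdot Y)<0$ for every nonzero divisor $Y$ supported on $E$; as $(Y\cdot Y)\in\Z$, this already gives $(Y\cdot Y)\leq -1$. The whole content of the lemma is therefore to exclude the value $-1$, and this is exactly where rationality (not merely negative definiteness) is needed.

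Next, apply the numerical characterization $(1)$ of a rational singularity to the positive divisor $Y=\sum a_iE_i$: it gives $p_a(Y)\leq 0$, that is,
$$\frac12\left[(Y\cdot Y)+\sum_{i=1}^n a_i(w_i-2)\right]+1\leq 0,$$
equivalently $(Y\cdot Y)\leq -2-\sum_{i=1}^n a_i(w_i-2)$. Now every $a_i$ is a non-negative integer, because $Y$ is a positive divisor, and every weight $w_i$ of a rational triple tree satisfies $w_i\geq 2$: the central vertex $E_0$ has weight $3$ and all the remaining vertices lie on the attached Dynkin diagrams, hence have weight $2$ (alternatively, this is immediate from Proposition~\ref{prop-subtree}, since a single vertex of weight $1$ would be a rational $1$-tuple subtree). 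Consequently $\sum_{i=1}^n a_i(w_i-2)\geq 0$, and the displayed inequality yields $(Y\cdot Y)\leq -2$, which is the assertion.

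I do not expect a genuine obstacle here; the only point requiring care is making sure both ingredients are legitimately in force, namely the genus inequality $(1)$, which is part of the rationality package used throughout Section~2, and the bound $w_i\geq 2$, valid for the dual graph of a minimal resolution and in particular for rational triple trees. It is also worth recording, for use in the sequel, that equality $(Y\cdot Y)=-2$ forces $\sum_{i} a_i(w_i-2)=0$, hence $a_0=0$; so any positive divisor with $a_0\geq 1$ in fact satisfies $(Y\cdot Y)\leq -3$.
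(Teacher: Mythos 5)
Your proof is correct and is essentially the paper's own argument: the paper simply cites $p_a(Y)\leq 0$, and your write-up spells out the resulting inequality $(Y\cdot Y)\leq -2-\sum_i a_i(w_i-2)$ together with the observation $w_i\geq 2$, $a_i\geq 0$. The extra remark that equality forces $a_0=0$ is also consistent with what the paper later records in Proposition \ref{prop-a0}.
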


\begin{proof} It easily follows from the fact that $p(Y)\leq 0$ (see also \cite{artin}).\end{proof}

\begin{lemma}\label{lem-suppy}
If $Y \in R(\Gamma)$, then $Y$ is either a positive or negative divisor. 
\end{lemma}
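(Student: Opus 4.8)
The plan is to split an arbitrary integral divisor satisfying the root condition into its positive and negative parts and to show that one of the two parts must be zero, using the self-intersection bound of Lemma~\ref{lemma:-2} together with the non-negativity of intersection numbers of distinct exceptional components. So I would write $Y=\sum_i a_iE_i$ with $a_i\in\Z$ and set
\[
Y_+=\sum_{a_i>0}a_iE_i,\qquad Y_-=\sum_{a_i<0}(-a_i)E_i,
\]
so that $Y=Y_+-Y_-$, the supports of $Y_+$ and $Y_-$ are disjoint, and each of $Y_+,Y_-$ is either $0$ or a positive divisor supported on $E$. Expanding the self-intersection gives
\[
(Y\cdot Y)=(Y_+\cdot Y_+)-2(Y_+\cdot Y_-)+(Y_-\cdot Y_-).
\]

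First I would note that $(Y_+\cdot Y_-)\ge 0$: because $Y_+$ and $Y_-$ involve disjoint sets of curves, every intersection number entering this product is of the form $(E_i\cdot E_j)$ with $i\ne j$, hence equal to $0$ or $1$ since $\Gamma$ is a tree, and all the coefficients appearing are positive. Next, suppose for contradiction that both $Y_+$ and $Y_-$ are non-zero. Then each is a positive divisor, so Lemma~\ref{lemma:-2} yields $(Y_+\cdot Y_+)\le -2$ and $(Y_-\cdot Y_-)\le -2$. Combining these with $-2(Y_+\cdot Y_-)\le 0$ gives $(Y\cdot Y)\le -4$, contradicting $(Y\cdot Y)\in\{-2,-3\}$. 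Hence at least one of $Y_+,Y_-$ is zero; and since $(Y\cdot Y)\ne 0$ forces $Y\ne 0$, exactly one of them is zero. Thus $Y$ is a positive divisor when $Y_-=0$ and a negative divisor when $Y_+=0$.

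I do not expect a genuine obstacle here. The only point needing a word of justification is the inequality $(Y_+\cdot Y_-)\ge 0$, which is just the geometric fact that distinct prime exceptional components meet non-negatively — automatic in our situation because $\Gamma$ is a tree, so the off-diagonal entries of the incidence matrix $M(\Gamma)$ are $0$ or $1$. Everything else is the short numerical computation above combined with Lemma~\ref{lemma:-2}.
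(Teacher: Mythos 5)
Your argument is correct and is essentially the paper's own proof: the same decomposition $Y=Y_+-Y_-$ into positive parts without common components, the same expansion of $(Y\cdot Y)$, the observation $(Y_+\cdot Y_-)\ge 0$, and the contradiction $(Y\cdot Y)\le -4$ via Lemma~\ref{lemma:-2}. No changes are needed.
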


\begin{proof} Suppose that $Y = Y_{1}-Y_{2}$ such that
$Y_{1}$ and $Y_{2}$ are positive and without common components.
Then $(Y\cdot Y)=(Y_{1}\cdot Y_{1})-2(Y_{1}\cdot Y_{2})+(Y_{2}\cdot Y_{2})$.
Note that $(Y_{1}\cdot Y_{1}) \le 0$, $(Y_{2}\cdot Y_{2}) \le 0$, and
$(Y_{1}\cdot Y_{2}) \ge 0$. If $Y_1 \neq 0$ and $Y_2 \neq 0$, then $(Y_1\cdot Y_1) \le -2$ and
$(Y_2\cdot Y_2) \le -2$ by Lemma \ref{lemma:-2}. However this contradicts
$(Y\cdot Y) = -2$ or $-3$. Hence $Y_1=0$ or $Y_2=0$. \end{proof}

\begin{lemma}\label{support}
If $Y = \sum a_i E_i \in R(\Gamma)$, then ${\rm Supp}(Y)$ is connected where $\textnormal{Supp}(Y)$ is the support of  $Y$ which is the set of $E_i$'s for which $a_i\neq 0$.
\end{lemma}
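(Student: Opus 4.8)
\emph{Proof plan.} The strategy is a contradiction argument resting on the self-intersection bound of Lemma \ref{lemma:-2}, exactly in the spirit of the proof of Lemma \ref{lem-suppy}. First I would reduce to the case of a positive divisor. By Lemma \ref{lem-suppy}, $Y$ is either positive or negative; since $-Y$ has the same support and the same self-intersection number $(Y\cdot Y)$ as $Y$, and since Lemma \ref{lemma:-2} is a statement about positive divisors, it costs nothing to assume $a_i\ge 0$ for all $i$. Note also that $Y\neq 0$, because $(Y\cdot Y)\in\{-2,-3\}$; hence $\mathrm{Supp}(Y)$ is a nonempty subgraph of $\Gamma$.

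Next, suppose for contradiction that $\mathrm{Supp}(Y)$ is disconnected. Then I can split $\mathrm{Supp}(Y)=S_1\sqcup S_2$ with $S_1,S_2$ both nonempty and with no edge of $\Gamma$ joining a vertex of $S_1$ to a vertex of $S_2$ (take $S_1$ to be one connected component of $\mathrm{Supp}(Y)$ and $S_2$ the union of the remaining components). Writing $Y_\nu=\sum_{E_i\in S_\nu}a_iE_i$ for $\nu=1,2$, each $Y_\nu$ is a nonzero positive divisor supported on $E$, and $Y=Y_1+Y_2$. Since for $i\neq j$ the intersection number $E_i\cdot E_j$ equals the number of edges between $E_i$ and $E_j$, the absence of edges between $S_1$ and $S_2$ forces $(Y_1\cdot Y_2)=0$.

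Then I would expand $(Y\cdot Y)=(Y_1\cdot Y_1)+2(Y_1\cdot Y_2)+(Y_2\cdot Y_2)=(Y_1\cdot Y_1)+(Y_2\cdot Y_2)$. Applying Lemma \ref{lemma:-2} to the nonzero positive divisors $Y_1$ and $Y_2$ gives $(Y_1\cdot Y_1)\le -2$ and $(Y_2\cdot Y_2)\le -2$, hence $(Y\cdot Y)\le -4$. This contradicts $(Y\cdot Y)=-2$ or $-3$, so $\mathrm{Supp}(Y)$ is connected.

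I do not expect any genuine obstacle in this argument; it is a short adaptation of the proof of Lemma \ref{lem-suppy}. The only places needing a little care are the initial reduction to the positive case (so that Lemma \ref{lemma:-2} is available) and the remark that a disconnection of the support makes the cross term $(Y_1\cdot Y_2)$ vanish, together with checking that the two pieces $Y_1,Y_2$ are genuinely nonzero so that Lemma \ref{lemma:-2} applies to each of them separately.
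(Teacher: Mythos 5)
Your argument is correct and coincides with the paper's own proof: the paper likewise reduces to a positive divisor via Lemma \ref{lem-suppy}, splits a disconnected support as $Y=Y_1+Y_2$ with $(Y_1\cdot Y_2)=0$, and derives $(Y\cdot Y)\le -4$ from Lemma \ref{lemma:-2}, contradicting $(Y\cdot Y)\in\{-2,-3\}$. Your version just spells out the reduction and the vanishing of the cross term a bit more explicitly.
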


\begin{proof} It suffices to show that the subgraph of $\Gamma$ given by ${\rm Supp}(Y)$  is a
connected tree for a positive divisor $Y$.
Suppose that ${\rm Supp}(Y)$ is not connected.
Then we can write $Y=Y_{1}+Y_{2}$ such that $Y_{1}$ and $Y_{2}$ are
non-zero positive cycles and
$(Y_{1}\cdot Y_{2}) =0$. However, $(Y\cdot Y)=(Y_{1}\cdot Y_{1})+(Y_{2}\cdot Y_{2}) \leq -4$, which
is a contradiction. 
\end{proof}

\begin{prop}\label{prop-a0}
Let $Y\in R(\Gamma )$. {    We have $(Y\cdot Y)=-2$ if and only if $a_0=0$. 
We also have $(Y\cdot Y)=-3$ if and only if $a_0=\pm 1$. }  
\end{prop}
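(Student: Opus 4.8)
The plan is to reduce the whole statement to a single numerical bound coming from the arithmetic genus inequality, plus one trivial parity remark; the divisibility clause in Definition~\ref{def-tripleroot} will not even be needed.

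First I would normalize. By Lemma~\ref{lem-suppy}, any $Y\in R(\Gamma)$ is a positive or a negative divisor, and both $(Y\cdot Y)$ and the two conditions ``$a_0=0$'' and ``$a_0=\pm1$'' are unchanged under $Y\mapsto -Y$; so it suffices to treat a positive $Y=\sum a_iE_i$, in which case $a_0\ge 0$. The crux of the argument is then to feed such a $Y$ into property~$(1)$. Since $\Gamma$ is a rational triple tree, its unique central vertex $E_0$ has weight $w_0=3$ and every other vertex has weight $2$, so $\sum_i a_i(w_i-2)=a_0$, and $p_a(Y)\le 0$ rewrites as
\[
(Y\cdot Y)\le -2-a_0 .
\]
This is the single inequality that drives everything.

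Next I would record the parity remark: if $a_0=0$ then $\mathrm{Supp}(Y)\subseteq\Gamma\setminus\{E_0\}$, every vertex of which has weight $2$, hence $(Y\cdot Y)=-2\sum_i a_i^2+2\sum_{i\sim j}a_ia_j$ is even (the $E_0$-terms all drop out, so one does not even need connectedness of the support here, though Lemma~\ref{support} would also do). Now combine, using that $Y\in R(\Gamma)$ forces $(Y\cdot Y)\in\{-2,-3\}$. If $(Y\cdot Y)=-2$, the displayed bound gives $a_0\le 0$, hence $a_0=0$; conversely if $a_0=0$ the parity remark forces the even value $(Y\cdot Y)=-2$. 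If $(Y\cdot Y)=-3$, the displayed bound gives $a_0\le 1$, and $a_0=0$ is excluded by the parity remark, so $a_0=1$; conversely, if $a_0=1\ne 0$ then by the equivalence just proved $(Y\cdot Y)\ne -2$, so $(Y\cdot Y)=-3$. Finally, undoing the reduction to positive divisors turns ``$a_0=1$'' into ``$a_0=\pm1$''.

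The only real obstacle is spotting that the genus formula specializes, for triple trees, to the clean estimate $(Y\cdot Y)\le -2-a_0$ — this uses precisely that exactly one vertex of $\Gamma$ has weight different from $2$, with that weight equal to $3$. Once this is in hand, the remaining work is the short parity/case analysis above, and Lemmas~\ref{lemma:-2} and~\ref{support} are essentially subsumed (the bound already yields $(Y\cdot Y)\le -2$ for positive $Y$ with $a_0\ge 0$).
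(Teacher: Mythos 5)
Your proof is correct and follows essentially the same route as the paper, whose one-line proof also rests on $p_a(Y)\le 0$ together with the fact that every vertex other than $E_0$ has weight $2$; you have simply made explicit the resulting bound $(Y\cdot Y)\le -2-a_0$ and the parity observation that rules out $(Y\cdot Y)=-3$ when $a_0=0$. Nothing further is needed.
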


{    
\begin{proof} 
It suffices to prove the statement for a positive root $Y \in R(\Gamma)$.  
Note that $p(Y) = 1+ ((Y\cdot Y)+a_0)/2 \le 0$. 
We have $(Y\cdot Y) \le -2-a_0$. 
Since $(Y\cdot Y)= -2$ or $-3$, $a_0 = 0$ or $1$. 
If $(Y\cdot Y)=-2$, then $a_0=0$. 
Conversely, if $a_0 = 0$, then 
$Y$ is supported on a subtree of $\Gamma $ which is a Dynkin diagram 
by Lemma \ref{support}. 
This implies $(Y\cdot Y)=-2$. 
Hence we see that $(Y\cdot Y)=-2$ if and only if $a_0=0$. 
By taking the contraposition, we also see that 
$(Y\cdot Y)=-3$ if and only if $a_0=1$. 
\end{proof}
}

%
%

\begin{defn}
Let $\Gamma$ be an RTP-tree. We will call central vertex its vertex with weight $3$. 
Let $\Gamma_0$ denote the tree obtained from $\Gamma$ by changing the weight of the central vertex of $\Gamma$ into the weight $2$.
We say that $\Gamma_0$ is the {\it underlying diagram} of $\Gamma$. The underlying diagram $\Gamma_0$ is {\it classical} if it is of type $A, D$ or $E$.
If $\Gamma_0$ of a given RTP-tree  $\Gamma$ is classical then we call $\Gamma$ a triple Dynkin diagram.
\end{defn}

 We denote the simple roots of $R(\Gamma)$ and $R(\Gamma_0)$ by the same notation $E_i$, while $E_0$ denotes the simple root corresponding to the central vertex. 
Accordingly, $a_0$ denotes the coefficient of the central vertex in an element $Y$ in $R(\Gamma)$ (and in  $R(\Gamma_0)$).
By this notation, $R(\Gamma)$ and $R(\Gamma_0)$ can be embedded in $\oplus {\mathbb Z}E_i$.

\begin{prop}
Let $\Gamma$ be a triple Dynkin diagram. Then $R(\Gamma) \subseteq R(\Gamma_0)$.
\end{prop}

\begin{proof}
For each $Y \in \sum a_i E_i \in \oplus {\mathbb Z}E_i$,
we have $(Y\cdot Y)=(Y\cdot Y)_{\Gamma_0}-a_0^2$, where
$(Y\cdot Y)_{\Gamma_0}$ denotes the self-intersection of $Y$
with respect to $R(\Gamma_0)$.
If $(Y\cdot Y)=-2$, then $(Y\cdot Y)_{\Gamma_0} \ge -2$, and hence
$(Y\cdot Y)_{\Gamma_0} = -2$ and $a_0=0$.
If $(Y\cdot Y)=-3$, then $(Y\cdot Y)_{\Gamma_0} \ge -3$.
Because $(Y\cdot Y)_{\Gamma_0}$ is even, $(Y\cdot Y)_{\Gamma_0} = -2$
and $a_0 = \pm 1$.
Therefore $R(\Gamma) \subseteq R(\Gamma_0)$. \end{proof}


\begin{corollary}
Let $\Gamma$ be a triple Dynkin diagram. Let $Z(\Gamma_0)=\sum a_i E_i$ be the highest root  of $R(\Gamma_0)$.
If the coefficient $a_0 = 1$, then we have $R(\Gamma) = R(\Gamma_0)$. 
\end{corollary}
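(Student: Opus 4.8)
The inclusion $R(\Gamma)\subseteq R(\Gamma_0)$ is already the content of the preceding Proposition, so the plan is to establish the reverse inclusion $R(\Gamma_0)\subseteq R(\Gamma)$. First I would reduce to positive roots: the defining conditions for both $R(\Gamma)$ and $R(\Gamma_0)$ are invariant under $Y\mapsto -Y$ (the self-intersection is unchanged and $3((-Y)\cdot Y')/((-Y)\cdot(-Y))=-3(Y\cdot Y')/(Y\cdot Y)$ is still an integer), and $\mathcal{G}$ is likewise stable under this involution; since $R(\Gamma_0)$ is a root system, every element is $\pm$ a positive root, so it is enough to show that each positive root $Y=\sum b_iE_i$ of $R(\Gamma_0)$ lies in $R(\Gamma)$.

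The crucial input is the control of the coefficient $b_0$. Because $\Gamma_0$ is classical, $R(\Gamma_0)$ is, via the identification recalled after Definition \ref{def-tripleroot} (cf. \cite{pinkham},\cite{M}), the $ADE$ root system attached to $\Gamma_0$, and $Z(\Gamma_0)=\sum a_iE_i$ is its highest root. A standard fact about $ADE$ root systems (see \cite{bourbaki}) is that the highest root dominates every positive root coordinatewise, i.e. $b_i\le a_i$ for all $i$. Applying this at the triple vertex and using the hypothesis $a_0=1$ gives $b_0\le 1$, hence $b_0\in\{0,1\}$.

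Next I would compute the self-intersection in $\Gamma$ using the relation $(Y\cdot Y)=(Y\cdot Y)_{\Gamma_0}-a_0^2$ established in the proof of the preceding Proposition (here with the coefficient $b_0$ in place of $a_0$): since $(Y\cdot Y)_{\Gamma_0}=-2$ we obtain $(Y\cdot Y)_\Gamma=-2-b_0^2$, which equals $-2$ when $b_0=0$ and $-3$ when $b_0=1$. In either case $Y$ is a positive divisor supported on $E$ with $(Y\cdot Y)_\Gamma\in\{-2,-3\}$. The integrality clause in the definition of $R(\Gamma)$ is automatic: whenever $(Y\cdot Y)_\Gamma=-3$ one has $3(Y\cdot Y')/(Y\cdot Y)=-(Y\cdot Y')\in\Z$ because intersection numbers of integral divisors are integers. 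Therefore $Y\in R(\Gamma)$, which yields $R(\Gamma_0)\subseteq R(\Gamma)$ and, combined with the preceding Proposition, $R(\Gamma)=R(\Gamma_0)$.

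The only non-formal ingredient is the domination of all positive roots by the highest root in an $ADE$ system; once that is invoked, the argument is pure bookkeeping with the formula $(\,\cdot\,)_\Gamma=(\,\cdot\,)_{\Gamma_0}-a_0^2$. I would expect the main subtlety to be making sure the identification of $R(\Gamma_0)$ with the classical $ADE$ root system is cleanly in force so that "highest root" is unambiguous and the coordinatewise bound applies.
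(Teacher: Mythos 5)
Your proposal is correct and follows essentially the same route as the paper: reduce to positive roots, use the coordinatewise domination $Y\le Z(\Gamma_0)$ to force the coefficient of $E_0$ to be $0$ or $1$, and then apply the identity $(Y\cdot Y)=(Y\cdot Y)_{\Gamma_0}-a_0^2$ to conclude $(Y\cdot Y)\in\{-2,-3\}$, hence $Y\in R(\Gamma)$. Your explicit checks of the sign-involution reduction and the integrality clause are harmless elaborations of steps the paper leaves implicit.
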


\begin{proof}
We only need to show that
each positive root $Y$ of $R(\Gamma_0)$ is contained in $R(\Gamma)$.
Since $Y \le Z(\Gamma_0)$, the coefficient $a_0$ of $Y$ is
less than $1$. Then
$(Y\cdot Y)=(Y\cdot Y)_{\Gamma_0}-a_0^2 \ge -2-1 = -3$.
On the other hand, $(Y\cdot Y) \le -2$.
Thus we have proved that $Y \in R(\Gamma)$.
\end{proof}

%
%

 We introduce examples of triple Dynkin diagrams as follows. Consider a tree $\Gamma$ of type $A, D$ or $E$ and denote by $\Gamma_i$ the triple
Dynkin diagram obtained by replacing $E_i$ by a vertex of weight $3$. For example, $A_{n,2}$ is the diagram obtained from $A_n$  (see Figure \ref{fig-An}) by replacing $E_2$ by a vertex of weight $3$.

\begin{example}\label{example:An} Consider the Dynkin diagram of type $A_n$ in Figure \ref{fig-An}. It is easy to see that $R(A_n)=R(A_{n,i})$ for each $i$.
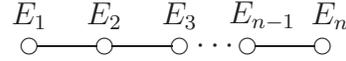
\begin{figure}[h!]
\setlength{\unitlength}{1mm}
\begin{picture}(125,10)(-10,30)
\put(40,34){\makebox(0,0){$E_1$}}
\put(40,30){\circle{2}}
\put(50,30){\circle{2}}
\put(50,34){\makebox(0,0){$E_2$}}
\put(41,30){\line(1,0){8}}
\put(51,30){\line(1,0){8}}
\put(60,30){\circle{2}}
\put(60,34){\makebox(0,0){$E_3$}}
\put(65,30){\makebox(0,0){$\cdots$}}
\put(69,30){\circle{2}}
\put(71,34){\makebox(0,0){$ E_{n-1}$}}
\put(70,30){\line(1,0){8}}
\put(79,30){\circle{2}}
\put(80,34){\makebox(0,0){$E_n$}}
\end{picture}
\caption{Dynkin diagram of type $A_n$.}
\label{fig-An}
\end{figure}
\end{example}

\begin{example}\label{example:Dn} Triple Dynkin diagrams of type $D_n$, as shown in Figure \ref{fig-Dn}, yield the following relations between the root systems:

\[R(D_{n,n-2})\subseteq R(D_{n, n-3})\subseteq \cdots \subseteq R(D_{n,1}) = R(D_{n,n})=R(D_{n,{n-1}})=R(D_n).\]

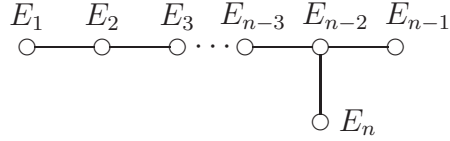
\begin{figure}[!h]
\setlength{\unitlength}{1mm}
\begin{picture}(125,20)(0,18)
\put(40,34){\makebox(0,0){$E_1$}}
\put(40,30){\circle{2}}
\put(50,30){\circle{2}}
\put(50,34){\makebox(0,0){$E_2$}}
\put(41,30){\line(1,0){8}}
\put(51,30){\line(1,0){8}}
\put(60,30){\circle{2}}
\put(60,34){\makebox(0,0){$ E_3$}}
\put(65,30){\makebox(0,0){$\cdots$}}
\put(69,30){\circle{2}}
\put(70,34){\makebox(0,0){$E_{n-3}$}}
\put(70,30){\line(1,0){8}}
\put(79,30){\circle{2}}
\put(81,34){\makebox(0,0){$E_{n-2}$}}
\put(80,30){\line(1,0){8}}
\put(89,30){\circle{2}}
\put(92,34){\makebox(0,0){$ E_{n-1}$}}
\put(79,29){\line(0,-1){8}}
\put(79,20){\circle{2}}
\put(84,20){\makebox(0,0){$E_{n}$}}
\end{picture}
\caption{Dynkin diagram of type $D_n$.}
\label{fig-Dn}
\end{figure}
\end{example}

 The last equalities on the right hand side are clear as their coefficients in the Artin cycle (the highest root) of $D_n$ is $1$, so their weights don't change the roots obtained for the Dynkin diagram of type $D_n$. If we put $w_{i_0}=3$ for $2\leq i_0\leq n-2${   , then} the coefficient $a_{i_0}$ becomes $1$ in the highest root. In this case, the {    Artin} cycle $Z(D_{n,i_0-1})$  is smaller than the {   Artin} cycle $Z(D_{n,i_0})$. Then we obtain a smaller set of roots $R(D_{n,i_0})$ which is included in the set of roots of $R(D_{n,i_0-1})$.
\begin{example}\label{example:E6}
For the Dynkin diagram $E_6$ pictured in Figure \ref{fig-E6}, after a smimilar discusion on the place of the vertex with weight $3$, we obtain:
{    
\[\xymatrixrowsep{10pt}\xymatrix{  &R(E_{6,2}) \ar[r] & R(E_{6,6}) \ar[r] & R(E_{6,1})=R(E_{6,5})=R(E_6)\\
  R(E_{6,3}) \ar[ur] \ar[r] & R(E_{6,4}) \ar[ur] & &}.\] } 

\begin{figure}[h!]
\setlength{\unitlength}{1mm}
\begin{picture}(125,15)(-10,18)
\put(40,34){\makebox(0,0){$E_1$}}
\put(40,30){\circle{2}}
\put(50,30){\circle{2}}
\put(50,34){\makebox(0,0){$E_2$}}
\put(41,30){\line(1,0){8}}
\put(51,30){\line(1,0){8}}
\put(60,30){\circle{2}}
\put(60,34){\makebox(0,0){$E_3$}}
\put(61,30){\line(1,0){8}}
\put(70,30){\circle{2}}
\put(70,34){\makebox(0,0){$E_4$}}
\put(71,30){\line(1,0){8}}
\put(80,30){\circle{2}}
\put(80,34){\makebox(0,0){$E_5$}}
\put(60,29){\line(0,-1){8}}
\put(60,20){\circle{2}}
\put(64,20){\makebox(0,0){$E_6$}}
\end{picture}
\caption{Dynkin diagram of type $E_6$.}
\label{fig-E6}
\end{figure}
\end{example}

\begin{example}\label{example:E7} Consider the Dynkin diagram $E_7$ in Figure \ref{fig-E7}.

\begin{figure}[h!]
\setlength{\unitlength}{1mm}
\begin{picture}(125,25)(-5,18)
\put(40,34){\makebox(0,0){$E_1$}}
\put(40,30){\circle{2}}
\put(50,30){\circle{2}}
\put(50,34){\makebox(0,0){$E_2$}}
\put(41,30){\line(1,0){8}}
\put(51,30){\line(1,0){8}}
\put(60,30){\circle{2}}
\put(60,34){\makebox(0,0){$ E_3$}}
\put(61,30){\line(1,0){8}}
\put(70,30){\circle{2}}
\put(70,34){\makebox(0,0){$E_4$}}
\put(71,30){\line(1,0){8}}
\put(80,30){\circle{2}}
\put(80,34){\makebox(0,0){$ E_5$}}
\put(81,30){\line(1,0){8}}
\put(90,30){\circle{2}}
\put(90,34){\makebox(0,0){$E_6$}}
\put(60,29){\line(0,-1){8}}
\put(60,20){\circle{2}}
\put(64,20){\makebox(0,0){$E_7$}}
\end{picture}
\caption{Dynkin diagram of type $E_7$.}
\label{fig-E7}
\end{figure}

 We have the following inclusion of the root systems. {    
\[\xymatrixrowsep{10pt}\xymatrix{ &R(E_{7,2}) \ar[r] &R(E_{7,7}) \ar[r] & R(E_{7,1}) \ar[r] &R(E_{7,6})=R(E_7)\\
R(E_{7,3}) \ar[ur] \ar[r]  & R(E_{7,4}) \ar[ur] \ar[r] & R(E_{7,5}) \ar[ur] & &}.\] } 
\end{example}

\begin{example}\label{example:E8} Consider the Dynkin diagram $E_8$ shown in Figure \ref{fig-E8}. 
\begin{figure}[h!]
\setlength{\unitlength}{1mm}
\begin{picture}(130,20)(0,18)
\put(40,34){\makebox(0,0){$E_1$}}
\put(40,30){\circle{2}}
\put(50,30){\circle{2}}
\put(50,34){\makebox(0,0){$ E_2$}}
\put(41,30){\line(1,0){8}}
\put(51,30){\line(1,0){8}}
\put(60,30){\circle{2}}
\put(60,34){\makebox(0,0){$ E_3$}}
\put(61,30){\line(1,0){8}}
\put(70,30){\circle{2}}
\put(70,34){\makebox(0,0){$E_4$}}
\put(71,30){\line(1,0){8}}
\put(80,30){\circle{2}}
\put(80,34){\makebox(0,0){$E_5$}}
\put(81,30){\line(1,0){8}}
\put(90,30){\circle{2}}
\put(90,34){\makebox(0,0){$E_6$}}
\put(91,30){\line(1,0){8}}
\put(100,30){\circle{2}}
\put(100,34){\makebox(0,0){$ E_7$}}
\put(60,29){\line(0,-1){8}}
\put(60,20){\circle{2}}
\put(64,20){\makebox(0,0){$E_8$}}
\end{picture}
\caption{Dynkin diagram of type $E_8$.}
\label{fig-E8}
\end{figure}

 Then we have
\[\xymatrixrowsep{10pt}\xymatrix{ & & & R(E_{8,6}) \ar[dr] & & \\
 R(E_{8,3}) \ar[r] \ar[dr] & R(E_{8,4}) \ar[dr] \ar[r] & R(E_{8,5}) \ar[r] \ar[ur] & R(E_{8,1}) \ar[r] & R(E_{8,7}) \ar[r] & R(E_8)\\
 & R(E_{8,2}) \ar[r] & R(E_{8,8}) \ar[ur] & & &}.\]
 \end{example}

  Now we will compute the number of roots in each triple root system corresponding to the rational triple trees classified by Artin (\cite{artin}) (see also Figures \ref{fig-Anmk}-\ref{fig-E82} where $\blacksquare$ is the vertex with weight $3$ in each tree). Here we refer to them using the notation $A_{n,m,k}$, $B_{m,n}$, $C_{m,n}$, $D_{n,5}$, $F_n$, $H_n$ following Tjurina's work (\cite{tjurina}) and $E_{7,1}$, $E_{8,1}$, $E_{8,2}$ for the rest which were unlabelled in either two articles. We will denote the number of elements in the set $R(\Gamma)$ by $|R(\Gamma)|$.

\begin{prop}\label{prop:anmk}
The number of roots in $R(A_{n,m,k})$ is equal to
\[n^2+m^2+k^2+n+m+k+2(n+1)(m+1)(k+1)\]
where the corresponding graph $A_{n,m,k}$ is a tree with $n+m+k+1$ vertices as shown in Figure \ref{fig-Anmk}.

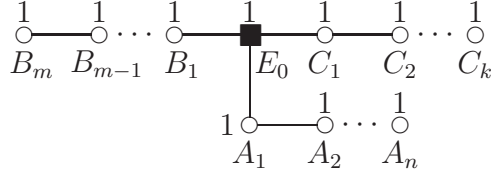
\begin{figure}[h!]
\setlength{\unitlength}{1mm}
\begin{picture}(120,25)(-10,5)
\put(31,18){\makebox(0,0){$ B_m$}}
\put(30,22){\circle{2}}
\put(30,25){\makebox(0,0){$1$}}
\put(40,22){\circle{2}}
\put(40,25){\makebox(0,0){$ 1$}}
\put(41,18){\makebox(0,0){$ B_{m-1}$}}
\put(31,22){\line(1,0){8}}
\put(45,22){\makebox(0,0){$\cdots$}}
\put(50,22){\circle{2}}
\put(50,25){\makebox(0,0){$ 1$}}
\put(51,18){\makebox(0,0){$ B_1$}}
\put(51,22){\line(1,0){8}}
\put(58.5,20.5){$\blacksquare$}
\put(60,25){\makebox(0,0){$ 1$}}
\put(63,18){\makebox(0,0){$ E_0$}}
\put(61,22){\line(1,0){8}}
\put(70,22){\circle{2}}
\put(70,18){\makebox(0,0){$ C_1$}}
\put(70,25){\makebox(0,0){$ 1$}}
\put(71,22){\line(1,0){8}}
\put(80,22){\circle{2}}
\put(80,18){\makebox(0,0){$ C_2$}}
\put(80,25){\makebox(0,0){$ 1$}}
\put(85,22){\makebox(0,0){$\cdots$}}
\put(90,22){\circle{2}}
\put(90,18){\makebox(0,0){$ C_k$}}
\put(90,25){\makebox(0,0){$ 1$}}
\put(60,10){\circle{2}}
\put(57,10){\makebox(0,0){$ 1$}}
\put(60,21){\line(0,-1){10}}
\put(60,6){\makebox(0,0){$ A_1$}}
\put(61,10){\line(1,0){8}}
\put(70,10){\circle{2}}
\put(70,6){\makebox(0,0){$ A_2$}}
\put(70,13){\makebox(0,0){$ 1$}}
\put(75,10){\makebox(0,0){$\cdots$}}
\put(80,10){\circle{2}}
\put(80,6){\makebox(0,0){$ A_n$}}
\put(80,13){\makebox(0,0){$ 1$}}
\end{picture}
\caption{Triple diagram of type $A_{n,m,k}$.}
\label{fig-Anmk}
\end{figure}
\end{prop}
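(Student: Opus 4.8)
The plan is to group the roots by sign and by the coefficient $a_0$ of the triple vertex $E_0$. Since $(-Y)\cdot(-Y)=Y\cdot Y$ and, by Lemma~\ref{lem-suppy}, every root is a positive or a negative divisor, we have $|R(A_{n,m,k})|=2\,|R^{+}|$, where $R^{+}$ denotes the set of positive roots. By Proposition~\ref{prop-a0}, $R^{+}$ is the disjoint union of the positive roots with $a_0=0$ (those with $Y\cdot Y=-2$) and the positive roots with $a_0=1$ (those with $Y\cdot Y=-3$), so it suffices to count these two families.

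For the family $a_0=0$: by Lemma~\ref{support} the support of such a root is connected, and since it avoids $E_0$ it lies inside one of the three branches $A_1-\cdots-A_n$, $B_1-\cdots-B_m$, $C_1-\cdots-C_k$, each a chain of $(-2)$-curves of type $A$. For a chain $F_1-\cdots-F_p$ of $(-2)$-curves and $D=\sum_s d_sF_s$ one has $(D\cdot D)=-\sum_{s=0}^{p}(d_s-d_{s+1})^2$ with the convention $d_0=d_{p+1}=0$; requiring $(D\cdot D)=-2$ forces exactly two of these differences to equal $\pm1$, and for a non-negative sequence with $d_0=d_{p+1}=0$ this means $D=F_{p'}+\cdots+F_{q'}$ for some $1\le p'\le q'\le p$. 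Hence this branch contributes $\binom{p+1}{2}$ positive roots, and the total over the three branches is $\tfrac12\bigl(n(n+1)+m(m+1)+k(k+1)\bigr)$.

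For the family $a_0=1$ — the heart of the argument — write $Y=E_0+\sum a_lA_l+\sum b_iB_i+\sum c_jC_j$ with non-negative integer coefficients. A direct computation of the self-intersection (most cleanly via $(Y\cdot Y)_\Gamma=(Y\cdot Y)_{\Gamma_0}-a_0^{2}$, where $\Gamma_0$ lowers the weight of $E_0$ to $2$, followed by completing the square along each of the three chains) gives
$$(Y\cdot Y)=-\bigl(S_A+S_B+S_C\bigr),\qquad S_A=\sum_{l=0}^{n}(a_l-a_{l+1})^{2}\ \ \text{with}\ \ a_0=1,\ a_{n+1}=0,$$
and $S_B$, $S_C$ defined analogously with $b_0=c_0=1$ and $b_{m+1}=c_{k+1}=0$. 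Because the net change of the sequence $1=a_0,a_1,\dots,a_n,a_{n+1}=0$ is nonzero, $S_A\ge 1$, and likewise $S_B,S_C\ge 1$; hence $(Y\cdot Y)=-3$ forces $S_A=S_B=S_C=1$. The equation $S_A=1$ says exactly one difference $a_l-a_{l+1}$ is $\pm1$ and the rest vanish, so for a non-negative sequence running from $1$ to $0$ we must have $a=(1,\dots,1,0,\dots,0)$, i.e. $Y$ restricted to the $A$-branch is $A_1+\cdots+A_r$ for a unique $r\in\{0,1,\dots,n\}$, and similarly for the other two branches; conversely each such divisor lies in $R(\Gamma)$ (the divisibility condition of Definition~\ref{def-tripleroot} is automatic once $Y\cdot Y=-3$, since intersection numbers are integers). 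Thus this family has exactly $(n+1)(m+1)(k+1)$ members.

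Adding the two counts, $|R^{+}|=\tfrac12\bigl(n(n+1)+m(m+1)+k(k+1)\bigr)+(n+1)(m+1)(k+1)$, and doubling yields $|R(A_{n,m,k})|=n^{2}+m^{2}+k^{2}+n+m+k+2(n+1)(m+1)(k+1)$, as claimed. Everything but the self-intersection identity for the $a_0=1$ case is bookkeeping; I expect that identity to be the one delicate point, since the conventions at the central vertex (weight $3$, valency up to $3$) must be handled carefully, together with the degenerate cases where one or more of $n,m,k$ vanishes — for which the boundary term $a_{n+1}=0$ (etc.) in $S_A$ accounts automatically.
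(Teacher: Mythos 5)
Your argument is correct and follows essentially the same route as the paper's proof: both count positive roots by splitting into the case $a_0=0$ (support confined to one branch, contributing $\tfrac12\bigl(n(n+1)+m(m+1)+k(k+1)\bigr)$) and the case $a_0=1$ (each branch forced to be $A_1+\cdots+A_r$ with unit coefficients, contributing $(n+1)(m+1)(k+1)$), and then double. The only difference is the technical device in the $a_0=1$ step: where the paper invokes the inductive inequality of Lemma \ref{lemma:inequality} to force $\alpha_1=\beta_1=\gamma_1=1$ and $(A\cdot A)=(B\cdot B)=(C\cdot C)=-2$, you use the telescoping sum-of-squares identity $(Y\cdot Y)=-(S_A+S_B+S_C)$, which is a correct and slightly more direct way to reach the same conclusion.
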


 We need the following lemma to prove Proposition \ref{prop:anmk}.

\begin{lemma}\label{lemma:inequality}
Let us consider the Dynkin diagram $A_n$ (Figure \ref{fig-An}). For any positive cycle $Y=\sum_{i=1}^{n} a_i E_i$, we have
$(Y\cdot Y) \le -a_n^2-1$.
In particular, $(Y\cdot Y) \le -2a_n$.
\end{lemma}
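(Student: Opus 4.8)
The plan is to compute $(Y\cdot Y)$ explicitly for the chain $A_n$ and read off the inequality. For the $A_n$ graph with vertices $E_1,\dots,E_n$ in a line, each $E_i^2=-2$ and $E_i\cdot E_{i+1}=1$, so for $Y=\sum_{i=1}^n a_iE_i$ one has
\[
(Y\cdot Y)=-2\sum_{i=1}^n a_i^2+2\sum_{i=1}^{n-1}a_ia_{i+1}=-a_1^2-\sum_{i=1}^{n-1}(a_i-a_{i+1})^2-a_n^2.
\]
This telescoping identity is the key computation; I would verify it by expanding the right-hand side and matching coefficients. From it the first claim is immediate: since $a_1^2\ge 0$ and $\sum_{i=1}^{n-1}(a_i-a_{i+1})^2\ge 0$, we get $(Y\cdot Y)\le -a_n^2$, but we can do slightly better. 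If $Y$ is a nonzero positive cycle, then not all $a_i$ vanish; I would argue that either $a_1\ge 1$, or $a_1=0$ and then some consecutive difference $|a_i-a_{i+1}|\ge 1$ (take the smallest index $i$ with $a_{i+1}\ne 0$, noting $a_i=0$ there if the cycle starts later; more carefully, if $a_n\ne 0$ the telescoping sum together with $a_n^2$ already accounts for things, and if $a_n=0$ one still has some nonzero entry forcing either $a_1\ge1$ or a jump of size $\ge1$). In every case one of the nonnegative terms $a_1^2$ or $\sum(a_i-a_{i+1})^2$ is at least $1$, giving
\[
(Y\cdot Y)\le -a_n^2-1.
\]

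For the second, "in particular" assertion, $(Y\cdot Y)\le -2a_n$, I would split into cases according to the value of $a_n$. If $a_n=0$ the bound $(Y\cdot Y)\le -a_n^2-1=-1\le 0=-2a_n$ holds trivially (using that $(Y\cdot Y)$ is negative for a nonzero positive cycle, or even just $\le -1$). If $a_n\ge 1$, then from $(Y\cdot Y)\le -a_n^2-1$ it suffices to check $a_n^2+1\ge 2a_n$, i.e. $(a_n-1)^2\ge 0$, which is always true. Hence $(Y\cdot Y)\le -2a_n$ in all cases.

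The only mild subtlety — and the step I expect to need the most care — is the passage from $-a_n^2$ to $-a_n^2-1$: one must make sure that the "extra $1$" is genuinely available, i.e. that a nonzero positive cycle cannot have $a_1=0$ and all consecutive differences zero simultaneously. But $a_1=0$ together with $a_i=a_{i+1}$ for all $i$ forces $Y=0$, contradicting that $Y$ is a (nonzero) positive cycle. So the extra $1$ is always there, and the proof is complete. Everything else is the routine telescoping identity and the elementary inequality $(a_n-1)^2\ge 0$.
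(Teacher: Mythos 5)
Your proof is correct, but it is packaged differently from the paper's. The paper argues by induction on $n$: the base case uses $a_1\ge 1$ to get $-2a_1^2\le -a_1^2-1$, and the inductive step completes a square, $(Y\cdot Y)\le -(a_{n-1}-a_n)^2-a_n^2-1$, so the ``$-1$'' is carried along from the base case; the second claim is dispatched in one line via $-x^2-1\le -2x$ for all $x$. You instead write down the closed-form sum-of-squares identity
\[
(Y\cdot Y)=-a_1^2-\sum_{i=1}^{n-1}(a_i-a_{i+1})^2-a_n^2,
\]
which is exactly what the paper's induction produces when unrolled, and then extract the extra $-1$ from integrality: a nonzero positive cycle with integer coefficients cannot have $a_1=0$ and all consecutive differences zero, so one of the nonnegative square terms is at least $1$. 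Both arguments are sound; yours makes the structure of the $A_n$ intersection form completely explicit and treats degenerate supports (e.g.\ $Y=a_nE_n$, where the paper's inductive step would be applied to $Y'=0$, a case the paper leaves implicit) uniformly, at the cost of having to invoke integrality of the coefficients explicitly, while the paper's induction is shorter to state. Your case split for the ``in particular'' claim is unnecessary --- the single inequality $-a_n^2-1\le -2a_n$, i.e.\ $(a_n-1)^2\ge 0$, covers $a_n=0$ as well --- but it is harmless.
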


\begin{proof} We prove the statement by induction on $n$. If $n=1$, then $(Y\cdot Y)=-2a_1^2 \le -a_1^2-1$.

 Assume that $n \ge 2$. Suppose that the statement is true in the case $n-1$. Let $Y'=\sum_{i=1}^{n-1} a_i E_i$ so that $Y=Y'+a_n E_n$. By the hypothesis, $(Y'\cdot Y') \le -a_{n-1}^2-1$. Hence 
\begin{eqnarray*}
(Y\cdot Y)&=&(Y'\cdot Y')+2(Y'\cdot a_{n}E_{n})-2a_{n}^2 \\
&\leq &
-a_{n-1}^2-1+2a_{n-1}a_{n}-2a_{n}^2 \\ 
&= &-(a_{n-1}-a_{n})^2-a_{n}^2-1 \\
& \le &  -a_{n}^2-1. \end{eqnarray*}

 On the other hand, the second claim follows from the fact that $-x^2-1 \le -2x$ for any $x$. This completes the proof.
\end{proof}

\begin{proof}[Proof of Proposition \ref{prop:anmk}] Let $Y$ be a positive root in $R(A_{n, m, k})$. We can write
$Y=A+B+C+a_0 E_0$, where $A=\alpha_{1} A_{1} + \cdots +\alpha_{n}A_{n}$, $B= \beta_{1} B_{1} + \cdots + \beta_{m}B_{m}$,
and $C=\gamma_{1} C_{1} + \cdots + \gamma_{k}C_{k}$. Then $a_0=0$ or $1$. If $a_0=0$, then $Y=A$, $B$, or $C$ since ${\rm Supp}(Y)$ is connected. In this case the number of positive roots is equal to
\begin{equation}\label{eq-anmk}\frac{1}{2}\left (|R(A_{n})|+|R(A_m)|+ |R(A_k)|\right) = \frac{1}{2}\left(n^2+n+m^2+m+k^2+k\right).\end{equation}

 Now let us assume that $a_0=1$. Note that 
\[(Y\cdot Y)=(A\cdot A)+2\alpha_1+ (B\cdot B)+2\beta_1+(C\cdot C)+2\gamma_1 -3\]
By Lemma \ref{lemma:inequality}, $(A\cdot A) +2\alpha_1 \le 0$, $(B\cdot B)+ 2\beta_1 \le 0$, and $(C\cdot C)+2\gamma_1 \le 0$. 
Since $Y$ is a root of $A_{n, m, k}$, we must have $(A\cdot A) +2\alpha_1 = 0$, $(B\cdot B) + 2\beta_1 =
0$ and $(C\cdot C)+2\gamma_1 = 0$. 
{    If $A$ is a positive divisor, then} $-2\alpha_1 = (A\cdot A) \le -\alpha_1^2-1$,
and hence $\alpha_1=1$ {    and $(A\cdot A)=-2$. 
Thereby $A \in R(A_{n}) \cup \{ 0 \}$.} Similarly, {    we have 
$B \in R(A_{m}) \cup \{ 0 \}$ and $C \in R(A_{k}) \cup \{ 0 \}$. }
%
{   We} can easily verify that the number of positive roots with $a_0=1$ is equal to $(n+1)(m+1)(k+1)$ since ${\rm Supp} (Y)$ is connected.

 The sum in (\ref{eq-anmk}) and  $(n+1)(m+1)(k+1)$ add up to the number of positive roots. Clearly, we get the same number of negative roots. This concludes the proof. \end{proof}

\begin{prop}\label{prop:B_mn}
The number of roots in $R(B_{m,n})$ is 
\begin{equation}\label{eq-RBmn} n(n+1)(m+1)+m(m+1)+n(n+1)
\end{equation} with $m,n\geq 0$.

\begin{figure}[h!]
\setlength{\unitlength}{1mm}
\begin{picture}(125,20)(-5,12)
\put(31,18){\makebox(0,0){$E_m$}}
\put(30,22){\circle{2}}
\put(40,22){\circle{2}}
\put(41,18){\makebox(0,0){$E_{m-1}$}}
\put(31,22){\line(1,0){8}}
\put(45,22){\makebox(0,0){$\cdots$}}
\put(50,22){\circle{2}}
\put(51,18){\makebox(0,0){$E_1$}}
\put(51,22){\line(1,0){8}}
\put(58.8,20.5){$\blacksquare$}
\put(60,18){\makebox(0,0){$E_0$}}
\put(61,22){\line(1,0){8}}
\put(70,22){\circle{2}}
\put(71,18){\makebox(0,0){$F_2$}}
\put(71,22){\line(1,0){8}}
\put(80,22){\circle{2}}
\put(81,18){\makebox(0,0){$ F_3$}}
\put(85,22){\makebox(0,0){$\cdots$}}
\put(90,22){\circle{2}}
\put(91,18){\makebox(0,0){$F_{n-1}$}}
\put(91,22){\line(1,0){8}}
\put(100,22){\circle{2}}
\put(101,18){\makebox(0,0){$ F_n$}}
\put(70,32){\circle{2}}
\put(70,31){\line(0,-1){8}}
\put(74,32){\makebox(0,0){$F_1$}}
\end{picture}
\caption{Triple diagram of type $B_{m,n}$.}
\label{fig-Bmn}
\end{figure}
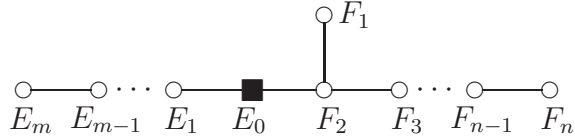
\end{prop}

\begin{proof} We consider the diagram in Figure \ref{fig-Bmn}. Let $Y\in R(B_{m,n})$ be given by $Y=\sum_{i=0}^{m} a_i E_i + \sum_{j=1}^n b_j F_j$. 
First we study the case $a_0=0$. Since ${\rm Supp}(Y)$ is connected,
\[Y \in R(A_m) := R(B_{m, n}) \cap \oplus_{i=1}^{m} {\mathbb Z}E_i\]
or  \[Y \in R(A_n) := R(B_{m, n}) \cap \oplus_{j=1}^{n} {\mathbb Z}F_j.\]
Then, the number of all roots with $a_0=0$ is  
$|R(A_m)|+|R(A_n)| = m(m+1)+n(n+1)$.

 Next assume that $a_0=1$. Put $D_1=\sum_{i=1}^{m}a_iE_i$ and
$D_2 = E_0 + \sum_{j=1}^{n}b_jF_j$. Note that
\[-3=(Y\cdot Y)=(D_1+D_2)^2=D_1^2+2a_1+D_2^2\] and that $D_1^2+2a_1 \le 0$ 
{    by Lemma \ref{lemma:inequality}. By Proposition \ref{prop-a0},  
$D_2^2 = -3$ and hence $D_1^2=-2a_1$. } 
Again by Lemma \ref{lemma:inequality}, we have $D_1^2=-2$ or $D_1=0$
because $D_1^2 \le -a_1^2-1 \le -2a_1 = D_1^2$.
Thus $D_1 \in  R(A_m)$
or $D_1=0$, and $D_2 \in
R(D_{n+1, {   n+1}}) = R(B_{m, n}) \cap {\mathbb Z} E_0 \oplus
(\oplus_{j=1}^{n} {\mathbb Z}F_j)$.
By the connectedness of ${\rm Supp}(Y)$, we have
\[ \left\{ Y \in R(B_{m, n}) \mid a_0=  1  \right\}=\left\{ {    D_1+D_2} \: 
\begin{array}{|c}
  D_1=0 \mbox{ or } D_1 \in R(A_m) \mbox{ and } a_1 > 0 \\
  D_2 = E_0 + \sum_{j=1}^{n}b_jF_j \in R(D_{n+1, {   n+1}})
\end{array} \right\}.  \]
Note that $|R(D_{n+1, {   n+1}})| = |R(D_{n+1})|$ by Example \ref{example:Dn}.
The number of elements in $\{ D_2 =  E_0 + \sum_{j=1}^{n}b_jF_j \in R(D_{n+1, {   n+1}})
\}$ is equal to the number of elements in $R(D_{n+1})_{+} \setminus \{ D \in R(D_{n+1})_{+} \mid
{\rm Supp} (D) \subset \{ F_j \mid j=1, 2, \ldots, n \} \}$, which is
$|R(D_{n+1})_{+}| - |R(A_n)_{+}|$. Here we denote by $R(\  )_{+}$
the positive roots in $R(\ )$.
It is easy to check that the number of elements in $\{ D_1 \mid D_1=0 \mbox{ or }
D_1 \in R(A_m) \mbox{ with } a_1 > 0\}$ is $m+1$.
Therefore 
\begin{eqnarray*}|\{ Y \in R(B_{m, n}) : a_0=  1  \}| &=&
(m+1)\left(|R(D_{n+1})_{+}|-|R(A_n)_{+}|\right)\\
&=& (m+1)n(n+1)/2.\end{eqnarray*}

 Similarly, the number of all roots with $a_0=-1$ is equal to
$n(n+1)(m+1)/2$. Summing up all of the numbers above, we obtain (\ref{eq-RBmn}).
\end{proof}

%
%
%
\begin{prop}
The number of roots in $R(C_{m,n})$ is $2m^2+4mn+n^2+2m+n$.

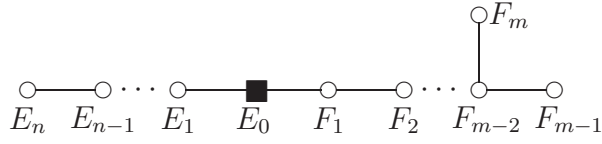
\begin{figure}[h!]
\setlength{\unitlength}{1mm}
\begin{picture}(120,20)(0,15)
\put(30,18){\makebox(0,0){$E_n$}}
\put(30,22){\circle{2}}
\put(40,22){\circle{2}}
\put(40,18){\makebox(0,0){$E_{n-1}$}}
\put(31,22){\line(1,0){8}}
\put(45,22){\makebox(0,0){$\cdots$}}
\put(50,22){\circle{2}}
\put(50,18){\makebox(0,0){$E_1$}}
\put(51,22){\line(1,0){8}}
\put(58.8,20.5){$\blacksquare$}
\put(60,18){\makebox(0,0){$E_0$}}
\put(61,22){\line(1,0){8}}
\put(70,22){\circle{2}}
\put(70,18){\makebox(0,0){$F_1$}}
\put(71,22){\line(1,0){8}}
\put(80,22){\circle{2}}
\put(85,22){\makebox(0,0){$\cdots$}}
\put(90,22){\circle{2}}
\put(80,18){\makebox(0,0){$F_2$}}
\put(91,18){\makebox(0,0){$F_{m-2}$}}
\put(91,22){\line(1,0){8}}
\put(100,22){\circle{2}}
\put(102,18){\makebox(0,0){$F_{m-1}$}}
\put(90,32){\circle{2}}
\put(90,31){\line(0,-1){8}}
\put(94,32){\makebox(0,0){$F_m$}}
\end{picture}
\caption{Triple diagram of type $C_{m,n}$.}
\label{fig-Cmn}
\end{figure}
\end{prop}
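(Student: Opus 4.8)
The plan is to run the same argument as in the proof of Proposition~\ref{prop:B_mn}, organizing $R(C_{m,n})$ by the coefficient $a_0$ of the weight-$3$ vertex $E_0$. By Proposition~\ref{prop-a0} we have $a_0\in\{0,\pm1\}$, and $Y\mapsto-Y$ matches the roots with $a_0=1$ bijectively with those with $a_0=-1$, so it suffices to count the roots with $a_0=0$ and those with $a_0=1$. Two features of Figure~\ref{fig-Cmn} drive the computation: deleting $E_0$ leaves the disjoint union of an $A_n$ diagram on $\{E_1,\dots,E_n\}$ and a $D_m$ diagram on $\{F_1,\dots,F_m\}$; and re-adjoining $E_0$ to the second piece along $F_1$ (the far end of its long arm) yields a $D_{m+1}$ diagram, so divisors of the shape $E_0+\sum b_jF_j$ are controlled by $R(D_{m+1,1})=R(D_{m+1})$, exactly as in the $B_{m,n}$ case.

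In the case $a_0=0$, Lemma~\ref{support} forces ${\rm Supp}(Y)$ to lie entirely in $\{E_1,\dots,E_n\}$ or entirely in $\{F_1,\dots,F_m\}$; since on a Dynkin diagram a divisor is a root exactly when its self-intersection is $-2$, this contributes $|R(A_n)|+|R(D_m)|=n(n+1)+2m(m-1)$ roots. In the case $a_0=1$ write $Y=D_1+D_2$ with $D_1=\sum_{i=1}^n a_iE_i$ and $D_2=E_0+\sum_{j=1}^m b_jF_j$. The only edge between the two supports is $E_1E_0$, so $(D_1\cdot D_2)=a_1$ and
$$-3=(Y\cdot Y)=D_1^2+2a_1+D_2^2 .$$
Lemma~\ref{lemma:inequality} applied to the arm $E_1,\dots,E_n$ gives $D_1^2\le-a_1^2-1\le-2a_1$, hence $D_1^2+2a_1\le0$ and $D_2^2\ge-3$; on the other hand the only weight $>2$ occurring in $D_2$ is $w_0=3$, so $p_a(D_2)=\frac12(D_2^2+1)+1\le0$ forces $D_2^2\le-3$. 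Therefore $D_2^2=-3$ and $D_1^2+2a_1=0$; since connectedness of ${\rm Supp}(Y)$ forces $a_1\ge1$ once $D_1\ne0$, the inequalities collapse to $a_1=1$, $D_1^2=-2$. Thus $D_1=\sum_{i=1}^{p}E_i$ for some $p\in\{0,1,\dots,n\}$ (with $p=0$ meaning $D_1=0$), which gives $n+1$ possibilities, while $D_2$ runs over the positive divisors $E_0+\sum b_jF_j$ with $D_2^2=-3$, i.e.\ over the positive roots of $R(D_{m+1,1})$ whose $E_0$-coefficient equals $1$, of which there are $|R(D_{m+1})_{+}|-|R(D_m)_{+}|=(m+1)m-m(m-1)=2m$. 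Each of the $(n+1)\cdot 2m$ resulting pairs gives a root, as $(Y\cdot Y)=-3$ and ${\rm Supp}(Y)$ is connected. Summing the cases yields $|R(C_{m,n})|=n(n+1)+2m(m-1)+2\cdot(n+1)\cdot2m=2m^2+4mn+n^2+2m+n$.

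The step I would watch most carefully is the bookkeeping around $D_2$: identifying $E_0\cup\{F_j\}$ with $D_{m+1}$, checking via the relation $(Y\cdot Y)=(Y\cdot Y)_{\Gamma_0}-a_0^2$ that the $-3$ divisors of this shape are precisely the positive roots of that $D_{m+1}$ with central coefficient $1$, and noting that $R(D_m)\subseteq R(D_{m+1})$ captures exactly the positive roots supported away from $E_0$. This is strictly parallel to the corresponding passage of Proposition~\ref{prop:B_mn}, with the arm $A_n$ there replaced by the fork $D_m$ here, so no new idea is needed and the only real risk is an arithmetic slip in the root counts; the degenerate small-$m$ shapes of Figure~\ref{fig-Cmn} (e.g.\ $D_3=A_3$) should be verified separately but cause no trouble.
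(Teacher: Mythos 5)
Your count is correct, and each step you take is justified by results the paper does provide (Proposition \ref{prop-a0}, Lemmas \ref{lemma:-2}, \ref{support}, \ref{lemma:inequality}, and $R(D_{m+1,1})=R(D_{m+1})$ from Example \ref{example:Dn}); the arithmetic $n(n+1)+2m(m-1)+2\cdot 2m(n+1)$ agrees with the stated formula. However, your route through the $a_0=\pm 1$ case is not the one the paper takes for $C_{m,n}$: you transplant the decomposition argument of Proposition \ref{prop:B_mn}, writing $Y=D_1+D_2$ with $D_1$ on the $A_n$ arm and $D_2=E_0+\sum b_jF_j$, forcing $a_1=1$, $D_1^2=-2$, $D_2^2=-3$ via Lemma \ref{lemma:inequality} (your use of $p_a(D_2)\le 0$ to get $D_2^2\le -3$ is a clean substitute for the parity argument implicit in the paper's $B_{m,n}$ proof), and then counting $(n+1)$ choices of $D_1$ times $2m$ choices of $D_2$ through the identification of $E_0\cup\{F_j\}$ with $D_{m+1,1}$ and $|R(D_{m+1})_+|-|R(D_m)_+|=2m$. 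The paper instead exploits that the underlying diagram of $C_{m,n}$ is the classical $D_{m+n+1}$: from $(Y\cdot Y)=(Y\cdot Y)_{D_{m+n+1}}-a_0^2$ it reads off that roots with $a_0=1$ are exactly positive roots of $D_{m+n+1}$ with $a_0=1$, and counts them in the $\varepsilon$-basis as $\{e_i\pm e_j \mid 1\le i\le n+1 < j\le m+n+1\}$, giving the same $2m(n+1)$. Your approach buys uniformity with the $B_{m,n}$ and $D_{n,5}$ proofs and a product structure $(n+1)\times 2m$ that makes the count transparent without invoking the explicit root description of $D_{m+n+1}$; the paper's approach is shorter here precisely because the underlying diagram is classical, a shortcut not available for, say, $H_n$. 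As you note, the only care needed in your version is the small-$m$ degeneracies of the fork ($D_2\cong A_1\times A_1$, $D_3\cong A_3$), for which the formulas $|R(D_k)|=2k(k-1)$ still hold, so no gap results.
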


\begin{proof} The tree $C_{m,n}$ can be seen as the
glueing of rational double trees of type $A_{n}$ and
$D_{m}$ by the vertex $E_0$ (see Figure \ref{fig-Cmn}).
Let $Y=\sum_{i=0}^{n}a_iE_i +
\sum_{i=1}^{m}b_j{   F_j} \in R(C_{m, n})$.
Then $a_0=0$ or $\pm 1$.
If $a_0=0$ then $Y$ is supported on $A_{n}$ or $D_{m}$. If it is
supported on $A_{n}$, we have $|R(A_{n})|=n(n+1)$
and, if it is supported on $D_{m}$ we have
$|R(D_{m})|=2m(m-1)$.

 Let us consider the case $a_0=1$.
Note that the underlying diagram of $C_{m, n}$ is of type $D_{m+n+1}$.
Comparing the bilinear forms of $C_{m, n}$ and $D_{m+n+1}$, we have
$(Y\cdot Y)=(Y\cdot Y)_{D_{m+n+1}} -a_{0}^2$, where
we denote by $(\ \cdot \  )_{D_{m+n+1}}$ the bilinear form of
$D_{m+n+1}$.
Since $a_0=1$ and $(Y\cdot Y)=-3$, $(Y\cdot Y)_{D_{m+n+1}}=-2$.
Hence $Y$ can be regarded as a positive root of $R(D_{m+n+1})$
with $a_0=1$. Now let us calculate the number of such roots.
Let $\Delta =\lbrace \pm e_i\pm e_j \mid 1\leq i\not =j\leq m+n+1\rbrace $
be the set of roots in $D_{m+n+1}$. The positive simple roots are
$\alpha_i=e_i-e_{i+1}$, $1\leq i<m+n+1$, and $\alpha_{m+n+1}={    e_{m+n}+e_{m+n+1}}$.
The set of all positive roots satisfying the condition $a_0=1$ is
\[\{e_{i} \pm e_{j} \mid 1 \le i \le n+1 < j \le m+n+1  \}.\]
Hence the number of roots with $a_0=1$ is $2mn+2m$.
We also obtain the same result for $a_0=-1$.
Therefore,
\[|R(C_{m,n})|=n(n+1)+2m(m-1)+4mn+4m=2m^2+4mn+n^2+2m+n.\]
This concludes the proof. \end{proof}


%
%
%
\begin{prop}
The number of roots in $R(D_{n,5})$ is $n^2+33n+72$.

\begin{figure}[h!]
\setlength{\unitlength}{1mm}
\begin{picture}(125,20)(-5,15)
\put(30,18){\makebox(0,0){$E_n$}}
\put(30,22){\circle{2}}
\put(40,22){\circle{2}}
\put(40,18){\makebox(0,0){$E_{n-1}$}}
\put(31,22){\line(1,0){8}}
\put(45,22){\makebox(0,0){$\cdots$}}
\put(50,22){\circle{2}}
\put(50,18){\makebox(0,0){$E_1$}}
\put(51,22){\line(1,0){8}}
\put(58.8,20.5){$\blacksquare$}
\put(60,18){\makebox(0,0){$E_0$}}
\put(61,22){\line(1,0){8}}
\put(70,22){\circle{2}}
\put(71,18){\makebox(0,0){$F_1$}}
\put(71,22){\line(1,0){8}}
\put(80,22){\circle{2}}
\put(81,18){\makebox(0,0){$F_2$}}
\put(81,22){\line(1,0){8}}
\put(90,22){\circle{2}}
\put(91,18){\makebox(0,0){$F_3$}}
\put(91,22){\line(1,0){8}}
\put(100,22){\circle{2}}
\put(101,18){\makebox(0,0){$F_4$}}
\put(80,32){\circle{2}}
\put(80,31){\line(0,-1){8}}
\put(84,32){\makebox(0,0){$F_5$}}
\end{picture}
\caption{Triple diagram of type $D_{n,5}$.}
\label{fig-Dn5}
\end{figure}
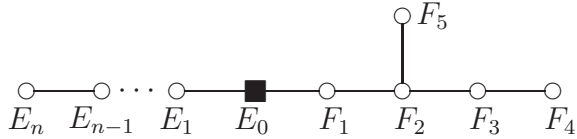
\end{prop}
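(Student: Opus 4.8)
The plan is to mimic the proofs of Propositions~\ref{prop:anmk} and~\ref{prop:B_mn}. Since the roots of $R(D_{n,5})$ occur in pairs $\pm Y$ (Lemma~\ref{lem-suppy}), it suffices to count the positive roots and then double the result. The tree $D_{n,5}$ of Figure~\ref{fig-Dn5} is obtained by glueing at $E_0$ the chain $A_n=\{E_1,\dots,E_n\}$ to a subdiagram of type $D_5$, namely $\{F_1,\dots,F_5\}$, and by Proposition~\ref{prop-a0} a positive root $Y$ has $a_0\in\{0,1\}$. One cautionary remark: unlike for $A_{n,m,k}$, $B_{m,n}$ and $C_{m,n}$, the underlying (weight-two) diagram of $D_{n,5}$ is not classical once $n\ge3$---its branch vertex $F_2$ then has legs of lengths $1$, $2$ and $n+2$---so the global embedding into a classical root system used for $C_{m,n}$ is unavailable, and I would instead localise the argument on a subtree.

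For $a_0=0$, Lemma~\ref{support} forces ${\rm Supp}(Y)$ into one connected component of $\Gamma\setminus\{E_0\}$, so by Proposition~\ref{prop-a0} such a $Y$ is a positive root of $A_n$ or of $D_5$; with $|R(A_n)_+|=\binom{n+1}{2}$ and $|R(D_5)_+|=20$ this accounts for $\tfrac{n(n+1)}{2}+20$ positive roots.

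For $a_0=1$, I would write $Y=A+E_0+F$ with $A=\sum_{i=1}^n a_iE_i$ supported on the $A_n$-chain and $F=\sum_{j=1}^5 b_jF_j$ on the $D_5$-part; since the two branches meet only at $E_0$, one has $(Y\cdot Y)=(A\cdot A)+2a_1+(E_0+F)^2$. Lemma~\ref{lemma:inequality}, applied with $E_1$ as the endpoint of the (symmetric) chain $A_n$, gives $(A\cdot A)+2a_1\le-(a_1-1)^2\le0$, with equality exactly when $A=0$ or $A\in R(A_n)_+$ with $a_1=1$. The subtree $T$ on $\{E_0,F_1,\dots,F_5\}$ is a rational triple tree (Proposition~\ref{prop-subtree}) with underlying diagram $E_6$, in which $E_0$ sits at the far end of a length-two leg; comparing the two intersection forms, which differ only in $E_0^2$, yields $(E_0+F)^2=(E_0+F)^2_{E_6}-1\le-3$, since $E_0+F$ is a non-zero positive divisor on the Dynkin diagram $E_6$. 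Because $(Y\cdot Y)=-3$, both inequalities must be equalities. Hence $A$ runs over the $n+1$ divisors $A=0$ and $A=E_1+\cdots+E_j$ for $1\le j\le n$, while $E_0+F$ runs over the positive roots of $E_6$ with coefficient $1$ at the distinguished vertex; this number equals $|R(E_6)_+|-|R(D_5)_+|=36-20=16$, because the positive roots of $E_6$ with coefficient $0$ there are precisely the positive roots of $E_6\setminus E_0$, which is of type $D_5$ (consistently, $R(T)=R(E_{6,1})=R(E_6)$ by Example~\ref{example:E6}). As the assignment $(A,E_0+F)\mapsto Y$ is visibly a bijection onto the positive roots with $a_0=1$ (connectedness of ${\rm Supp}(Y)$ follows from $a_1=1$ whenever $A\ne0$), this case contributes $16(n+1)$ positive roots.

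Adding the two cases, the number of positive roots equals $\tfrac{n(n+1)}{2}+20+16(n+1)=\tfrac12(n^2+33n+72)$, so that $|R(D_{n,5})|=n^2+33n+72$. The main obstacle will be the case $a_0=1$: one has to recognise that the comparison of intersection forms must be carried out on the subtree $T$, whose underlying diagram \emph{is} classical of type $E_6$, rather than on $D_{n,5}$ itself, and one has to use the absence of edges between the two branches in order to count the $A$-part and the $F$-part independently. The rest is routine bookkeeping with $|R(A_n)_+|=\binom{n+1}{2}$, $|R(D_5)_+|=20$ and $|R(E_6)_+|=36$.
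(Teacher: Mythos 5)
Your proof is correct and takes essentially the same route as the paper's: decompose a positive root with $a_0=1$ as an $A_n$-part plus $E_0+F$, use Lemma \ref{lemma:inequality} to force $(A\cdot A)+2a_1=0$ and $(E_0+F)^2=-3$, and obtain $16(n+1)$ such roots, plus $\tfrac{n(n+1)}{2}+20$ positive roots with $a_0=0$. The only cosmetic differences are that you get the factor $16$ as the number of positive roots of $E_6$ with coefficient $1$ at the end vertex ($36-20$), whereas the paper computes it as $(|R(D_{0,5})|-|R(D_5)|)/2=(72-40)/2$, and that your uniform argument dispenses with the paper's separate treatment of the case $n=0$.
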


\begin{proof} The tree $D_{n,5}$ can be seen as the glueing of the rational trees $A_n$ and $D_5$ by the vertex $E_0$ (see Figure \ref{fig-Dn5}) .
If $n=0$, by Example \ref{example:E6},
we have $R(D_{n,5})=R(E_{6, 1})=R(E_6)$, so $|R(D_{n,5})|=72$. Notice that the
number of roots $Y$ with $(Y\cdot Y)=-2$ is given by $|R(D_5)|$ which
is equal to $40$.

  Now, assume that $n > 0$. Let $Y = \sum_{i=0}^{n}a_i E_i + \sum_{j=1}^5 b_j
F_j$ be a root in $R(D_{n,5})$.
By Proposition \ref{prop-a0}, we have $a_0=0$ or $a_0= \pm 1$ in $Y$.
If $a_0=0$, $Y$ is supported either on $A_n$ or on $D_5$. So we have $|R(A_{n})|+ |R(D_5)|=n(n+1)+40$ roots, in total, for ${    a_0}=0$.

  Assume that $a_0=1$.
Put $D=\sum_{i=1}^n a_i E_i$ and $F=E_0+\sum_{j=1}^5 b_j F_j$.
Then we have
\[-3=(Y\cdot Y)=(D+F)^2=(D\cdot D)+2a_1+(F\cdot F).\] 
{    If $D$ is a positive divisor, then $(D\cdot D)+2a_1 \le (D\cdot D) + a_1^2+1 \le 0$ 
by Lemma \ref{lemma:inequality}. If $D=0$, then $(D\cdot D) + 2a_1=0$. 
Since $(F\cdot F) \le -3$, 
we see that $(D\cdot D)=-2{   a_1}$ ($a_1=0$ or $1$) and $(F\cdot F)=-3$. } 
By the connectedness of ${\rm Supp} (Y)$, 
$D=0$ or $D=\sum_{j=1}^{k}E_j$ for some $1 \le k \le n$, and  $F=E_0$ or $b_1 > 0$.
The number of elements in the set $\{ F \mid F=E_0 \mbox{ or } b_1 > 0 \}$ is equal to
$(72-40)/2=16$ since $|R(D_{0,5})|=72$ and $|R(D_5)|= 40$.
Hence there exist $16(n+1)$ roots with $a_0=1$.
Similarly, there exist $16(n+1)$ roots with $a_0=-1$.
Therefore we have \[|R(D_{n, 5})| = n(n+1)+40+2\cdot 16(n+1) = n^2+33n+72.\] 
\end{proof}

%
\begin{prop}
The number of roots in $R(F_{n})$ is $n^2+55n+126$.

\begin{figure}[h!]
\setlength{\unitlength}{1mm}
\begin{picture}(125,20)(0,15)
\put(30,18){\makebox(0,0){$E_n$}}
\put(30,22){\circle{2}}
\put(40,22){\circle{2}}
\put(40,18){\makebox(0,0){$E_{n-1}$}}
\put(31,22){\line(1,0){8}}
\put(45,22){\makebox(0,0){$\cdots$}}
\put(50,22){\circle{2}}
\put(51,18){\makebox(0,0){$E_1$}}
\put(51,22){\line(1,0){8}}
\put(58.8,20.5){$\blacksquare$}
\put(60,18){\makebox(0,0){$E_0$}}
\put(61,22){\line(1,0){8}}
\put(70,22){\circle{2}}
\put(71,18){\makebox(0,0){$F_1$}}
\put(71,22){\line(1,0){8}}
\put(80,22){\circle{2}}
\put(81,18){\makebox(0,0){$F_2$}}
\put(81,22){\line(1,0){8}}
\put(90,22){\circle{2}}
\put(91,18){\makebox(0,0){$F_3$}}
\put(91,22){\line(1,0){8}}
\put(100,22){\circle{2}}
\put(101,18){\makebox(0,0){$F_4$}}
\put(101,22){\line(1,0){8}}
\put(110,22){\circle{2}}
\put(111,18){\makebox(0,0){$F_5$}}
\put(90,32){\circle{2}}
\put(90,31){\line(0,-1){8}}
\put(94,32){\makebox(0,0){$F_6$}}
\end{picture}
\caption{Triple diagram of type $F_{n}$.}
\label{fig-Fn}
\end{figure}
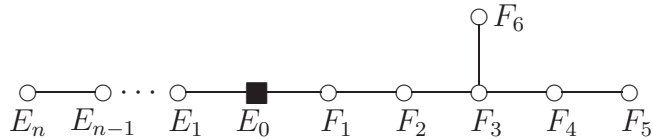
\end{prop}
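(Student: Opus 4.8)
The plan is to run the argument used above for $R(D_{n,5})$, replacing the subtree $D_5$ by the Dynkin diagram $E_6$. First I would record the combinatorics of $F_n$: it is the glueing, along the weight-$3$ vertex $E_0$, of the rational tree $A_n$ on $\{E_1,\dots,E_n\}$ and the rational tree $E_6$ on $\{F_1,\dots,F_6\}$ (whose trivalent vertex $F_3$ has arms of lengths $1,2,2$); adjoining $E_0$ to the $E_6$-part extends one of its length-$2$ arms to length $3$, so that $\{E_0,F_1,\dots,F_6\}$ is the triple Dynkin diagram $E_{7,6}$, for which $R(E_{7,6})=R(E_7)$ by Example \ref{example:E7} (rationality of these subtrees is Proposition \ref{prop-subtree}). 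The numerical inputs are $|R(A_n)|=n(n+1)$, $|R(E_6)|=72$ and $|R(E_7)|=126$. The case $n=0$ is then immediate: $F_0$ is the triple Dynkin diagram $E_{7,6}$, so $|R(F_0)|=126$, in agreement with the formula.

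For $n>0$, write a root as $Y=\sum_{i=0}^{n}a_iE_i+\sum_{j=1}^{6}b_jF_j$. By Proposition \ref{prop-a0}, $a_0\in\{0,\pm1\}$, and I would treat these cases separately. If $a_0=0$, then connectedness of ${\rm Supp}(Y)$ (Lemma \ref{support}) confines $Y$ to $\{E_1,\dots,E_n\}$ or to $\{F_1,\dots,F_6\}$, contributing $|R(A_n)|+|R(E_6)|=n(n+1)+72$ roots.

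The main case is $a_0=1$. I would set $D=\sum_{i=1}^{n}a_iE_i$ and $G=E_0+\sum_{j=1}^{6}b_jF_j$; since $E_0$ meets the support of $D$ only along the edge $E_0E_1$, one has $(Y\cdot Y)=(D\cdot D)+2a_1+(G\cdot G)=-3$. Lemma \ref{lemma:inequality}, applied to the chain $E_1,\dots,E_n$ with $E_1$ as end vertex, gives $(D\cdot D)+2a_1\le-(a_1-1)^2\le0$, while $p_a(G)\le0$ on the rational triple subtree $E_{7,6}$ (only $E_0$ there has weight $>2$) gives $(G\cdot G)\le-3$; hence both inequalities are equalities. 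The first forces either $D=0$, or $a_1=1$ with $(D\cdot D)=-2$, so by connectedness $D$ ranges over $0,\,E_1,\,E_1+E_2,\,\dots,\,E_1+\cdots+E_n$, i.e.\ $n+1$ choices. The second says $G$ is a positive root of $R(E_{7,6})$ with $a_0=1$; by Proposition \ref{prop-a0} the roots of $R(E_{7,6})$ with $a_0=0$ are exactly those supported on $\{F_1,\dots,F_6\}\cong E_6$, so, using $R(E_{7,6})=R(E_7)$, the number with $a_0=1$ is $(|R(E_7)|-|R(E_6)|)/2=(126-72)/2=27$. Hence there are $27(n+1)$ roots with $a_0=1$, and by the symmetry $Y\mapsto-Y$ the same number with $a_0=-1$. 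Summing, $|R(F_n)|=n(n+1)+72+2\cdot27(n+1)=n^2+55n+126$, as claimed.

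The step I expect to demand most care is the bookkeeping that each admissible pair $(D,G)$ really yields an element of $R(F_n)$ — one verifies $(Y\cdot Y)=-3$, connectedness of ${\rm Supp}(Y)$, and the divisibility condition $3(Y\cdot Y')/(Y\cdot Y)\in\Z$ of Definition \ref{def-tripleroot}, exactly as in the preceding propositions — together with the correct identification of the $E_6$- and $E_7$-subtrees and the ensuing count of $27$ glued-in pieces $G$, which relies on $R(E_{7,6})=R(E_7)$ and on Proposition \ref{prop-a0} pinning $a_0$ to $\{0,\pm1\}$.
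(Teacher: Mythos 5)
Your proof is correct and follows essentially the same route as the paper's: split on $a_0\in\{0,\pm1\}$, decompose $Y=D+G$ with $G=E_0+\sum_j b_jF_j$, use Lemma \ref{lemma:inequality} to force $(D\cdot D)+2a_1=0$ and $(G\cdot G)=-3$, and count $(n+1)\cdot 27$ roots for each sign of $a_0$, giving $n(n+1)+72+54(n+1)=n^2+55n+126$. Your identification of the subtree $\{E_0,F_1,\dots,F_6\}$ as $E_{7,6}$ with $R(E_{7,6})=R(E_7)$ is in fact more precise than the paper's reference to $E_{7,1}$ in the $n=0$ case, which appears to be a slip since $|R(E_{7,1})|=124$ by Proposition \ref{prop-E71}.
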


\begin{proof} The idea is the same as in the preceding
proof. The tree $F_n$ is the glueing of the rational trees
$A_n$ and $E_6$  (see Figure \ref{fig-Fn}). In case $n=0$, we obtain
$|R(F_0)|=|R(E_{7,1})|=|R(E_7)|$ which is equal to $126$.

 Assume that $n > 0$. Let $Y=\sum_{i=0}^n a_i E_i + \sum_{j=1}^6 b_j F_j$ be a root in $R(F_n)$.
By Proposition \ref{prop-a0}, we have $a_0=0$ or $a_0=\pm 1$. When $a_0=0$,
$Y$ is supported either on $A_n$ or on $E_6$, so we have $|R(A_n)| +
|R(E_6)| = n(n+1)+72$ roots.

 Assume that $a_0=1$. Since $\textnormal{Supp}(Y)$ is connected,
$Y$ is of  the form $D+F${   ,} where $D=\sum_{i=1}^{n-1} a_i E_i$ and
$F=E_0+\sum_{j=1}^{6}b_jF_j$.
Consider \[(Y\cdot Y)=(D\cdot D)+{   2a_1}+(F\cdot
F).\] 
As in the proof of the case $D_{n, 5}$ we have
{    $(D\cdot D)=-2a_1$ ($a_1=0$ or $1$)} and $(F\cdot F)=-3$.
This says that $D=0$ or $D= \sum_{i=1}^{k} E_i$ for some $k$ and that
the number of the roots $F$ is $(126-72)/2=27$. 
We also obtain the same number for $a_0 = -1$.
Therefore we have $|R(D_{n,5})|=n^2+55n+126$.\end{proof}

\begin{remark} In Table \ref{tablea}, Section \ref{subsect-lfd}, we provide a general picture of the roots in $R(\Gamma)$ for $A_{m,n,k}$, $B_{m,n}$, $C_{m,n}$, $D_{n,5}$ and $F_n$. 
\end{remark}

%
%
%

\begin{prop}\label{prop:Hn}
The number of roots in $R(H_{n})$ is $(n^3-n)/3$ for $n \ge 5$.

\begin{figure}[h!]
\setlength{\unitlength}{1mm}
\begin{picture}(90,20)(-20,15)
\put(19,18){\makebox(0,0){$E_{n-1}$}}
\put(20,22){\circle{2}}
\put(30,22){\circle{2}}
\put(29,18){\makebox(0,0){$E_{n-2}$}}
\put(21,22){\line(1,0){8}}
\put(31,22){\line(1,0){8}}
\put(41,18){\makebox(0,0){$E_{n-3}$}}
\put(40,22){\circle{2}}
\put(45,22){\makebox(0,0){$\cdots$}}
\put(50,22){\circle{2}}
\put(51,18){\makebox(0,0){$E_4$}}
\put(51,22){\line(1,0){8}}
\put(60,22){\circle{2}}
\put(60,18){\makebox(0,0){$E_3$}}
\put(61,22){\line(1,0){8}}
\put(70,22){\circle{2}}
\put(71,18){\makebox(0,0){$E_2$}}
\put(71,22){\line(1,0){8}}
\put(80,22){\circle{2}}
\put(81,18){\makebox(0,0){$E_1$}}
\put(58.5,31){$\blacksquare$}
\put(60,31){\line(0,-1){8}}
\put(64,32){\makebox(0,0){$E_0$}}
\end{picture}
\caption{Triple diagram of type $H_{n}$.}
\label{fig-Hn}
\end{figure}
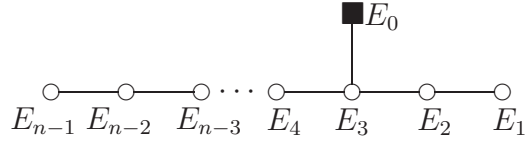
\end{prop}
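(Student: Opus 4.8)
The plan is to stratify $R(H_n)$ by the coefficient $a_0$ of the weight-$3$ vertex $E_0$, which by Proposition \ref{prop-a0} equals $0$ or $\pm 1$. By Lemma \ref{lem-suppy} every root is a positive or a negative divisor, and since $(-Y)\cdot(-Y) = (Y\cdot Y)$ the map $Y\mapsto -Y$ is an involution of $R(H_n)$ interchanging the roots with $a_0=1$ and those with $a_0=-1$. Writing $R_0$ for the set of roots with $a_0=0$ and $R_+$ for those with $a_0=1$, we get $|R(H_n)| = |R_0| + 2|R_+|$, so it is enough to compute these two numbers.

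The number $|R_0|$ is immediate: since $E_0$ meets the chain $E_1,\dots,E_{n-1}$ only at $E_3$, the bilinear form of $H_n$ restricts on that subchain to the form of the Dynkin diagram $A_{n-1}$, and by Proposition \ref{prop-a0} a root with $a_0=0$ has self-intersection $-2$; hence $R_0$ is exactly the copy of $R(A_{n-1})$ inside $R(H_n)$, and $|R_0| = |R(A_{n-1})| = n(n-1)$. The asserted formula $(n^3-n)/3 = n(n-1) + 2\binom{n}{3}$ therefore reduces to the single claim $|R_+| = \binom{n}{3}$.

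For a root $Y$ with $a_0=1$ write $Y = E_0 + Y'$ with $Y' = \sum_{i=1}^{n-1} a_i E_i \ge 0$ (possibly $Y'=0$). As $E_0$ meets the chain only at $E_3$, one has $(Y\cdot Y) = -3 + 2a_3 + (Y'\cdot Y')$, so $Y\in R(H_n)$ is equivalent to $(Y'\cdot Y') = -2a_3$. Substituting the standard identity $(Y'\cdot Y') = -a_1^2 - a_{n-1}^2 - \sum_{i=1}^{n-2}(a_i-a_{i+1})^2$ for the $A_{n-1}$ form and collecting the first three of the resulting $n$ squares separately from the remaining $n-3$, this condition becomes
\[
  \ell_1^2+\ell_2^2+\ell_3^2 \;+\; r_1^2+\cdots+r_{n-3}^2 \;=\; 2a_3 ,
\]
where $(\ell_1,\ell_2,\ell_3) = (a_1,\; a_2-a_1,\; a_3-a_2)$ traverses the arm $E_1,E_2$ (of length $2$) and $(r_1,\dots,r_{n-3}) = (a_3-a_4,\; a_4-a_5,\;\dots,\; a_{n-2}-a_{n-1},\; a_{n-1})$ traverses the arm $E_4,\dots,E_{n-1}$ (of length $n-4$, which is where $n\ge 5$ is used); these are integer tuples with $\ell_1+\ell_2+\ell_3 = a_3 = r_1+\cdots+r_{n-3}$. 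Now apply the elementary inequality $x^2\ge x$, valid for every integer $x$: it gives $\ell_1^2+\ell_2^2+\ell_3^2\ge a_3$ and $r_1^2+\cdots+r_{n-3}^2\ge a_3$, with equality in either case precisely when all the terms concerned lie in $\{0,1\}$. The displayed equation forces both equalities; in particular $a_3 = \ell_1+\ell_2+\ell_3\in\{0,1,2,3\}$, and beyond this the tuples $(\ell_j)\in\{0,1\}^3$ and $(r_j)\in\{0,1\}^{n-3}$ are unconstrained except for having equal coordinate sums. Conversely every such pair of $\{0,1\}$-tuples with common sum $k$ reconstructs a $Y'$ whose coefficients are non-negative and unimodal with apex $a_3 = k$ at $E_3$, hence a genuine $Y = E_0+Y'\in R_+$, and distinct pairs yield distinct roots. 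Since there are $\binom{3}{k}$ choices of $(\ell_j)$ and $\binom{n-3}{k}$ choices of $(r_j)$ with a prescribed common sum $k$, Vandermonde's identity yields
\[
  |R_+| \;=\; \sum_{k=0}^{3}\binom{3}{k}\binom{n-3}{k} \;=\; \binom{n}{3},
\]
whence $|R(H_n)| = n(n-1) + 2\binom{n}{3} = (n^3-n)/3$.

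The only step that does real work is the collapse of the Diophantine equation via $x^2\ge x$; everything else — the self-intersection identity and the check that the $(\ell_j,r_j)$-data biject with $R_+$, i.e. non-negativity and connectedness of the reconstructed support — is routine verification. The one point where carelessness would change the final answer is the count of the two chain-arms at $E_3$, namely $2$ and $n-4$ vertices, which is exactly what produces the factors $\binom{3}{k}$ and $\binom{n-3}{k}$; I anticipate no obstacle beyond keeping those indices straight.
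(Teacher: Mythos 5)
Your proof is correct, but it takes a genuinely different route from the paper's. The paper argues by induction on $n$: the base case is Lemma \ref{lemma:H5} ($H_5=D_{5,5}$, so $|R(H_5)|=|R(D_5)|=40$), and the inductive step (Lemma \ref{prop:Hnnumber}) shows $|R(H_n)|-|R(H_{n-1})|=n^2-n$ by sorting the roots involving $E_{n-1}$ into three types and, for the hardest type, deriving unimodality constraints (Lemma \ref{lemma:type3}) and then enumerating the admissible cycles in five explicit families (Lemma \ref{lem-listHn}). You instead stratify once and for all by $a_0$, identify the $a_0=0$ stratum with $R(A_{n-1})$ (giving $n(n-1)$), and count the $a_0=1$ stratum in closed form: the standard expression of the $A_{n-1}$ form as a sum of squares turns the condition $(Y'\cdot Y')=-2a_3$ into an equation $\sum\ell_j^2+\sum r_j^2=2a_3$ with $\sum\ell_j=\sum r_j=a_3$, and the integer inequality $x^2\ge x$ forces all increments into $\{0,1\}$, whence $|R_+|=\sum_k\binom{3}{k}\binom{n-3}{k}=\binom{n}{3}$ by Vandermonde. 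The steps you call routine do check out: both telescoping sums equal $a_3$; the reconstructed coefficients are non-negative and unimodal with apex at $E_3$, so adjacency of $E_0$ to $E_3$ gives connected support, and positivity together with $(Y\cdot Y)=-3$ is all that Definition \ref{def-tripleroot} requires; parity of $(Y'\cdot Y')$ rules out $(Y\cdot Y)=-2$ when $a_0=1$, consistent with Proposition \ref{prop-a0}; and the arm lengths $2$ and $n-4$ indeed produce the binomials $\binom{3}{k}$ and $\binom{n-3}{k}$ (with $\binom{n-3}{k}=0$ automatically truncating $k$ when $n=5$). What your argument buys is a uniform, non-inductive computation that absorbs the paper's five-family case analysis into the single inequality $x^2\ge x$, plus the clean by-product that exactly $\binom{n}{3}$ roots have $a_0=1$; what the paper's route buys is the explicit list of sincere roots in Lemma \ref{lem-listHn}, which is reused later in the proof of Theorem \ref{prop-lfd}, together with the recursion $|R(H_n)|-|R(H_{n-1})|=n^2-n$.
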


Proposition \ref{prop:Hn} is a consequence of the following two lemmas.
\begin{lemma}\label{lemma:H5}
Proposition \ref{prop:Hn} is true for $n=5$.
\end{lemma}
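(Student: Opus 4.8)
The plan is to compute $R(H_5)$ directly by bounding the coefficients of a positive root and then enumerating. First I would observe that $H_5$ is the tree with central vertex $E_0$ (weight $3$) attached to $E_3$ in a chain $E_1-E_2-E_3-E_4$, so the underlying diagram $\Gamma_0$ obtained by lowering the weight of $E_0$ to $2$ is exactly the Dynkin diagram $D_5$ (with $E_0,E_4$ the two short legs off $E_3-E_2-E_1$). By the Proposition preceding Example \ref{example:An}, $R(H_5) \subseteq R((H_5)_0) = R(D_5)$, and $|R(D_5)| = 40$. So it suffices to prove the reverse inclusion $R(D_5) \subseteq R(H_5)$, i.e. that \emph{every} root of $D_5$ remains a root of $H_5$. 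By the Corollary following the Remark on the highest root, this holds precisely when the coefficient $a_0$ of $E_0$ in the highest root $Z(D_5)$ equals $1$.

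Hence the key step is simply to check that the highest root of $D_5$, written in the simple-root basis indexed so that $E_0$ is the simple root corresponding to the triple vertex, has $E_0$-coefficient equal to $1$. For $D_n$ the highest root is $E_1 + 2E_2 + \cdots + 2E_{n-2} + E_{n-1} + E_n$ in the standard labelling, where the two vertices with coefficient $1$ are the endpoints of the short fork; in $H_5$ the triple vertex $E_0$ sits at one end of that short fork (it is a valency-$2$ leaf-type vertex of $D_5$), so indeed $a_0 = 1$ in $Z(D_5)$. I would also cross-check this against the root-closure condition in Definition \ref{def-tripleroot}: for $Y, Y' \in R(H_5)$ with $(Y\cdot Y) = -3$ we need $3(Y\cdot Y')/(Y\cdot Y) = -(Y\cdot Y') \in \Z$, which is automatic since the intersection form is integer-valued; so no roots are excluded by that extra constraint, and $R(H_5)$ is genuinely all of $R(D_5)$.

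The main obstacle is essentially bookkeeping: making sure the identification of $(H_5)_0$ with $D_5$ uses a labelling in which the triple vertex really is one of the two coefficient-$1$ nodes of the highest root, rather than the central coefficient-$2$ node — if it were the latter the inclusion would be strict and the count would differ. Once the labelling is pinned down, the Corollary does all the work and $|R(H_5)| = |R(D_5)| = 40 = (5^3-5)/3$, completing the proof. As a sanity check one can also split the count as in the earlier propositions: roots with $a_0 = 0$ are supported on a subdiagram of type $A_4$ (the chain $E_1-E_2-E_3-E_4$), giving $|R(A_4)| = 20$ of them, and roots with $a_0 = \pm 1$ must then number $20$ as well, i.e. $10$ positive ones, matching the positive roots of $D_5$ not supported on $A_4$.
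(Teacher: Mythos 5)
Your proof is correct and follows essentially the same route as the paper: the paper simply observes that $H_5=D_{5,5}$ and cites the relation $R(D_{5,5})=R(D_5)$ from Example \ref{example:Dn} to get $|R(H_5)|=|R(D_5)|=40$, whereas you re-derive that equality via the Corollary by checking that the triple vertex sits on a short leg of $D_5$ and hence has coefficient $1$ in the highest root. (Minor slip only: the highest root of $D_5$ has three coefficient-$1$ nodes, not two, but the one that matters — the short-leg node playing the role of $E_0$ — does have coefficient $1$, so your argument stands.)
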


\begin{proof}
If $n=5$, then $H_5=D_{5, 5}$ (see Figure \ref{fig-Hn}). The fact that $R(D_{5, 5})=
R(D_{5})$ implies that $|R(H_{5})|=40$. This
completes the proof. \end{proof}

{    

\begin{lemma}\label{prop:Hnnumber}
For each $n \ge 6$, we have $|R(H_{n})| - |R(H_{n-1})| = n^2-n$.
\end{lemma}

\begin{proof} 
We have $R(H_{n}) \supset R(H_{n-1}) := R(H_{n})\cap
\oplus_{i=0}^{n-2}{\mathbb Z}E_{i}$. We only need to show that the
number of the elements in $R(H_{n}) \setminus R(H_{n-1})$ is $n^2-n$.
If $Y = \sum_{i=0}^{n-1}a_i E_i \in R(H_{n}) \setminus R(H_{n-1})$, then $Y$ is one of the
following three types:
\begin{enumerate}
\item[(1)] $Y \in R(A_{n-1})=  R(H_{n})\cap
\oplus_{i=1}^{n-1}{\mathbb Z}E_{i}$ and $a_{n-1} \neq 0$,
\item[(2)] $Y \in R(D_{n-1, 1})=  R(H_{n})\cap
{\mathbb Z}E_{0}\oplus (\oplus_{i=2}^{n-1}{\mathbb Z}E_{i})$,
$a_{0} \neq 0$, and $a_{n-1} \neq 0$,
\item[(3)] the case that $a_{0} \neq 0$, $a_{1} \neq 0$, and $a_{n-1}\neq
  0$.
\end{enumerate}
It is easy to check that the number of all roots of type (1)
is $2(n-1)$ and that the number of all roots of type (2)
is $2(n-2)$. 
By Lemmas \ref{lemma:type3} and \ref{lem-listHn}, 
we see that the number of all roots of type
(3) is $(n-2)(n-3)$.  Hence $|R(H_{n})
\setminus R(H_{n-1})| = 2(n-1)+ 2(n-2)+(n-2)(n-3)=n^2-n$. 
\end{proof} 

The following two lemmas have been used in Lemma \ref{prop:Hnnumber}. 
}

\begin{lemma}\label{lemma:type3}
If $Y = \sum_{i=0}^{n-1}a_i E_i$ is a positive root of type (3) {    in the proof of 
Lemma \ref{prop:Hnnumber},} then
$Y$ satisfies the following conditions:
\begin{enumerate}
\item[(a)] $a_{0}=a_{1}=a_{n-1}=1$,
\item[(b)] $a_{1} \le a_{2} \le a_{3} \ge a_{4} \ge a_{5} \ge \cdots \ge
  a_{n-1}$ and $a_{3} \le 3$,
\item[(c)] $| a_{i} - a_{i+1} | \le 1$ for each $1 \le i \le n-2$.
\end{enumerate}
\end{lemma}

\begin{proof} 
Put $D=E_{1}+E_{2}+\cdots + E_{n-1}$.
Since ${\rm Supp}(Y)$ is connected,
$Y-D>0$. Now we show that $Y-D \in R(H_n)$. Indeed,
by easy calculation we have 
\[(Y-D)^2 = Y^2-2YD+D^2 =
-3+2(a_{1}+a_{n-1}-1)-2=-7+2(a_{1}+a_{n-1}).\]
Here note that $Y^2=-3$ and $a_{0}=1$ by Proposition \ref{prop-a0}.
Moreover, by Lemma \ref{lemma:-2}, we have $(Y-D)^2 \le -2$.
On the other hands, the assumption
$a_{1}>0$ and $a_{n-1}>0$ implies that $(Y-D)^2 \ge -3$.
Since $(Y-D)^2$ is odd, $(Y-D)^2=-3$ and $a_{1}=a_{n-1}=1$.
Hence $Y-D \in R(H_n)$ and 
{    $Y$ satisfies the condition (a).

Note that $Y-D \in R(D_{n-2, 1}) = R(H_n) \cap 
({\mathbb Z}E_{0}\oplus (\oplus_{i=2}^{n-2}{\mathbb Z}E_{i}))$. 
By Example \ref{example:Dn},  $R(D_{n-2, 1}) = R(D_{n-2})$. 
Since $Y-D \in R(D_{n-2})$, 
$Y-D = E_0 + \sum_{i=2}^{n-2} b_i E_i$ 
satisfies 
$b_2 \le 1$, $b_{n-2} \le 1$, $b_2 \le b_3 \ge b_4 \ge b_5 \ge \cdots \ge b_{n-2}$, and 
$| b_i - b_{i+1} | \le 1$ for $2 \le i \le n-3$. 
We easily  check that $Y$ satisfies the conditions
(a), (b), and (c) except $a_{3} \le 3$.
Finally we see that $a_{3} \le a_{2}+1 \le a_{1}+2=3$.
This completes the proof of the lemma. }  
\end{proof}

\begin{lemma}\label{lem-listHn}
The number of all roots of type (3) {    in the proof of 
Lemma \ref{prop:Hnnumber}} is equal to $(n-2)(n-3)$.
\end{lemma}

\begin{proof}
It is easy to see that any cycle satisfying the conditions (a), (b),
and (c) is a root.
Therefore we only need to show that the number of all positive cycles
satisfying (a)-(c) is equal to $(n-2)(n-3)/2$.
For listing up all roots we denote a root by
$(a_2, a_3, a_4, \ldots, a_{n-2})$ since
$a_{0}=a_{1}=a_{n-1}=1$. All roots can be grouped into the following five forms.

\begin{enumerate}
\item $(1, 1, 1, \ldots, 1)$,
\item $(1,\underbrace{2, 2, \ldots, 2}_{i}, \underbrace{1, \ldots, 1}_{j})$, 
\item $(2, \underbrace{2,2, \ldots, 2}_{i}, \underbrace{1, \ldots, 1}_{j})$, 
\item $(2, \underbrace{3, 3, \ldots, 3}_{i},\underbrace{2, \ldots, 2}_{j})$, where {   $i, j\geq 1$} with $i+j=n-4$, and
\item $(2, \underbrace{3, 3, \ldots, 3}_{i}, \underbrace{2, \ldots, 2}_{j}, \underbrace{1, \ldots, 1}_{k})$ where {   $i, j, k\geq 1$} with $i+j+k=n-4$.\\
\end{enumerate}

 We can easily check that the number of roots in the $5$ cases
above are $1, (n-4), (n-4), {   (n-5)}$, and $(n-5)(n-6)/2$, respectively.
Summing up these, we obtain the number of all roots of type (3) which is $(n-2)(n-3)$.
\end{proof}



{    

\begin{proof}[Proof of Proposition \ref{prop:Hn}] 
By Lemmas \ref{lemma:H5} and \ref{prop:Hnnumber}, we easily see that 
$|R(H_{n})| =(n^3-n)/3$ for $n \ge 5$.
\end{proof} 

}


\begin{prop}\label{prop-E71}
The number of roots in $R(E_{7,1})$ is $124$.

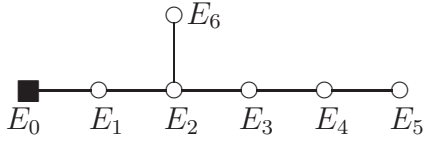
\begin{figure}[h!]
\setlength{\unitlength}{1mm}
\begin{picture}(125,20)(-15,15)
\put(30,18){\makebox(0,0){$E_0$}}
\put(29,20.5){$\blacksquare$}
\put(31,22){\line(1,0){8}}
\put(41,18){\makebox(0,0){$E_1$}}
\put(40,22){\circle{2}}
\put(50,22){\circle{2}}
\put(51,18){\makebox(0,0){$E_2$}}
\put(41,22){\line(1,0){8}}
\put(51,22){\line(1,0){8}}
\put(60,22){\circle{2}}
\put(61,18){\makebox(0,0){$E_3$}}
\put(61,22){\line(1,0){8}}
\put(70,22){\circle{2}}
\put(71,18){\makebox(0,0){$E_4$}}
\put(71,22){\line(1,0){8}}
\put(80,22){\circle{2}}
\put(81,18){\makebox(0,0){$E_5$}}
\put(50,31){\line(0,-1){8}}
\put(50,32){\circle{2}}
\put(54,32){\makebox(0,0){$E_6$}}
\end{picture}
\caption{Triple diagram of type $E_{7,1}$.}
\label{fig-E71}
\end{figure}
\end{prop}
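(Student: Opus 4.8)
The plan is to compare $R(E_{7,1})$ with $R(E_7)$, in the same spirit as the treatment of $C_{m,n}$ above. First one checks that the underlying diagram of $E_{7,1}$ is $E_7$: deleting the triple vertex $E_0$ from the tree in Figure \ref{fig-E71} leaves $E_1 - E_2 - E_3 - E_4 - E_5$ with $E_6$ hanging from $E_2$, so the branch vertex is $E_2$ and its three arms have lengths $1$, $2$, $3$ (the length-$2$ arm being $E_2 - E_1 - E_0$); hence the underlying diagram is $E_7$ with $E_0$ at the tip of the length-$2$ arm. By the Proposition stating $R(\Gamma)\subseteq R(\Gamma_0)$ when $\Gamma_0$ is classical, we have $R(E_{7,1})\subseteq R(E_7)$, and since $|R(E_7)|=126$ it remains to count $R(E_7)\setminus R(E_{7,1})$.

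For $Y=\sum a_iE_i\in R(E_7)$ one has $(Y\cdot Y)_{E_{7,1}}=(Y\cdot Y)_{E_7}-a_0^2=-2-a_0^2$, and since the divisibility condition in Definition \ref{def-tripleroot} is automatic once the self-intersection equals $-2$ or $-3$, Proposition \ref{prop-a0} shows that $Y\in R(E_{7,1})$ if and only if $a_0\in\{-1,0,1\}$. Therefore $R(E_7)\setminus R(E_{7,1})=\{Y\in R(E_7)\mid |a_0|\ge 2\}$, and the proposition follows as soon as one shows this set has exactly two elements.

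To identify those elements, recall that every positive root of $E_7$ is $\le\theta$, the highest root, which coincides with the Artin cycle of $E_7$; under the identification above $E_0$ is the simple root at the tip of the length-$2$ arm of $E_7$, and its coefficient in $\theta$ is $2$. Thus $|a_0|\le 2$ on all of $R(E_7)$, so we only need to count positive roots with $a_0=2$, and I claim $\theta$ is the only one. Indeed, if $\beta$ is such a root then $\theta-\beta\ge 0$ has vanishing $E_0$-coefficient, hence is supported away from $E_0$; since $\theta$ is the highest root of the simply laced system $E_7$ one has $(\theta\cdot E_j)=0$ for every $E_j\neq E_0$, so $(\theta\cdot(\theta-\beta))=0$, i.e. $(\theta\cdot\beta)=(\theta\cdot\theta)=-2$, which for two roots of $E_7$ forces $\beta=\theta$. (Equivalently, this is immediate from the explicit list of positive roots of $E_7$ in \cite{bourbaki}.) Hence $R(E_7)\setminus R(E_{7,1})=\{\theta,-\theta\}$ and $|R(E_{7,1})|=126-2=124$.

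The only genuine point is the uniqueness of $\theta$ among positive roots with $a_0=2$; everything else is bookkeeping with the formula $(Y\cdot Y)_{E_{7,1}}=(Y\cdot Y)_{E_7}-a_0^2$. A direct count is also available and serves as a consistency check: the roots with $a_0=0$ are the $|R(D_6)|=60$ roots supported on $E_{7,1}\setminus\{E_0\}\cong D_6$, while those with $a_0=\pm 1$ are of the form $\pm E_0+D$ with $D$ a positive cycle on $D_6$ meeting $E_1$ — counting these again reduces to the computation above and gives $32+32$ more, for a total of $124$. Comparing with $R(E_7)$ is the cleanest route, so I would present that version.
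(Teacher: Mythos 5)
Your proposal is correct and follows essentially the same route as the paper: both identify $R(E_{7,1})$ as the set of roots of $E_7$ whose coefficient at the triple vertex has absolute value at most $1$, and both conclude $|R(E_{7,1})|=|R(E_7)|-2=124$ by removing $\pm Z(E_7)$. The only difference is that you actually justify the key assertion that the highest root is the unique positive root of $E_7$ with coefficient $2$ at that vertex (via the pairing $(\theta\cdot E_j)=0$ for $j\neq 0$), a fact the paper simply states without proof.
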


\begin{proof} Note that $R(E_{7}) \supset R(E_{7, 1})$ (see Figure \ref{fig-E71}). Let $Z(E_7)$ be the Artin cycle  of $R(E_7)$.
The coefficient of $E_1$ in $Z(E_7)$ is $2$. For each positive root in $R(E_7)$ except $Z(E_7)$ the coefficient of $E_1$ is less than $2$. Hence $R(E_7) \setminus \{ \pm Z(E_7) \} = R(E_{7, 1})$, and consequently,
 \[|R(E_{7,1})| = |R(E_7)| -2 = 124.\]
\end{proof}

\begin{prop}\label{prop-E81}
The number of roots in $R(E_{8,1})$ is $238$.
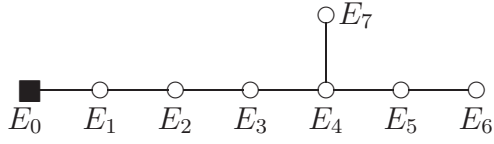
\begin{figure}[h!]
\setlength{\unitlength}{1mm}
\begin{picture}(120,20)(-10,15)
\put(30,18){\makebox(0,0){$E_0$}}
\put(29,20.5){$\blacksquare$}
\put(31,22){\line(1,0){8}}
\put(40,18){\makebox(0,0){$E_1$}}
\put(40,22){\circle{2}}
\put(50,22){\circle{2}}
\put(50,18){\makebox(0,0){$E_2$}}
\put(41,22){\line(1,0){8}}
\put(51,22){\line(1,0){8}}
\put(60,22){\circle{2}}
\put(60,18){\makebox(0,0){$E_3$}}
\put(61,22){\line(1,0){8}}
\put(70,22){\circle{2}}
\put(70,18){\makebox(0,0){$E_4$}}
\put(71,22){\line(1,0){8}}
\put(80,22){\circle{2}}
\put(80,18){\makebox(0,0){$E_5$}}
\put(81,22){\line(1,0){8}}
\put(90,22){\circle{2}}
\put(90,18){\makebox(0,0){$E_6$}}
\put(70,31){\line(0,-1){8}}
\put(70,32){\circle{2}}
\put(74,32){\makebox(0,0){$E_7$}}
\end{picture}
\caption{Triple diagram of type $E_{8,1}$.}
\label{fig-E81}
\end{figure}
\end{prop}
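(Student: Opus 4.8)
The plan is to follow the pattern of Proposition~\ref{prop-E71}. The underlying diagram $\Gamma_0$ of $E_{8,1}$ is of type $E_8$, hence classical, so $R(E_{8,1}) \subseteq R(E_8)$; and conversely, by the computation $(Y\cdot Y)=(Y\cdot Y)_{\Gamma_0}-a_0^2$ used to establish that inclusion, any $Y \in R(E_8)$ whose $E_0$-coefficient $a_0$ satisfies $|a_0|\le 1$ has $(Y\cdot Y) = -2-a_0^2 \in \{-2,-3\}$, and the divisibility clause of Definition~\ref{def-tripleroot} is automatic for such a $Y$ (if $(Y\cdot Y)=-3$ then $3(Y\cdot Y')/(Y\cdot Y) = -(Y\cdot Y')\in\Z$), so $Y\in R(E_{8,1})$. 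Combined with Proposition~\ref{prop-a0}, this gives
\[
R(E_{8,1}) = \{\, Y \in R(E_8) \mid |a_0| \le 1 \,\},
\]
and it remains to show that the only roots of $E_8$ with $|a_0| = 2$ are $\pm Z(E_8)$, after which $|R(E_{8,1})| = |R(E_8)| - 2 = 240 - 2 = 238$.

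First I would locate $E_0$ inside the $E_8$ diagram: in Figure~\ref{fig-E81} the trivalent vertex of the underlying $E_8$ is $E_4$, with arms of lengths $4$, $2$ and $1$, and $E_0$ is the endpoint of the arm of length $4$; in Bourbaki's labelling this is precisely the node carried by the affine vertex of $\widetilde{E_8}$, equivalently the node for which the highest root $Z:=Z(E_8)$ is the associated fundamental weight. Writing $(Y\mid Y') := -(Y\cdot Y')$ for the positive definite form attached to $E_8$, this identification says $(E_i \mid Z) = \delta_{i,0}$ for all simple roots $E_i$; consequently $(\beta\mid Z) = c_0$ for every root $\beta=\sum_i c_i E_i$ of $E_8$, where $c_0$ denotes its $E_0$-coefficient, and in particular $(Z\mid Z)=c_0(Z)=2$.

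The key step is then the claim that among the positive roots of $E_8$ only $Z$ has $c_0\ge 2$. This is immediate from Cauchy--Schwarz, since all roots of $E_8$ have squared length $2$: one gets $c_0^2=(\beta\mid Z)^2\le(\beta\mid\beta)(Z\mid Z)=4$, so $c_0\le 2$, with equality forcing $\beta$ to be proportional to $Z$, hence $\beta=Z$ for positive $\beta$. (If one prefers to avoid the affine-node picture, this can also be checked by hand: the positive roots with $c_0=0$ are the $63$ positive roots of the parabolic subsystem of type $E_7$, and a short analysis of the possible coefficients along the chain $E_4, E_3, E_2, E_1, E_0$ shows that exactly one of the remaining $57$ positive roots has $c_0=2$.) Hence $\{\, Y\in R(E_8)\mid |a_0|\ge 2\,\}=\{\pm Z\}$, so $R(E_{8,1}) = R(E_8)\setminus\{\pm Z\}$ and $|R(E_{8,1})| = 238$.

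The main obstacle is this last claim, i.e.\ controlling the $E_0$-coefficients of all positive roots of $E_8$ at once; the inner-product argument is the clean route around a brute-force enumeration, but it relies on the (standard, yet worth spelling out) fact that $E_0$ is the node of $E_8$ adjacent to the affine node, equivalently the identity $(E_i\mid Z)=\delta_{i,0}$.
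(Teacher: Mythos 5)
Your proof is correct and follows essentially the same route as the paper: both identify $R(E_{8,1})$ with $R(E_8)\setminus\{\pm Z(E_8)\}$ and count $240-2=238$. The only difference is that the paper simply asserts that $Z(E_8)$ is the unique positive root of $E_8$ whose coefficient at the triple vertex (the end of the length-$4$ arm, as in Figure \ref{fig-E81}) equals $2$, whereas you justify this step via the identity $(E_i\mid Z(E_8))=\delta_{i,0}$ and Cauchy--Schwarz, which is a correct and welcome supplement to the argument.
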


\begin{proof} Note that $R(E_{8}) \supset R(E_{8, 1})$ (see Figure \ref{fig-E81}). Let $Z(E_8)$ be the Artin cycle  of $R(E_8)$.
The coefficient of $E_1$ in $Z(E_8)$ is $2$. For each positive root in $R(E_8)$ except $Z(E_8)$, the coefficient of $E_1$ is less than $2$. Therefore, $R(E_8) \setminus \{ \pm Z(E_8) \} = R(E_{8, 1})$ whence 
\[|R(E_{8,1}) =| R(E_8)| -2 = 238.\]
\end{proof}

\begin{prop}\label{prop-E82}
The number of roots in $R(E_{8,2})$ is $212$.
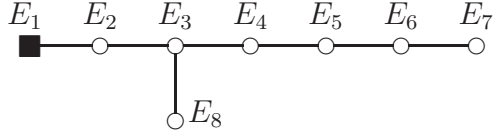
\begin{figure}[h!]
\setlength{\unitlength}{1mm}
\begin{picture}(140,20)(0,20)
\put(40,34){\makebox(0,0){$E_1$}} 
\put(39,28.5){$\blacksquare$}
\put(50,30){\circle{2}}
\put(50,34){\makebox(0,0){$E_2$}}
\put(41,30){\line(1,0){8}} \put(51,30){\line(1,0){8}}
\put(60,30){\circle{2}}
\put(60,34){\makebox(0,0){$E_3$}}
\put(61,30){\line(1,0){8}} \put(70,30){\circle{2}}
\put(70,34){\makebox(0,0){$E_4$}}
\put(71,30){\line(1,0){8}}
\put(80,30){\circle{2}}
\put(80,34){\makebox(0,0){$E_5$}}
\put(81,30){\line(1,0){8}}
\put(90,30){\circle{2}}
\put(90,34){\makebox(0,0){$E_6$}}
\put(91,30){\line(1,0){8}}
\put(100,30){\circle{2}}
\put(100,34){\makebox(0,0){$E_7$}}
\put(60,29){\line(0,-1){8}}
\put(60,20){\circle{2}}
\put(64,21){\makebox(0,0){$E_8$}}
\end{picture}
\caption{Triple diagram of type $E_{8,2}$.}
\label{fig-E82}
\end{figure}
\end{prop}

\begin{proof} Note that $R(E_8)\supset R(E_{8,2})$ (see Figure \ref{fig-E82}). Let us denote a root $\sum a_iE_i$ in $R(E_8)$ by $(a_1,a_2,\ldots,a_8)$. The following positive roots in $R(E_8)$ are not contained in $R(E_{8,2})$.
\begin{eqnarray*}&&(2, 3, 4, 3, 2, 1, 0, 2), (2, 3, 4, 3, 2, 1, 1, 2), (2,3, 4, 3, 2, 2, 1, 2), 
\\ && (2, 3, 4, 3, 3, 2, 1, 2),  (2, 3, 4, 4, 3,2, 1, 2), (2, 3, 5, 4, 3, 2, 1, 2), 
\\ &&(2, 3, 5, 4, 3, 2, 1, 3), (2, 4,5, 4, 3, 2, 1, 2),  (2, 4, 5, 4, 3, 2, 1, 3),
\\ &&(2, 4, 6, 4, 3, 2, 1, 3), (2, 4, 6, 5, 3, 2, 1, 3), (2, 4, 6, 5, 4, 2, 1, 3), 
\\&& (2, 4, 6, 5, 4, 3, 1, 3), (2, 4, 6, 5, 4, 3, 2, 3). \end{eqnarray*}

 We have $14$ positive roots, in total $28$  which are not contained in $R(E_{8,2})$. Hence, $|R(E_{8,2})|=|R(E_8)|-28=212$.
\end{proof}

For each triple {   Dynkin diagram}, we can easily verify the following statements.

\begin{theorem}
{    Let $\Gamma_0$ be a Dynkin diagram of type $A, D$ or $E$. Let $\Gamma$ be the triple Dynkin diagram obtained by replacing a vertex $E_i$ of $\Gamma_0$ by a vertex with weight $3$. Then there exists the highest root
$Z(\Gamma)=\sum a_jE_j \in R(\Gamma_0)$ among the roots in $R(\Gamma_0)$ with $a_i =1$.  
Furthermore, $Z(\Gamma)$ is the Artin cycle of $\Gamma$ and $R(\Gamma) = \{ Y
\in R(\Gamma_0) \mid -Z(\Gamma) \le Y \le Z(\Gamma) \}$.  }
\end{theorem}

%
%

\section{Linear free divisors}\label{sect-free}

A reduced hypersurface $D\subset (\mathbb{C}^n,0)$ is called a free divisor if the module $\textnormal{Der}(- \textnormal{log }D)$ of logarithmic vector fields along $D$ is a locally free module of rank $n$ over $\mathcal{O}_{\mathbb{C}^n,0}$. By Saito's criterion (\cite{saito}), $D$ is a free divisor if and only if there exists a basis $\chi_1,\ldots, \chi_n$ of $\textnormal{Der}(- \textnormal{log }D)$ such that the determinant of the matrix formed by the coefficients of $\chi_i$ is a reduced equation defining $D$. If, in particular, each $\chi_i$ is a weight zero vector field, i.e. of the form $\sum_{i,j} \xi_{ij}x_j \frac{\partial}{\partial x_i}$, for some $\xi_{ij}\in \mathbb{C}$, then $D$ is a \textit{linear free divisor}. In this section, we will recall the examples of linear free divisors arising in representation theory.

  Let $\alpha\colon \textnormal{GL}_n(\mathbb{C}) \times \mathbb{C}^n\rightarrow \mathbb{C}^n$ be a group action given by the right multiplication. Consider the restriction
\[\alpha_x\colon \textnormal{GL}_n(\mathbb{C}) \times \{x\} \rightarrow \mathbb{C}^n\]
for some $x\in \mathbb{C}^n$. Then each element $v\in\mathfrak{gl}_n$ gives rise to a vector field 
{   $\chi_v$} on $\mathbb{C}^n$ defined by 
\begin{equation}\label{eq-inf}\chi_v(x)=d_e\alpha_x(v).\end{equation}
In particular, the elementary matrix $\mathfrak{e}_{ij}$, which has $1$ in the $(i,j)$-th entry, $0$ everywhere else, corresponds to the vector field $x_j\frac{\partial}{\partial x_i }$.

  More generally,  let $G\subseteq \textnormal{GL}_n(\mathbb{C}) $ be a connected algebraic subgroup of dimension $n$. 
Let $\chi_1,\ldots, \chi_n$ be vector fields generating the infinitesimal action of $G$ induced by (\ref{eq-inf}). Then, each $\chi_i$ is of the form $\sum_{i,j} \xi_{ij}x_j \frac{\partial}{\partial x_i}$, for some ${   \xi_{ij} } \in \mathbb{C}$, and corresponds to the matrix  $\sum_{i,j} \xi_{ij}\mathfrak{e}_{ij}$. Let $\Delta$ be the matrix of the coefficients $\{\chi_1,\ldots, \chi_n\}$ with respect to the standard basis $\frac{\partial}{\partial x_1}, \ldots, \frac{\partial}{\partial x_n}$ of $\textnormal{Der}_{\mathbb{C}^n}$. Then, the determinant of $\Delta$ defines the \textit{discriminant} $D$ which consists of points $x$ where $\chi_1,\ldots, \chi_n$ fail to span the tangent space $T_x\mathbb{C}^n$. Furthermore, $D$ is a homogeneous divisor in $\mathbb{C}^n$ of degree $n$. By Saito's criterion, if the determinant of $\Delta$ is reduced then $D$ is a free divisor (necessarily linear) and $\chi_1,\ldots, \chi_n$ form a basis of $\textnormal{Der}(- \textnormal{log }D)$ (\cite[Lemma 2.4]{granger-mond-r-s}).

 Conversely, assume that $D\subset \mathbb{C}^n$ is a linear free divisor and consider the group
\[G_D:=\{A\in \textnormal{GL}_n(\mathbb{C}) \mid A(D)=D\}.\]
Let $G_D^0$ be the {   $n$-dimensional} connected component of $G_D$ containing the identity element. Then {   $G_D^0$ acts on $\mathbb{C}^n$ with a single open orbit and the open orbit corresponds to the complement $\mathbb{C}^n\setminus D$} (\cite[Lemma 2.3]{granger-mond-r-s}).

 In the following {   sub}section, 
 we study examples of free divisors arising in quiver representations.

\subsection{{   Linear free divisors arising from quiver representations}}\label{sect-quiv}

A \textit{quiver} $Q$ is an oriented graph together with sets $Q_0$ and $Q_1$ consisting of the vertices and arrows, respectively. A representation $V$ of a quiver over a field $k$ consists of a vector space $V_v$ of dimension $a_v$ for
each vertex $v\in Q_0$ and a $k$-linear map $V(\alpha)\colon V_{t\alpha}\rightarrow V_{h\alpha}$ for each arrow $\alpha\in Q_1$, where $t\alpha$ is the start and $h\alpha$ is the end of the arrow $\alpha$.

 Let ${\bf a}:=\left(a_v\right)_{v\in Q_0}\in \mathbb{N}^{\mid Q_0 \mid}$ be the dimension vector assigned to $Q$. Then, the $k$-vector space of representations of $Q$ is defined by
\[\textnormal{Rep}(Q ,{\bf a}):=\prod_{\alpha\in Q_1 } \textnormal{Hom}_k\left(V_{t\alpha},V_{h\alpha}\right)\cong \prod_{\alpha\in Q_1 }
\textnormal{Hom}_k\left(k^{a(t\alpha)},k^{a(h\alpha)}\right).\]

The group $\textnormal{GL}(Q ,{\bf a}):=\prod_{v \in Q} \textnormal{GL}_{a_v}(k)$ acts on $\textnormal{Rep}(Q ,{\bf a})$ by 
\begin{equation}\label{g-act}((g_v)_{v\in Q_0}, V)_{\alpha \in Q_1}\mapsto (g_{h\alpha}\cdot A_\alpha\cdot g_{t\alpha}^{-1})_{\alpha\in Q_1}\end{equation}
where $(g_v)_{v\in Q_0}\in \textnormal{GL}(Q ,{\bf a})$. 

 A \textit{morphism} $\phi\colon V \rightarrow W$ of representations is well-defined if there exists a commutative diagram 
\[\xymatrix{ V_{t\alpha} \ar[r]^{V(\alpha)}  \ar[d]_{\phi_{t\alpha}} & V_{h\alpha} \ar[d]^{\phi_{h\alpha}}\\
W_{t\alpha} \ar[r]^{W(\alpha)} & W_{h\alpha}}\]
where $k$-linear maps $\phi_{t\alpha}$ and $\phi_{h\alpha}$, for each $\alpha\in Q_1$. Moreover, $\phi $ is an isomorphism if $\phi_{v}$ is an isomorphism for all $v\in Q_0$. On the other hand, the \textit{direct sum} of two representations $V\in \textnormal{Rep}(Q ,{\bf a})$ and $W\in \textnormal{Rep}(Q ,{\bf b})$ is $V\oplus W\in \textnormal{Rep}(Q ,{\bf a}+{\bf b})$ with $(V\oplus W)_v:=V_v\oplus W_v$ and 
\[(V\oplus W)(\alpha):=\begin{bmatrix}V_\alpha & 0 \\ 0 & W_\alpha \end{bmatrix}\]
for all $v\in Q_0$ and $\alpha\in Q_1$.

\begin{defn}[\cite{buch-mond}] A representation $V'\in \textnormal{Rep}(Q ,{\bf a}')$ is \textit{decomposable} if it is isomorphic to the direct sum of two nontrivial representations, that is, $V'=V\oplus W$ for $V\in \textnormal{Rep}(Q ,{\bf a})$ and $W\in \textnormal{Rep}(Q,{\bf b})$ and ${\bf a}'={\bf a}+{\bf b}$. Otherwise, $V'$ is called indecomposable.
A quiver is of \textit{finite representation type} if it has only finitely many indecomposable representations, up to isomorphism.
\end{defn}

 The \textit{Tits form} of a dimension vector $\textbf{a}$ is given by
\begin{eqnarray*}\langle  \textbf{a},\textbf{a} \rangle &:=& \sum_{v\in Q_0} a_v^2-\sum_{\alpha\in Q_1}a_{t\alpha}a_{h\alpha} \\ &=& 
{    \sum_{v\in Q_0}}\textnormal{dim}_k \textnormal{Hom}_k (V_v,V_v)-{   \sum_{\alpha\in Q_1}}\textnormal{ dim}_k \textnormal{Hom}_k(V_{t\alpha},V_{h\alpha}) \\
&{   =}& {    \textnormal{dim}_k \textnormal{GL}(Q ,{\bf a}) - \textnormal{ dim}_k \textnormal{Rep}(Q ,{\bf a})}.
\end{eqnarray*}

\begin{defn}[Definition 3.2, \cite{buch-mond}] {   The dimension vector $\textbf{a}$ is called a root if $ \textnormal{Rep}(Q ,{\bf a})$ contains an indecomposable representation.} A root  $\bf a$ is called  \textit{sincere} if  $a_i\geq 1$ for all $i$. A root is \textit{real} if $\textnormal{Rep}(Q ,{\bf a})$ contains exactly one orbit of, necessarily isomorphic, indecomposable representations. If a general representation in $\textnormal{Rep}(Q ,{\bf a})$ is indecomposable then $\textbf{a}$ is a \textit{Schur} root. 
\end{defn}

{    The role of $G^0_D$ described in the beginning of Section \ref{sect-free} for the case of quiver representations is played by the quotient $\textnormal{PGL}(Q ,{\bf a}):=\textnormal{GL}(Q ,{\bf a})/Z_0$ where  $Z_0:=\mathbb{C}^*\textnormal{id}\subset \textnormal{GL}(Q ,{\bf a})$ is the 1-dimensional central subgroup acting trivially on $\textnormal{Rep}(Q ,{\bf a})$ under the action given by (\ref{g-act}). And $\textnormal{Rep}(Q ,{\bf a})$ takes the role of $\mathbb{C}^n$ in 
the beginning of Section \ref{sect-free}. Let us assume that $Q$ is a quiver whose underlying graph is a tree and $\textbf{a}$ is a sincere root. We have 
\[\textnormal{dim}_k \textnormal{PGL}(Q ,{\bf a}) = \textnormal{dim}_k \textnormal{Rep}(Q ,{\bf a})\]
if and only if $\langle  \textbf{a},\textbf{a} \rangle =1$, 
and the latter holds if and only if $\textbf{a}$ is a real root (\cite{kac}, see also \cite[Proposition 3.7]{buch-mond}). 
Under the condition that $\langle  \textbf{a},\textbf{a} \rangle =1$, 
let $\Delta$ be the matrix defined in the beginning of Section \ref{sect-free}. 
Let $D$ be the discriminant defined by $\det \Delta$.  
Then $\textnormal{Rep}(Q ,{\bf a}) \setminus D$ is a single open orbit of 
$\textnormal{PGL}(Q ,{\bf a})$ if 
$\textbf{a}$ is a real Schur root (\cite[Section~4]{granger-mond-r-s}). If, in addition to these conditions, 
$\det \Delta$ is reduced, then $D$ is a linear free divisor (\cite[Lemma~2.4]{granger-mond-r-s}). Conversely, if $(Q,\mathbf{a})$ defines a linear free divisor then $\mathbf{a}$ is a real Schur root (\cite[Theorem 3.4]{granger-mond-schulze}).}

{   In the case of Dynkin quivers, we have the following result of \cite{buch-mond}.}

\begin{theorem}[Corollary 5.5, \cite{buch-mond}]\label{thm-buch-mond} Let $Q$ be a Dynkin quiver and $\mathbf{a}$ be a real Schur root. Then the discriminant $D$ {   of} the action of $\textnormal{PGL}(Q ,{\bf a})$ on  $\textnormal{Rep}(Q ,{\bf a})$ is a linear free divisor. 
\end{theorem}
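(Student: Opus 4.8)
The plan is to deduce the statement from Saito's criterion as recalled at the beginning of this section; the only non-formal ingredient will be the reducedness of the relevant determinant. Set $\mathbf a:=\mathbf d$, write $G=\textnormal{GL}(Q,\mathbf a)$ and $V=\textnormal{Rep}(Q,\mathbf a)$, and let $\chi_1,\ldots$ be the linear (weight-zero) vector fields on $V$ generating the infinitesimal $G$-action via (\ref{eq-inf}). The one-parameter subgroup of scalars $\lambda\mapsto(\lambda\,\textnormal{id}_{V_v})_{v\in Q_0}$ acts trivially on $V$, so the image $\bar G\subseteq\textnormal{GL}(V)$ is a connected algebraic subgroup with $\dim\bar G=\dim G-1=\sum_{v\in Q_0}a_v^2-1$. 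Since $\mathbf a$ is a real root, $\langle\mathbf a,\mathbf a\rangle=\sum_{v}a_v^2-\sum_{\alpha}a_{t\alpha}a_{h\alpha}=1$, hence $\dim\bar G=\sum_{\alpha}a_{t\alpha}a_{h\alpha}=\dim V=:n$. Choosing $\chi_1,\ldots,\chi_n$ spanning the infinitesimal action and letting $\Delta$ be the $n\times n$ matrix of their coefficients in the standard basis, $\det\Delta$ is homogeneous of degree $n$ and vanishes at $x$ exactly when $\chi_1(x),\ldots,\chi_n(x)$ fail to span $T_xV$. By Saito's criterion (in the form recalled above and in \cite{granger-mond-r-s}) it then suffices to prove $\det\Delta\not\equiv0$ and that $\det\Delta$ is reduced; if so, $D=V(\det\Delta)$ is a free divisor with $\chi_1,\ldots,\chi_n$ a basis of $\textnormal{Der}(- \textnormal{log }D)$, and it is linear because the $\chi_i$ have linear coefficients.

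For $\det\Delta\not\equiv0$, I would invoke Gabriel's theorem \cite{gabriel-2}: the Dynkin quiver $Q$ has a unique indecomposable representation $M$ with dimension vector $\mathbf a$, and from $\dim\textnormal{Hom}(M,M)-\dim\textnormal{Ext}^1(M,M)=\langle\mathbf a,\mathbf a\rangle=1$ together with indecomposability one gets $\dim\textnormal{Hom}(M,M)=1$ and $\textnormal{Ext}^1(M,M)=0$. Since the $G$-orbit of a representation $N$ has codimension $\dim\textnormal{Ext}^1(N,N)$ in $V$, the orbit $O$ of $M$ is open; as $V$ is irreducible, $O$ is dense, $\Delta$ has rank $n$ on $O$, and $D=V\setminus O$ is a homogeneous hypersurface of degree $n$.

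The main obstacle is the reducedness of $\det\Delta$. Since $D$ is a hypersurface, $\mathcal O_V/(\det\Delta)$ is Cohen--Macaulay, hence has no embedded associated primes, so $\det\Delta$ is reduced if and only if it is generically reduced, i.e. vanishes to order one at the generic point of each irreducible component of $D$. Each such component is the closure of a codimension-one orbit $O'$, and for a representation $N\in O'$ one has $\dim\textnormal{Ext}^1(N,N)=\textnormal{codim}\,O'=1$, so that $O'$ is a minimal degeneration of $O$. One must then check that $\det\Delta$ vanishes simply along $\overline{O'}$: at a generic point $x_0\in O'$ the matrix $\Delta(x_0)$ has corank one, and $\det\Delta$ vanishes to order one there precisely when the first-order variation $\xi\mapsto w_0^{\top}(\partial_\xi\Delta)(x_0)v_0$ (with $v_0$ spanning $\ker\Delta(x_0)$ and $w_0$ the cokernel) is nonzero — which reflects the nondegeneracy, transverse to $O'$, of the obstruction behaviour of $\overline{O'}$, $\textnormal{Ext}^1(N,N)$ being one-dimensional. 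Equivalently, using the structure theory of semi-invariants of a Dynkin quiver, where the ring of semi-invariants is a polynomial ring and each $\overline{O'}$ is the reduced zero locus of a single irreducible semi-invariant, one identifies $\det\Delta$ up to a nonzero scalar with the product of these distinct irreducible semi-invariants. I expect the delicate point to be exactly this control of multiplicities — showing that no irreducible factor of $\det\Delta$ is repeated — where the representation-finiteness of $Q$ is essential in ruling out excess self-extensions; this is carried out in \cite{buch-mond}. Granting it, Saito's criterion applies and $D$ is a linear free divisor.
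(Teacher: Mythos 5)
This statement is not proved in the paper at all: it is imported verbatim as Corollary 5.5 of \cite{buch-mond}, so there is no internal argument to compare against. Your sketch does reproduce the skeleton of the Buchweitz--Mond argument correctly: the scalar one-parameter subgroup acts trivially, $\langle\mathbf a,\mathbf a\rangle=1$ gives $\dim\bar G=\dim\textnormal{Rep}(Q,\mathbf a)$, the open dense orbit exists (for a Dynkin quiver one can get its existence and the vanishing of self-extensions most cleanly from the fact that $\textnormal{Rep}(Q,\mathbf a)$ has only finitely many orbits, rather than from ``indecomposability'' alone --- note that $\langle\mathbf a,\mathbf a\rangle=1$ by itself only gives $\dim\textnormal{Hom}-\dim\textnormal{Ext}^1=1$), and Saito's criterion reduces everything to showing that $\det\Delta$ is a reduced equation.

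The genuine gap is exactly where you place it, but it is not cured by placing it there: the reducedness of $\det\Delta$ --- equivalently, that each irreducible semi-invariant cutting out a codimension-one orbit closure divides $\det\Delta$ exactly once --- is the entire mathematical content of the theorem, and your proposal establishes neither the transversality/corank-one computation you gesture at nor the multiplicity-one statement in the semi-invariant formulation; instead you write ``this is carried out in \cite{buch-mond}.'' Since the statement being proved \emph{is} \cite[Corollary 5.5]{buch-mond}, citing that reference for the decisive step makes the proposal an expanded restatement of the citation rather than a proof. To close the gap one must actually carry out the Buchweitz--Mond multiplicity computation (relating the order of vanishing of $\det\Delta$ along the closure of a codimension-one orbit to $\dim\textnormal{Ext}^1(N,N)=1$ and to the $\textnormal{Hom}$-dimensions of the indecomposable summands of $N$, which are controlled because $Q$ is representation-finite), or give an independent argument for it; as written, that step is assumed, not proved. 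The surrounding reductions (homogeneity of degree $n$, Cohen--Macaulayness giving ``reduced iff generically reduced,'' identification of $D$ with the complement of the open orbit) are fine.
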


In the next {   sub}section, we will show that linear free divisors also arise as discriminants in rational triple quiver representations; hence the roots {    in triple root systems} 
are also {   real} Schur roots. 

Let $\Gamma$ denote one of the trees studied in Proposition{   s} 
\ref{prop:anmk}-\ref{prop:Hn} {   and \ref{prop-E71}-\ref{prop-E82}}.  
\begin{prop}\label{prop-tit} {    For an RTP-tree $\Gamma$}, $\langle  \textbf{a},\textbf{a} \rangle =1$ {   if} 
$Y\in R(\Gamma)$ where $Y=\sum a_iE_i$ with ${\bf a}=(a_0,a_1,\ldots ,a_n)$. 
\end{prop}

\begin{proof} It follows from definitions of the Tits form and the intersection matrix associated to the rational triple quiver. 
\end{proof}

We will denote an RTP-tree $\Gamma$ by $Q$ after assigning an orientation and 
call it a \textit{rational triple quiver}. From now on, we will use the term root for a dimension vector $\bf a$ assigned to a rational triple quiver and (or equivalently for a divisor $Y=\sum a_iE_i$ in $R(\Gamma )$). 

By Proposition \ref{prop-subtree}, any subquiver of a rational triple quiver is a rational triple quiver or a Dynkin quiver. So, each root is a real root by {   the results mentioned above}. In the next {   sub}section, we will show that linear free divisors also arise as discriminants in rational triple quiver representations; hence the roots {    in triple root systems} are also {   real} Schur roots.

\subsection{Rational triple quivers and linear free divisors}\label{subsect-lfd}

A vertex $v\in Q_0$ is called a \textit{source} (resp. \textit{sink}) if there is no arrow ending (resp. starting) at $v$. {   Let  $\mathbf{a}=(a_0,a_1,\ldots,a_n)$ be 
a dimension vector.} 


\begin{defn}[\cite{bern-gel-pono},\cite{granger-mond-schulze}] Let $v\in Q_0$ be a source with $a_v\leq \sum_{v\rightarrow v_i\in Q_1} a_{v_i}$. The \textit{reflection functor} with respect to $v$ is a transformation which produces a quiver $Q^*$  with $a^*_v=\sum_{v\rightarrow v_i\in Q_1} a_{v_i}-a_v$ and $a_{v_i}^*=a_{v_i}$ for $v_i\neq v$, all arrows involving $v$ are reversed. So, $v^*$ is a sink and $a_v^*\geq \sum_{v^*\rightarrow v_i^*\in Q_1^*} a_{v_i}^*$. 

 This correspondence can be extended to a correspondence between open subsets $\textnormal{Rep}'(Q ,{\bf a})\subset \textnormal{Rep}(Q ,{\bf a})$ and $\textnormal{Rep}'(Q^* ,{\bf a}^*) \subset \textnormal{Rep}(Q^* ,{\bf a}^*)$ {    defined in 
 \cite[Theorem~3.26]{granger-mond-schulze} } 
 as follows. Let $V\in \textnormal{Rep}'(Q ,{\bf a})$  and denote its image under a reflection by $V^*\in  \textnormal{Rep}'(Q ,{\bf a}^*)$. Then $V^*_v$ is defined be the cokernel of the map 
\[ (f_{v\rightarrow v_i})_{ v\rightarrow v_i\in Q_1} \colon V_v \rightarrow \bigoplus_{v\rightarrow v_i\in Q_1} V_{v_i} \] 
and $V_{v_i}=V_{v_i}^*$ for $v_i\neq v$. 

 A similar definition exists in the case of a sink. If $v\in Q_0$ is a sink with $a_v\geq \sum_{{    v\leftarrow v_i\in Q_1}} a_{v_i}$, $Q^*$ is obtained by reversing all arrows involving $v$ and setting $\mathbf{a}^*$ as above. The corresponding vertex $v^*$ is a source and $a_v^*\leq \sum_{v^*\rightarrow v_i^*\in Q_1^*} a_{v_i}^*$. In this case, $V^*\in  \textnormal{Rep}'(Q^*,{\bf a}^*)$ is obtained by setting $V^*_v$ as the kernel of the map 
\[ (f_{v\leftarrow v_i})_{ v\leftarrow v_i\in Q_1} \colon \bigoplus_{v\rightarrow v_i\in Q_1} V_{v_i}  \rightarrow V_v\] 
and $V_{v_i}=V_{v_i}^*$ for $v_i\neq v$.
 \end{defn}

  It was shown by Sato and Kimura that reflection functors (or \textit{castling transformations} as they call them) give a one-to-one correspondence between relative invariants of representations. Moreover, if $D$ is a linear free divisor coming from a quiver representation $V$ then the discriminant $D^*$ of $V^*$ is also a linear free divisor {   (\cite[Theorem 3.26]{granger-mond-schulze}).} Using this result and Theorem \ref{thm-buch-mond}, we will prove that if $Q$ is a rational triple quiver 
  {   and ${\bf a}$ is a positive root  in the corresponding triple root system $R(\Gamma)$, } 
 then its representation {    space $\textnormal{Rep}(Q ,{\bf a})$} also yields a linear free divisor. First we prove the following lemma to simplify our claims.

\begin{lemma}\label{lem-An} Let $Q$ be a quiver of type $A_n$ (see Figure \ref{fig-An}) with the root $\mathbf{a}=(1,1,\ldots,1)\in\mathbb{N}^n$ and any chosen orientation. Then any {    general} representation $V\in \textnormal{Rep}(Q ,{\bf a})$ can be transformed into the trivial representation of the subquiver
\begin{figure}[h!]
\setlength{\unitlength}{1mm}
\begin{picture}(120,10)(0,25)
\put(40,26){\makebox(0,0){$0$}}
\put(40,33){\makebox(0,0){$v_1$}}
\put(40,30){\circle{2}}
\put(50,30){\circle{2}}
\put(50,26){\makebox(0,0){$0$}}
\put(50,33){\makebox(0,0){$v_2$}}
\put(41,30){\line(1,0){8}}
\put(51,30){\line(1,0){8}}
\put(60,30){\circle{2}}
\put(60,26){\makebox(0,0){$0$}}
\put(60,33){\makebox(0,0){$v_3$}}
\put(65,30){\makebox(0,0){$\cdots$}}
\put(70,30){\circle{2}}
\put(70,26){\makebox(0,0){$0$}}
\put(71,33){\makebox(0,0){$v_{n-1}$}}
\put(71,30){\line(1,0){8}}
\put(80,30){\circle{2}}
\put(80,26){\makebox(0,0){$1$}}
\put(80,33){\makebox(0,0){$v_n$}}
\end{picture}
\end{figure}
\newpage \noindent by successive reflection functors; in other words, into $\{0\}= \textnormal{Rep}(Q^* ,{\bf a}^*)$ where $Q^*$ is the quiver consisting of just one vertex $v_n$ with $\mathbf{a}^*=1$.
\end{lemma}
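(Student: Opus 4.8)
The plan is to proceed by induction on $n$, the number of vertices in the $A_n$ quiver, using the reflection functors to successively "peel off" leaf vertices while driving their dimension down to zero. For the base case $n = 1$ there is nothing to prove: the quiver is a single vertex $v_1$ with dimension $a$, which is already in the desired form. For the inductive step, I would first locate an extremal vertex of the underlying $A_n$ graph — say $v_1$ — which is either a source or a sink in $Q$ (a leaf of a path is always one or the other). Apply the reflection functor at $v_1$: since $v_1$ is adjacent to only one vertex, namely $v_2$, the new dimension at $v_1$ becomes $a^*_{v_1} = a_{v_2} - a_{v_1} = a - a = 0$, because we have assumed $\mathbf{a} = (a, a, \ldots, a)$. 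The reflection functor reverses the single arrow incident to $v_1$ and replaces $V_{v_1}$ by the appropriate cokernel (if $v_1$ was a source) or kernel (if $v_1$ was a sink), leaving all other vertex spaces and all other maps untouched.

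After this single reflection, the vertex $v_1$ carries the zero space, so it is effectively detached: the representation restricted to the remaining vertices $v_2, \ldots, v_n$ is a representation of an $A_{n-1}$ quiver (with whatever orientation results, which is immaterial since the lemma is stated for an arbitrary orientation) with the constant dimension vector $(a, a, \ldots, a)$. By the inductive hypothesis applied to this $A_{n-1}$ subquiver, there is a sequence of reflection functors transforming it into the trivial representation supported only at $v_n$ with dimension $a$; since these later reflections never involve $v_1$ (which already has dimension $0$ and stays so), composing them with the first reflection gives the desired sequence for $A_n$. This completes the induction and shows $V$ is transformed into $\{0\} = \textnormal{Rep}(Q^*, \mathbf{a}^*)$ with $Q^*$ the one-vertex quiver at $v_n$ and $\mathbf{a}^* = a$.

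One subtlety to check carefully is that the reflection functor at $v_1$ is "legal", i.e. that $v_1$ genuinely is a source or a sink at the moment we apply it. Since $\Gamma$ is a tree and $A_n$ is a path, $v_1$ has valency $1$, so the unique arrow at $v_1$ either starts there (making it a source, if there are no other arrows into it — which there cannot be) or ends there (making it a sink); either way the reflection is defined. A second point is that after reflecting, we should make sure no map into or out of the remaining $A_{n-1}$ part has been disturbed — but the reflection functor only changes data at the reflected vertex and reverses arrows incident to it, so the restriction to $v_2, \ldots, v_n$ is literally the original representation there, and in particular the constancy of the dimension vector on $v_2, \ldots, v_n$ is preserved. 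The main (and essentially only) obstacle is bookkeeping: being careful that the choice of extremal vertex is always available and that the inductive hypothesis is invoked on a genuinely smaller $A_{n-1}$ quiver with the same constant dimension; there is no real analytic or combinatorial difficulty beyond this.
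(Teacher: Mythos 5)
Your proof is correct and follows essentially the same route as the paper: reflect at the leaf $v_1$, use the constancy of $\mathbf{a}$ to see its dimension drops to $0$, and iterate along the path toward $v_n$. The paper simply phrases this as applying reflections at $v_1, v_2, \ldots, v_{n-1}$ in order, whereas you package the same argument as an induction on $n$; both gloss the same minor bookkeeping point (treating a zero-dimensional vertex as removed so the next reflection is legal), which the lemma's statement already sanctions.
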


\begin{proof}  For any given orientation, the vertex $v_1$ at the left end is either a sink or a source. In either case, by applying the appropriate reflection with respect to $v_1$, we get a subrepresentation $V^*$ of $V$ with  $V^*_{v_1}=0$, $V^*_{v_i}=V_{v_i}$ and $a_{v_1}=0$, $a_{v_i}=1$ for all $i=2,\ldots, n$. Therefore, the claim easily follows by applying reflection functors at the vertices $v_1$, $v_2$ and $v_{n-1}$ in the given order. 
\end{proof}

\begin{example}\label{ex-Q-Bmn} Let us consider the rational triple quiver of type $B_{m,n}$ with any given orientation and the root $\mathbf{a}=(a_1,\ldots, a_n,1,\ldots,1)$. Then the discriminant in $\textnormal{Rep}(Q ,{\bf a})$ is a linear free divisor. See Figure \ref{fig-Q-Bmn} for the quiver with an {   arbitrary} orientation.

\begin{figure}[h!]
\setlength{\unitlength}{1mm}
\begin{picture}(125,15)(-6,15)
\put(30,18){\makebox(0,0){$1$}}
\put(30,22){\circle{2}}
\put(31,22){\vector(1,0){5}}
\put(40,22){\circle{2}}
\put(35,25){\makebox(0,0){$A_m$}}
\put(40,18){\makebox(0,0){$1$}}
\put(31,22){\line(1,0){8}}
\put(45,22){\makebox(0,0){$\cdots$}}
\put(50,22){\circle{2}}
\put(55,25){\makebox(0,0){$A_1$}}
\put(57,22){\vector(-1,0){3}}
\put(50,18){\makebox(0,0){$1$}}
\put(51,22){\line(1,0){8}}
\put(58.8,20.5){$\blacksquare$}
\put(61,22){\vector(1,0){5}}
\put(60,18){\makebox(0,0){$1$}}
\put(61,22){\line(1,0){8}}
\put(70,22){\circle{2}}
\put(70,18){\makebox(0,0){$a_2$}}
\put(71,22){\vector(1,0){5}}
\put(71,22){\line(1,0){8}}
\put(80,22){\circle{2}}
\put(80,18){\makebox(0,0){$ a_3$}}
\put(97,22){\vector(-1,0){3}}
\put(85,22){\makebox(0,0){$\cdots$}}
\put(90,22){\circle{2}}
\put(90,18){\makebox(0,0){$a_{n-1}$}}
\put(91,22){\line(1,0){8}}
\put(100,22){\circle{2}}
\put(100,18){\makebox(0,0){$ a_n$}}
\put(70,32){\circle{2}}
\put(70,31){\line(0,-1){8}}
\put(74,32){\makebox(0,0){$a_1$}}
\put(70,23){\vector(0,1){5}}
\end{picture}
\caption{A representation of a rational triple quiver of type $B_{m,n}$.}
\label{fig-Q-Bmn}
\end{figure}

{    As in the proof of Lemma \ref{lem-An},} $Q$ can transformed into a Dynkin quiver $Q^*$  of type $D_{n+1}$ with the root $\mathbf{a}^*=(a_1,\ldots,a_n, {   1}, 0,\ldots,0)$. For example, the quiver in 
Figure \ref{fig-Q-Bmn} is transformed into the quiver in Figure \ref{fig-Q*-Bmn}.

\begin{figure}[h!]
\setlength{\unitlength}{1mm}
\begin{picture}(125,20)(-6,15)
\put(30,18){\makebox(0,0){$0$}}
\put(30,22){\circle{2}}
\put(40,22){\circle{2}}
\put(40,18){\makebox(0,0){$0$}}
\put(31,22){\line(1,0){8}}
\put(45,22){\makebox(0,0){$\cdots$}}
\put(50,22){\circle{2}}
\put(50,18){\makebox(0,0){$0$}}
\put(51,22){\line(1,0){8}}
\put(58.8,20.5){$\blacksquare$}
\put(60,18){\makebox(0,0){$1$}}
\put(61,22){\line(1,0){8}}
\put(61,22){\vector(1,0){5}}
\put(70,22){\circle{2}}
\put(70,18){\makebox(0,0){$a_2$}}
\put(71,22){\vector(1,0){5}}
\put(71,22){\line(1,0){8}}
\put(80,22){\circle{2}}
\put(80,18){\makebox(0,0){$ a_3$}}
\put(97,22){\vector(-1,0){3}}
\put(85,22){\makebox(0,0){$\cdots$}}
\put(90,22){\circle{2}}
\put(90,18){\makebox(0,0){$a_{n-1}$}}
\put(91,22){\line(1,0){8}}
\put(100,22){\circle{2}}
\put(100,18){\makebox(0,0){$ a_n$}}
\put(70,32){\circle{2}}
\put(70,31){\line(0,-1){8}}
\put(74,32){\makebox(0,0){$a_1$}}
\put(70,23){\vector(0,1){5}}
\end{picture}
\caption{A representation of a rational triple quiver of type $B_{m,n}$.}
\label{fig-Q*-Bmn}
\end{figure}

 An easy calculation shows that $\langle  \textbf{a},\textbf{a} \rangle=1$ if and only if $\langle  \textbf{a}^*,\textbf{a}^* \rangle=1$; in other words, $\mathbf{a}$ is a root if and only if $\mathbf{a}^*$ is. Therefore, the discriminant $D$ in  $\textnormal{Rep}(Q ,{\bf a})$ is a linear free divisor since $D^*$ in $\textnormal{Rep}(Q^*,{\bf a}^*)$ is a linear free divisor  (cf. Theorem \ref{thm-buch-mond} and \cite[Proposition 3.26]{granger-mond-schulze}). Moreover, we have
\[\textnormal{Rep}(Q ,{\bf a})\cong \textnormal{Rep}(Q^* ,{\bf a}^*)\times \mathbb{C}^m,\]
and if $D^*=V(f)$ then $D=V(A_1A_2\cdots A_mf)$ where 
{    $A_i$ is the $i$-th projection 
$\textnormal{Rep}(Q ,{\bf a})\cong \textnormal{Rep}(Q^* ,{\bf a}^*)\times 
\mathbb{C}^m\rightarrow \mathbb{C}^m \rightarrow \mathbb{C}$ 
for $1 \le i  \le m$.} So, the divisors $D$ and $D^*$ are related as follows
\begin{equation}\label{eq-D-D*} D=(D^*\times \mathbb{C}^m)\bigcup \left(\bigcup_{i=1}^m \mathbb{C}^{\textnormal{dim}D^*+1}\times V(A_i)\right), \end{equation} 
{    where $V(A_i) := \{ (z_1, \ldots, z_m) \in \mathbb{C}^m \mid z_i=0 \}$ }
(cf. \cite[Proposition 3.29]{granger-mond-schulze}). 
\end{example}

\begin{theorem}[Theorem 3.9, \cite{granger-mond-schulze}]\label{thm-tree} Let $Q$ be any quiver and $\mathbf{a}$ a root associated with $Q$. If the discriminant in the representation space of $Q$ is a linear free divisor then $Q$ is a tree.
\end{theorem}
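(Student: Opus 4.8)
The plan is to argue by contradiction, extracting a numerical contradiction from the description of $D$ as the complement of a dense $\textnormal{GL}(Q,\mathbf{a})$-orbit. First I would reduce to the case of a connected quiver: if $Q=\coprod_j Q_j$ then $\textnormal{Rep}(Q,\mathbf{a})=\prod_j\textnormal{Rep}(Q_j,\mathbf{a}_j)$, the group $\textnormal{GL}(Q,\mathbf{a})$ splits accordingly, and (as in Example~\ref{ex-Q-Bmn}) $D$ is a linear free divisor precisely when the discriminant of each factor is; so it suffices to treat connected $Q$. For such a $Q$ one has $|Q_1|\geq|Q_0|-1$ always, with equality exactly when $Q$ is a tree, so the goal becomes $|Q_1|\leq|Q_0|-1$.

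Next I would unwind what ``linear free divisor'' gives. Since $\textnormal{GL}(Q,\mathbf{a})$ preserves the discriminant $D$, its (connected) identity component sits inside $G_D^0$, so by \cite[Lemma 2.3]{granger-mond-r-s} the generic $\textnormal{GL}(Q,\mathbf{a})$-orbit is dense and $D$ is its complement. Writing $h=\det\Delta$ for the reduced defining equation coming from Saito's criterion, $h$ is homogeneous of degree $n:=\dim\textnormal{Rep}(Q,\mathbf{a})=\sum_{\alpha\in Q_1}a_{t\alpha}a_{h\alpha}$, since $\Delta$ is an $n\times n$ matrix of linear forms with $\det\Delta\not\equiv0$. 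Because $h$ cuts out the complement of a dense orbit of the connected group $\textnormal{GL}(Q,\mathbf{a})=\prod_v\textnormal{GL}_{a_v}$, each irreducible factor of $h=h_1\cdots h_r$ is a relative invariant, with character of the form $\prod_v\det_v^{m_{i,v}}$; as the diagonal scalars act trivially on $\textnormal{Rep}(Q,\mathbf{a})$, these characters lie in the rank-$(|Q_0|-1)$ lattice $\{(m_v)_v:\sum_v m_v a_v=0\}$, and the classical fact that the basic relative invariants of a prehomogeneous vector space have linearly independent characters forces $r\leq|Q_0|-1$. Reducedness of $h$ gives in addition $\sum_{i=1}^r\deg h_i=n$.

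The last and decisive step is to promote this to $|Q_1|\leq|Q_0|-1$. Here one has to bring in the representation theory of the quiver: using the description of quiver semi-invariants (in the style of Schofield and Derksen--Weyman) one would show that the divisorial part of the complement of the dense orbit forces, for each arrow, at least one independent irreducible factor of the reduced equation $h$ --- equivalently $r\geq|Q_1|$ --- and combining this with $r\leq|Q_0|-1$ finishes the proof. I expect this to be the main obstacle: one must rule out the possibility that a single high-degree irreducible semi-invariant simultaneously governs the degeneration along several distinct arrows. In the tree case the two counts $r$ and $|Q_1|$ always coincide (for $A_n$ with dimension vector $(1,\dots,1)$ the equation is a product of $|Q_1|$ coordinates; for the trivalent $D_4$ with dimension vector $(1,1,1,2)$ it is a product of the three maximal minors), and the real content of the theorem is that reducedness of the discriminant forces this coincidence --- which becomes impossible as soon as $Q$ contains a cycle.
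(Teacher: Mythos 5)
The paper does not actually prove this statement: it is imported verbatim as Theorem 3.10 of \cite{granger-mond-schulze}, so there is no internal proof to compare yours with; your proposal has to be judged on its own as a proof of the cited result. Within it, the reduction to connected $Q$ and the bound $r\le |Q_0|-1$ are fine (and standard): the basic relative invariants of a prehomogeneous vector space have linearly independent characters, each character is trivial on the kernel of the action, and for connected $Q$ the kernel contains the global scalars, cutting the character lattice down to rank $|Q_0|-1$. The problem is the ``decisive step'' $r\ge |Q_1|$: you do not prove it, you defer it to an unspecified argument with Schofield/Derksen--Weyman semi-invariants, and you yourself flag it as the main obstacle. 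That step is exactly where the whole content of the theorem sits, so as written the proposal reduces the theorem to an unproven claim of essentially the same difficulty.

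Moreover, the claim cannot come from the orbit geometry alone, so any proof of it must genuinely exploit freeness (reducedness of $\det\Delta$ and $\deg h=\dim\textnormal{Rep}(Q,\mathbf{a})$), which your sketch never does. For instance, take the quiver with two vertices and two parallel arrows and dimension vector $(1,2)$: this is a root ($\langle\mathbf{a},\mathbf{a}\rangle=1$), the group acts with a dense orbit, and the complement of the open orbit is the irreducible quadric $\{\det(v_1\,v_2)=0\}$ in $\mathbb{C}^4$, so the divisorial complement has only one component although $|Q_1|=2$. Of course this discriminant is not a linear free divisor (degree $2\neq 4$), so it does not contradict your intended implication, but it shows that ``each arrow forces its own irreducible factor'' is false for general prehomogeneous quiver settings and can only hold after the linear-free-divisor hypothesis is threaded through the semi-invariant analysis --- precisely the missing argument. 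Two smaller points: the reduction to connected components needs a justification that freeness of the product discriminant forces freeness of each factor (and a remark on components with no arrows), and what that reduction yields is that $Q$ is a forest, i.e.\ a tree on each connected component.
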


 In the following theorem, we present a new case where the converse of Theorem \ref{thm-tree} is true.

\begin{theorem}\label{prop-lfd} Let $Q$ be a rational triple quiver and $\mathbf{a}$ be a sincere triple root {    in the corresponding triple root system $R(\Gamma)$.} Then, the discriminant in the representation space $\textnormal{Rep}(Q,\mathbf{a})$ is a linear free divisor independently of the orientation. 
\end{theorem}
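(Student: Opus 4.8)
The plan is to reduce, in every case, to the Buchweitz--Mond theorem (Theorem~\ref{thm-buch-mond}) by means of reflection functors, exploiting that the linear free divisor property is preserved under castling transformations (\cite[Proposition 2.10]{granger-mond-schulze}). Concretely, I would show that for each rational triple quiver $Q$ from Artin's classification (the trees of Propositions~\ref{prop:anmk}--\ref{prop:Hn} together with $E_{7,1}$, $E_{8,1}$, $E_{8,2}$) and each sincere root $\mathbf a$, a finite sequence of reflection functors carries $(Q,\mathbf a)$ to a pair $(Q_0,\mathbf a_0)$ in which $Q_0$ is a Dynkin quiver, after discarding the vertices on which the dimension vector has become $0$, and $\mathbf a_0$ is a real root of $Q_0$. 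Then $D$ is a linear free divisor because $D_0$ is, by Theorem~\ref{thm-buch-mond} and \cite[Proposition 2.10]{granger-mond-schulze}. Since reflection functors are available for any orientation and any two orientations of a tree are related by a chain of them, the same conclusion then holds for every orientation, which is exactly the ``independent of the orientation'' clause.

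The easy cases come first. For $E_{7,1}$, $E_{8,1}$ and $E_{8,2}$ the underlying graph is literally the Dynkin graph $E_7$, $E_8$, $E_8$ respectively, and by Propositions~\ref{prop-E71}--\ref{prop-E82} every root of these triple quivers is already a root of the ambient $E_7$ or $E_8$ root system; hence $\mathbf a$ is a real root of the Dynkin quiver $Q$ itself and Theorem~\ref{thm-buch-mond} applies with no reduction at all. The same works for $H_n$ with $5\le n\le 8$, whose underlying graph is $D_5,E_6,E_7,E_8$ respectively (Lemma~\ref{lemma:H5} and the inclusion $R(H_n)\subset R(E_8)$).

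Next come the ``collapsing'' cases. For $A_{n,m,k}$ the unique sincere root is the Artin cycle $(1,\dots,1)$, since every root lies below $Z(\Gamma)$; applying Lemma~\ref{lem-An} to each of the three $A$-type arms collapses $Q$ to an orientation of $D_4$ with dimension vector $(1;1,1,1)$, a real root. For $B_{m,n}$, $C_{m,n}$, $D_{n,5}$ and $F_n$ the proofs of the corresponding propositions already show that a sincere root is identically $1$ on its $A$-type tail; collapsing that tail by Lemma~\ref{lem-An}, exactly as in Example~\ref{ex-Q-Bmn}, reduces $Q$ to a Dynkin quiver of type $D_{n+1}$, $D_{m+n+1}$, $E_6$, $E_7$ respectively. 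In each such reduction the reflected dimension vector remains supported on the surviving vertices and keeps Tits form $1$, because reflection functors preserve the Tits form, so by Proposition~\ref{prop-tit} it is a root of the resulting Dynkin diagram; Theorem~\ref{thm-buch-mond} then closes these cases.

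The remaining and hardest case is $H_n$ with $n\ge 9$: now the underlying graph is no longer Dynkin and, by Lemmas~\ref{lemma:type3}--\ref{lem-listHn}, a sincere root can take the values $2$ and $3$, so Lemma~\ref{lem-An} does not apply directly. Here I would argue by induction on the number of vertices. Using the explicit unimodal shape of a sincere root ($a_0=a_1=a_{n-1}=1$, $a_3\le 3$, consecutive coefficients differing by at most $1$), a bounded number of reflections at interior vertices brings some leaf and its neighbour both to the value $1$, after which one further reflection annihilates that leaf; the support then drops to a proper subtree, which by Proposition~\ref{prop-subtree} is a rational $m'$-tuple tree with $m'\le 3$, and one concludes by induction, the base being the Dynkin cases $n\le 8$. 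The main obstacle is precisely the bookkeeping of this reduction: the reflections used must be \emph{BGP} reflection functors, hence performed at the current sink or source, so one must check that the required sequence of vertices can be made admissible — for this one again uses that all orientations of a tree are reflection-equivalent and that castling is insensitive to orientation. Once the reduction to a Dynkin quiver with a real root is in place in every case, Theorem~\ref{thm-buch-mond} together with \cite[Proposition 2.10]{granger-mond-schulze} finishes the proof, and independence of the orientation is automatic.
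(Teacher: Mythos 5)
Your overall strategy is the same as the paper's: reduce to the Buchweitz--Mond theorem (Theorem~\ref{thm-buch-mond}) by reflection functors, using \cite[Proposition 2.10]{granger-mond-schulze} to transport the linear-free-divisor property, and your treatment of $E_{7,1},E_{8,1},E_{8,2}$, of $A_{n,m,k}$, and of the series $B_{m,n},C_{m,n},D_{n,5},F_n$ (sincere roots are identically $1$ on the $A$-type tail, which Lemma~\ref{lem-An} collapses) matches the paper's proof and Table~\ref{tablea}, up to harmless slips (the collapsed type for $C_{m,n}$ is $D_{m+1}$, not $D_{m+n+1}$).

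The genuine gap is exactly where you flag it: the case $H_n$ with $n\ge 9$. Your inductive scheme rests on the claim that ``a bounded number of reflections at interior vertices brings some leaf and its neighbour both to the value $1$,'' after which the leaf is annihilated; this is asserted, not proved, and as stated it is not the mechanism that actually works for the sincere roots with coefficients $2$ and $3$. For instance, for a type-4 root $(1;1,2,3,\dots,3,1)$ the end leaf never reaches value $1$ next to a $1$; what happens instead (and what must be checked) is that after reflecting at $E_{n-1}$ and then at $E_{n-2}$ the pair becomes $(2,2)$ and the leaf dies, or one reduces type 4 to type 3 and type 3 to type 2, which is precisely the explicit case analysis the paper carries out on the five families of Lemma~\ref{lem-listHn}, including the orientation cases (a)--(d) needed to make each reflection admissible (performed at a sink or source) without disturbing the vertex of weight $3$. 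Relatedly, your appeal to ``any two orientations of a tree are reflection-equivalent'' does not by itself give independence of orientation, since each such reflection changes the dimension vector; what is actually needed, and what the paper verifies, is that every reduction step can be performed for every starting orientation (Lemma~\ref{lem-An} is orientation-free, and the $H_n$ steps are checked in all orientation configurations). So your proposal is the right approach, but without the explicit $H_n$ bookkeeping it does not yet constitute a proof of the hardest case, which is the substantive content of the paper's argument.
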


\begin{proof} We notice that the underlying graphs $E_{7,1}$, $E_{8,1}$ and $E_{8,2}$ are of type $E_7$, $E_8$ and $E_8$, respectively. Since $\textbf{a}$ is a root, the claim follows from Theorem \ref{thm-buch-mond}. 

 As for the series  $A_{n,m,k}$, $B_{m,n}$, $C_{m,n}$, $D_{n,5}$ and $F_{n}$ (see Figures \ref{fig-Anmk}-\ref{fig-Fn}), the general forms of the roots are indicated in Table \ref{tablea}. Each of them necessarily consists of a subquiver of type $A_{n'}$ with $1$ at its vertices for some $n'$ (cf. the results in Section \ref{sect-tripleroots}). In other words, $\textbf{a}=\textbf{a}_1\oplus \textbf{a}_2$ for $\textbf{a}_1=(1,\ldots, 1)\in \mathbb{N}^{n'}$ and $\textbf{a}_2\in \mathbb{N}^{\ell}$ for some $\ell\geq 3$. We have already observed that a rational triple quiver of type $B_{m,n}$ with such dimension vector yields a linear free divisor in Example \ref{ex-Q-Bmn}. Similarly, by Lemma \ref{lem-An}, the representations of $A_{m,n,k}$, $C_{m,n}$, $D_{n,5}$ and $F_{n}$ can be transformed into the representations of the Dynkin quivers of types {   $A_{n+k+1}$}, $D_m$, $E_6$ and $E_7$, in the given order. (In fact, the linear free divisors arising from them are related by equations similar to (\ref{eq-D-D*}).)  See Table \ref{tablea} for a summary in which $\underline{1}$ represents the vertex of weight $-3$.  Moreover,  $\textbf{a}$ is a root if and only if $\textbf{a}_2$ is a root. Therefore, in each case the discriminant is linear free divisor by Theorem \ref{thm-buch-mond} and \cite[Proposition 3.26]{granger-mond-schulze}. 

\begin{table}[ht!]
\caption[]{Rational triple quivers}
\label{tablea}
\begin{center}
\begin{tabular}{|m{0.875cm}|m{4.85cm}| m{4.85cm}|m{1.1cm}|} \hline
\multirow{2}{*}{Label} & \multirow{2}{*}{\makebox[4cm][c]{$\mathbf{a}$}} & \multirow{2}{*}{\makebox[4.6cm][c]{$\mathbf{a}^*$}} & Dynkin \\
& & & type  \\
\hline
$A_{m,n,k}$ & $\xymatrixcolsep{3pt}\xymatrixrowsep{-2pt}\xymatrix{ & 1 & ... &   1 & \underline{1} & 1 & ... & 1   \\ & & & & & & & \\&  &  & &  1   &  & & \\ &  & & & & & & \\& & & & . & & & \\ && &  &  . && &  \\ && & & . & & & \\ & & & & & & & \\  && & & 1 &  & &}$  
& 
$\xymatrixcolsep{3pt}\xymatrixrowsep{-2pt}\xymatrix{ 0 & ... &   0 & \underline{1} & 1 & ... & 1   \\ & & & & & & \\ &  & &  1   &  & & \\   & & & & & & \\ & & & . & & & \\ & &  &  . && &  \\ & & & . & & & \\  & & & & & & \\  & & & 1 &  & &}$  & ${   A_{n+k+1}}$
 \\\hline 
$B_{m,n}$ & $\xymatrixcolsep{2pt}\xymatrixrowsep{0pt}\xymatrix{ & & & & a_1 & & & & \\   1 & ... & 1 & \underline{1} & a_2 & a_3 & ... & a_n}$
& 
$\xymatrixcolsep{2pt}\xymatrixrowsep{0pt}\xymatrix{ & & & & a_1 & & & & \\   0 & ... & 0 & \underline{1} & a_2 & a_3 & ... & a_n }$ & $D_{n+1}$ 
  \\\hline
$C_{m,n}$ & $\xymatrixcolsep{1.5pt}\xymatrixrowsep{0pt}\xymatrix{ & & & &  & & a_1\hskip12pt  &\\ 1 & ...  & 1 & \underline{1} & a_2  & ...  & a_{m-1} & a_{m}}$ 
& $\xymatrixcolsep{1.5pt}\xymatrixrowsep{0pt}\xymatrix{ & & & &  & & a_1\hskip12pt &\\ 0 & ...  & 0 & \underline{1} & a_2 & ...  &a_{m-1} & a_m}$ & $D_{m+1}$
 \\\hline
$D_{n,5}$ & $\xymatrixcolsep{2pt}\xymatrixrowsep{0pt}\xymatrix{ & & & & & a_1 & & \\1 & ... & 1 & \underline{1} & a_2 & a_3 & a_4 & a_5}$ 
& $\xymatrixcolsep{2pt}\xymatrixrowsep{0pt}\xymatrix{ & & & & & a_1 & & \\0 & ... & 0 & \underline{1} & a_2 & a_3 & a_4 & a_5}$ & $E_6$
\\\hline
$F_n$ & $\xymatrixcolsep{1.5pt}\xymatrixrowsep{0pt}\xymatrix{ & && & & & a_1 & &\\ 1 & ... & 1 & \underline {1} & a_2 & a_3 & a_4 & a_5 & a_6}$ 
&
$\xymatrixcolsep{1.5pt}\xymatrixrowsep{0pt}\xymatrix{ & && & & & a_1 & &\\ 0 & ... & 0 & \underline {1} & a_2 & a_3 & a_4 & a_5 & a_6}$ 
& $E_7$
 \\\hline
\end{tabular}
\end{center}
\end{table}

  Now, we prove the claim for $H_n$ explicitly by considering the sincere roots which are listed in the proof of Lemma \ref{lem-listHn}. Note that we do not indicate the directions of the arrows in the figures as our claim is independent of the given orientation. 

\begin{enum}
\item[1.] $\mathbf{a}=(1,1,\ldots,1)\in\mathbb{N}^n$. A quiver representation in $\textnormal{Rep}(Q ,{\bf a})$ is given in Figure \ref{fig-Q-Hn1} where $Q_1=\{A_0,A_1,\ldots,A_{n-1}\}$. It is easy to see that the discriminant is the normal crossing divisor
\[D=\{A_0A_1\cdots A_{n-1}=0\}\subset \textnormal{Rep}(Q ,{\bf a})\cong \mathbb{C}^{n}.\]
Hence, it is a linear free divisor.

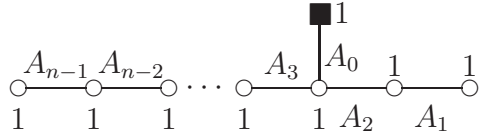
\begin{figure}[h!]
\setlength{\unitlength}{1mm}
\begin{picture}(125,20)(5,15)
\put(40,22){\circle{2}}
\put(45,25){\makebox(0,0){$A_{n-1}$}}
\put(40,18){\makebox(0,0){$1$}}
\put(45,25){\makebox(0,0)}
\put(41,22){\line(1,0){8}}
\put(50,18){\makebox(0,0){$1$}}
\put(55,25){\makebox(0,0){$A_{n-2}$}}
\put(55,25){\makebox(0,0)}
\put(50,22){\circle{2}}
\put(51,22){\line(1,0){8}}
\put(60,18){\makebox(0,0){$1$}}
\put(60,22){\circle{2}}
\put(65,22){\makebox(0,0){$\cdots$}}
\put(70,22){\circle{2}}
\put(70,18){\makebox(0,0){$1$}}
\put(75,25){\makebox(0,0){$A_{3}$}}
\put(75,25){\makebox(0,0)}
\put(71,22){\line(1,0){8}}
\put(80,22){\circle{2}}
\put(80,18){\makebox(0,0){$1$}}
\put(85,18){\makebox(0,0){$A_{2}$}}
\put(85,19){\makebox(0,0)}
\put(81,22){\line(1,0){8}}
\put(90,22){\circle{2}}
\put(95,18){\makebox(0,0){$A_{1}$}}
\put(90,25){\makebox(0,0){$1$}}
\put(95,19){\makebox(0,0)}
\put(91,22){\line(1,0){8}}
\put(100,22){\circle{2}}
\put(100,25){\makebox(0,0){$1$}}
\put(78.5,30.5){$\blacksquare$}
\put(83,26){\makebox(0,0){$A_{0}$}}
\put(80,31){\line(0,-1){8}}
\put(83,32){\makebox(0,0){$1$}}
\put(85,28){\makebox(0,0)}
\end{picture}
\caption{A quiver representation of type $H_{n}$ -- orientation not shown.}
\label{fig-Q-Hn1}
\end{figure}

\item[2.] $\mathbf{a}=(1,1,1,\underbrace{2, 2, \ldots, 2}_{i}, \underbrace{1, \ldots, 1}_{j},1)\in \mathbb{N}^n$ where  $i\geq 1$, $j\geq 0$ with $i+j=n-4$. 

 The corresponding quiver representation in $\textnormal{Rep}(Q ,{\bf a})$ is given in Figure \ref{fig-Q-Hn2}.

\begin{figure}[ht!]
\setlength{\unitlength}{1mm}
\begin{picture}(125,25)(5,10)
\put(30,18){\makebox(0,0){$1$}}
\put(35,25){\makebox(0,0)}
\put(30,22){\circle{2}}
\put(40,22){\circle{2}}
\put(40,18){\makebox(0,0){$1$}}
\put(45,25){\makebox(0,0)}
\put(31,22){\line(1,0){8}}
\put(45,22){\makebox(0,0){$\cdots$}}
{\qbezier(40,15)(45,12)(50,15)}
\put(50,18){\makebox(0,0){$1$}}
\put(45,10){\makebox(0,0){$j$}}
\put(55,25){\makebox(0,0)}
\put(50,22){\circle{2}}
\put(51,22){\line(1,0){8}}
\put(60,18){\makebox(0,0){$2$}}
\put(60,22){\circle{2}}
\put(65,22){\makebox(0,0){$\cdots$}}
{\qbezier(60,15)(70,12)(80,15)}
\put(70,10){\makebox(0,0){$i$}}
\put(70,22){\circle{2}}
\put(70,18){\makebox(0,0){$2$}}
\put(75,25){\makebox(0,0)}
\put(71,22){\line(1,0){8}}
\put(80,22){\circle{2}}
\put(80,18){\makebox(0,0){$2$}}
\put(85,19){\makebox(0,0)}
\put(81,22){\line(1,0){8}}
\put(90,22){\circle{2}}
\put(90,18){\makebox(0,0){$1$}}
\put(95,19){\makebox(0,0)}
\put(91,22){\line(1,0){8}}
\put(100,22){\circle{2}}
\put(100,18){\makebox(0,0){$1$}}
\put(78.5,30.5){$\blacksquare$}
\put(80,31){\line(0,-1){8}}
\put(83,32){\makebox(0,0){$1$}}
\put(100,26){\makebox(0,0){$v_1$}}
\put(85,28){\makebox(0,0)}
\end{picture}
\caption{A quiver representation of type $H_{n}$ with $i\geq 1$, $j\geq 0$ with $i+j=n-4$ -- orientation not shown.}
\label{fig-Q-Hn2}
\end{figure}
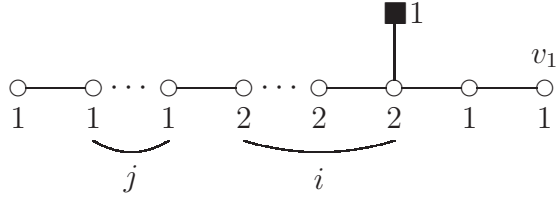

 Notice that a reflection with respect to $v_1$ transforms the representation into a representation as shown in Figure \ref{fig-Q-Dn-1}, whose underlying graph is of the Dynkin type $D_{n-1}$, with the root $\mathbf{a}'=(1,0,1,2,\ldots,2,1,\ldots,1,1)$. Hence the claim follows.\\
\begin{figure}[h!]
\setlength{\unitlength}{1mm}
\begin{picture}(125,25)(0,10)
\put(30,18){\makebox(0,0){$1$}}
\put(35,25){\makebox(0,0)}
\put(30,22){\circle{2}}
\put(40,22){\circle{2}}
\put(40,18){\makebox(0,0){$1$}}
\put(45,25){\makebox(0,0)}
\put(31,22){\line(1,0){8}}
\put(45,22){\makebox(0,0){$\cdots$}}
{\qbezier(40,15)(45,12)(50,15)}
\put(45,10){\makebox(0,0){$j$}}
\put(50,18){\makebox(0,0){$1$}}
\put(55,25){\makebox(0,0)}
\put(50,22){\circle{2}}
\put(51,22){\line(1,0){8}}
\put(60,18){\makebox(0,0){$2$}}
\put(60,22){\circle{2}}
\put(65,22){\makebox(0,0){$\cdots$}}
{\qbezier(60,15)(70,12)(80,15)}
\put(70,10){\makebox(0,0){$i$}}
\put(70,22){\circle{2}}
\put(70,18){\makebox(0,0){$2$}}
\put(75,25){\makebox(0,0)}
\put(71,22){\line(1,0){8}}
\put(80,22){\circle{2}}
\put(80,18){\makebox(0,0){$2$}}
\put(85,19){\makebox(0,0)}
\put(81,22){\line(1,0){8}}
\put(90,22){\circle{2}}
\put(90,18){\makebox(0,0){$1$}}
\put(95,19){\makebox(0,0)}
\put(91,22){\line(1,0){8}}
\put(100,22){\circle{2}}
\put(100,18){\makebox(0,0){$0$}}
\put(78.5,30.5){$\blacksquare$}
\put(80,31){\line(0,-1){8}}
\put(83,32){\makebox(0,0){$1$}}
\put(100,26){\makebox(0,0){$v_1$}}
\put(85,28){\makebox(0,0)}
\end{picture}
\caption{A quiver representation of type $H_{n}$ with $i\geq 1$, $j\geq 0$ and $i+j=n-4$ -- orientation not shown.}
\label{fig-Q-Dn-1}
\end{figure}
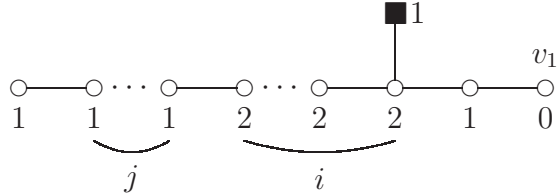

\item[3.] $\mathbf{a}=(1,1,2, \underbrace{2,2, \ldots, 2}_{i}, \underbrace{1, \ldots, 1}_{j},1)$,  where $i\geq 1$, $j\geq 0$ with $i+j=n-4$.

 A generic quiver {   re}presentation in $\textnormal{Rep}(Q ,{\bf a})$ is given in Figure \ref{fig-Q-Hn3}. 
We will show that such representation can be transformed into a quiver representation given in {   Case} 2.
\begin{figure}[h!]
\setlength{\unitlength}{1mm}
\begin{picture}(125,25)(0,10)
\put(30,18){\makebox(0,0){$1$}}
\put(35,25){\makebox(0,0)}
\put(30,22){\circle{2}}
\put(40,22){\circle{2}}
\put(40,18){\makebox(0,0){$1$}}
\put(45,25){\makebox(0,0)}
\put(31,22){\line(1,0){8}}
\put(45,22){\makebox(0,0){$\cdots$}}
\put(50,18){\makebox(0,0){$1$}}
{\qbezier(40,15)(45,12)(50,15)}
\put(45,10){\makebox(0,0){$j$}}
\put(55,25){\makebox(0,0)}
\put(50,22){\circle{2}}
\put(51,22){\line(1,0){8}}
\put(60,18){\makebox(0,0){$2$}}
\put(60,22){\circle{2}}
\put(65,22){\makebox(0,0){$\cdots$}}
{\qbezier(60,15)(70,12)(80,15)}
\put(70,10){\makebox(0,0){$i$}}
\put(70,22){\circle{2}}
\put(70,18){\makebox(0,0){$2$}}
\put(75,25){\makebox(0,0)}
\put(71,22){\line(1,0){8}}
\put(80,22){\circle{2}}
\put(80,18){\makebox(0,0){$2$}}
\put(82,26){\makebox(0,0){$v_3$}}
\put(85,19){\makebox(0,0)}
\put(81,22){\line(1,0){8}}
\put(90,22){\circle{2}}
\put(90,18){\makebox(0,0){$2$}}
\put(90,26){\makebox(0,0){$v_2$}}
\put(95,19){\makebox(0,0)}
\put(91,22){\line(1,0){8}}
\put(100,22){\circle{2}}
\put(100,18){\makebox(0,0){$1$}}
\put(78.5,30.5){$\blacksquare$}
\put(80,31){\line(0,-1){8}}
\put(83,32){\makebox(0,0){$1$}}
\put(100,26){\makebox(0,0){$v_1$}}
\put(85,28){\makebox(0,0)}
\end{picture}
\caption{A quiver representation of type $H_{n}$ with $i\geq 1$, $j\geq 0$ and $i+j=n-4$ -- orientation not shown.}
\label{fig-Q-Hn3}
\end{figure}
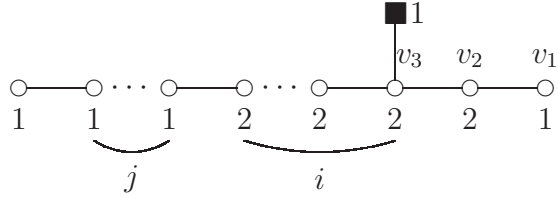

 Let us consider the subquiver $Q'$ consisting of the vertices $v_1,v_2,v_3$. There are four possible choices of orientation for $Q'$.

\begin{enumerate}\item[(a)] \setlength{\unitlength}{1mm}
\begin{picture}(80,10)(20,20)
\put(30,18){\makebox(0,0){$2$}}
\put(35,25){\makebox(0,0)}
\put(30,22){\circle{2}}
\put(40,22){\circle{2}}
\put(40,18){\makebox(0,0){$2$}}
\put(45,25){\makebox(0,0)}
\put(31,22){\line(1,0){8}}
\put(37,22){\vector(-1,0){3}}
\put(41,22){\line(1,0){8}}
\put(41,22){\vector(1,0){5}}
\put(50,18){\makebox(0,0){$1$}}
\put(55,25){\makebox(0,0)}
\put(50,22){\circle{2}}
\end{picture}

\item[(b)] \setlength{\unitlength}{1mm}
\begin{picture}(80,10)(20,20)
\put(30,18){\makebox(0,0){$2$}}
\put(35,25){\makebox(0,0)}
\put(30,22){\circle{2}}
\put(40,22){\circle{2}}
\put(40,18){\makebox(0,0){$2$}}
\put(45,25){\makebox(0,0)}
\put(31,22){\line(1,0){8}}
\put(31,22){\vector(1,0){5}}
\put(47,22){\vector(-1,0){3}}
\put(41,22){\line(1,0){8}}
\put(50,18){\makebox(0,0){$1$}}
\put(55,25){\makebox(0,0)}
\put(50,22){\circle{2}}
\end{picture}

\item[(c)]
\setlength{\unitlength}{1mm}
\begin{picture}(80,10)(20,20)
\put(30,18){\makebox(0,0){$2$}}
\put(35,25){\makebox(0,0)}
\put(30,22){\circle{2}}
\put(40,22){\circle{2}}
\put(40,18){\makebox(0,0){$2$}}
\put(45,25){\makebox(0,0)}
\put(31,22){\line(1,0){8}}
\put(41,22){\line(1,0){8}}
\put(37,22){\vector(-1,0){3}}
\put(47,22){\vector(-1,0){3}}
\put(50,18){\makebox(0,0){$1$}}
\put(55,25){\makebox(0,0)}
\put(50,22){\circle{2}}
\end{picture}

\item[(d)]
\setlength{\unitlength}{1mm}
\begin{picture}(80,10)(20,20)
\put(30,18){\makebox(0,0){$2$}}
\put(35,25){\makebox(0,0)}
\put(30,22){\circle{2}}
\put(40,22){\circle{2}}
\put(40,18){\makebox(0,0){$2$}}
\put(45,25){\makebox(0,0)}
\put(31,22){\line(1,0){8}}
\put(41,22){\line(1,0){8}}
\put(31,22){\vector(1,0){5}}
\put(41,22){\vector(1,0){5}}
\put(50,18){\makebox(0,0){$1$}}
\put(55,25){\makebox(0,0)}
\put(50,22){\circle{2}}
\end{picture} \\
\end{enumerate}

 Notice that if the orientation between $v_1,v_2$ and $v_3$ is as in (c) or (d), a reflection with respect to $v_1$ reverses the arrow between $v_1$ and $v_2$ but leaves the dimensions unchanged. Therefore, it transforms the representation into one of the cases (a) or (b). If the orientation between $v_1,v_2$ and $v_3$ is as in (a) or (b), a reflection with respect to $v_2$ transforms the representation into a representation given in Figure \ref{fig-Q-Hn2}. Therefore the claim follows. \\

\item[4.]  $\mathbf{a}=(1,1,2, \underbrace{3, 3, \ldots, 3}_{i},\underbrace{2, \ldots, 2}_{j},1)$, where 
{   $i, j \geq 1$} with $i+j=n-4$.

 Let us consider the representation depicted in Figure \ref{fig-Q-Hn4}. We will show that it can be transformed into a quiver representation studied in {   Case} 3. Let us first study the subquiver consisting of the vertices $v_{n-1},v_{n-2},v_{n-3}$. Notice that there are four possible choices of orientation between $v_{n-1},v_{n-2},v_{n-3}$ similar to the cases (a)-(d) for a root of 
 {   Case 3} above. \\
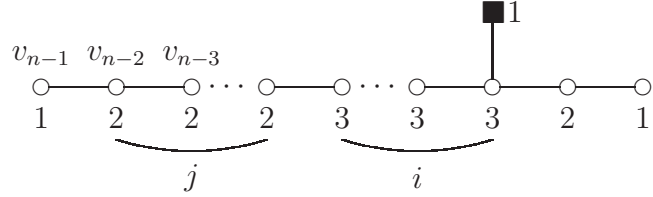
\begin{figure}[h!]
\setlength{\unitlength}{1mm}
\begin{picture}(115,25)(-5,10)
\put(20,18){\makebox(0,0){$1$}}
\put(30,18){\makebox(0,0){$2$}}
\put(35,25){\makebox(0,0)}
\put(20,22){\circle{2}}
\put(21,22){\line(1,0){8}}
\put(30,22){\circle{2}}
\put(40,22){\circle{2}}
\put(20,26){\makebox(0,0){$v_{n-1}$}}
\put(40,18){\makebox(0,0){$2$}}
\put(45,25){\makebox(0,0)}
\put(31,22){\line(1,0){8}}
\put(30,26){\makebox(0,0){$v_{n-2}$}}
\put(40,26){\makebox(0,0){$v_{n-3}$}}
\put(45,22){\makebox(0,0){$\cdots$}}
\put(50,18){\makebox(0,0){$2$}}
{\qbezier(30,15)(40,12)(50,15)}
\put(40,10){\makebox(0,0){$j$}}
\put(55,25){\makebox(0,0)}
\put(50,22){\circle{2}}
\put(51,22){\line(1,0){8}}
\put(60,18){\makebox(0,0){$3$}}
\put(60,22){\circle{2}}
\put(65,22){\makebox(0,0){$\cdots$}}
{\qbezier(60,15)(70,12)(80,15)}
\put(70,10){\makebox(0,0){$i$}}
\put(70,22){\circle{2}}
\put(70,18){\makebox(0,0){$3$}}
\put(75,25){\makebox(0,0)}
\put(71,22){\line(1,0){8}}
\put(80,22){\circle{2}}
\put(80,18){\makebox(0,0){$3$}}
\put(85,19){\makebox(0,0)}
\put(81,22){\line(1,0){8}}
\put(90,22){\circle{2}}
\put(90,18){\makebox(0,0){$2$}}
\put(95,19){\makebox(0,0)}
\put(91,22){\line(1,0){8}}
\put(100,22){\circle{2}}
\put(100,18){\makebox(0,0){$1$}}
\put(78.5,30.5){$\blacksquare$}
\put(80,31){\line(0,-1){8}}
\put(83,32){\makebox(0,0){$1$}}
\put(85,28){\makebox(0,0)}
\end{picture}
\caption{A quiver representation of type $H_{n}$ with 
{   $i, j \geq 1$} and $i+j=n-4$ -- orientation not shown.}
\label{fig-Q-Hn4}
\end{figure}

 Therefore, by a certain number of reflection functors, we can deduce that a representation of $H_n$ with a root as in Figure \ref{fig-Q-Hn4} can be transformed into a representation shown in Figure \ref{fig-Q-Hn6}.
\begin{figure}[h!]
\setlength{\unitlength}{1mm}
\begin{picture}(115,25)(-5,10)
\put(20,18){\makebox(0,0){$0$}}
\put(30,18){\makebox(0,0){$0$}}
\put(35,25){\makebox(0,0)}
\put(20,22){\circle{2}}
\put(30,22){\circle{2}}
\put(40,22){\circle{2}}
\put(40,18){\makebox(0,0){$1$}}
\put(45,25){\makebox(0,0)}
\put(31,22){\line(1,0){8}}
\put(25,22){\makebox(0,0){$\cdots$}}
\put(41,22){\line(1,0){8}}
\put(50,18){\makebox(0,0){$2$}}
\put(55,25){\makebox(0,0)}
\put(50,22){\circle{2}}
\put(51,22){\line(1,0){8}}
\put(60,18){\makebox(0,0){$3$}}
\put(60,22){\circle{2}}
\put(65,22){\makebox(0,0){$\cdots$}}
{\qbezier(60,15)(70,12)(80,15)}
\put(70,10){\makebox(0,0){$i$}}
\put(70,22){\circle{2}}
\put(70,18){\makebox(0,0){$3$}}
\put(75,25){\makebox(0,0)}
\put(71,22){\line(1,0){8}}
\put(80,22){\circle{2}}
\put(80,18){\makebox(0,0){$3$}}
\put(60,26){\makebox(0,0){$v_{i+2}$}}
\put(85,19){\makebox(0,0)}
\put(81,22){\line(1,0){8}}
\put(90,22){\circle{2}}
\put(90,18){\makebox(0,0){$2$}}
\put(70,26){\makebox(0,0){$v_4$}}
\put(78,26){\makebox(0,0){$v_3$}}
\put(95,19){\makebox(0,0)}
\put(91,22){\line(1,0){8}}
\put(100,22){\circle{2}}
\put(100,18){\makebox(0,0){$1$}}
\put(78.5,30.5){$\blacksquare$}
\put(80,31){\line(0,-1){8}}
\put(83,32){\makebox(0,0){$1$}}
\put(85,28){\makebox(0,0)}
\end{picture}
\caption{A quiver representation of type $H_{n}$ with $1\leq i \leq {   n-5}$ -- orientation not shown.}
\label{fig-Q-Hn6}
\end{figure}

 Moreover, another set of reflections with respect to the vertices $v_{i+2}, \ldots, v_4$ in the given order, possibly combined with another set of reflections with respect to the vertices coming before them to reverse arrows if needed,  yields a representation of the quiver given in Figure \ref{fig-Q-Hn7}.

\begin{figure}[h!]
\setlength{\unitlength}{1mm}
\begin{picture}(115,25)(-5,10)
\put(20,18){\makebox(0,0){$0$}}
\put(30,18){\makebox(0,0){$0$}}
\put(35,25){\makebox(0,0)}
\put(20,22){\circle{2}}
\put(30,22){\circle{2}}
\put(40,22){\circle{2}}
\put(40,18){\makebox(0,0){$1$}}
\put(45,25){\makebox(0,0)}
\put(31,22){\line(1,0){8}}
\put(25,22){\makebox(0,0){$\cdots$}}
\put(41,22){\line(1,0){8}}
\put(50,18){\makebox(0,0){$2$}}
\put(55,25){\makebox(0,0)}
\put(50,22){\circle{2}}
\put(51,22){\line(1,0){8}}
\put(60,18){\makebox(0,0){$2$}}
\put(60,22){\circle{2}}
\put(65,22){\makebox(0,0){$\cdots$}}
{\qbezier(60,15)(70,12)(80,15)}
\put(70,10){\makebox(0,0){$i$}}
\put(70,22){\circle{2}}
\put(70,18){\makebox(0,0){$2$}}
\put(75,25){\makebox(0,0)}
\put(71,22){\line(1,0){8}}
\put(80,22){\circle{2}}
\put(80,18){\makebox(0,0){$3$}}
\put(60,26){\makebox(0,0){$v_{i+2}$}}
\put(85,19){\makebox(0,0)}
\put(81,22){\line(1,0){8}}
\put(90,22){\circle{2}}
\put(90,18){\makebox(0,0){$2$}}
\put(70,26){\makebox(0,0){$v_4$}}
\put(78,26){\makebox(0,0){$v_3$}}
\put(91,22){\line(1,0){8}}
\put(100,22){\circle{2}}
\put(100,18){\makebox(0,0){$1$}}
\put(78.5,30.5){$\blacksquare$}
\put(80,31){\line(0,-1){8}}
\put(83,32){\makebox(0,0){$1$}}
\put(76,32){\makebox(0,0){$v_0$}}
\put(100,26){\makebox(0,0){$v_1$}}
\put(90,26){\makebox(0,0){$v_2$}}
\put(85,28){\makebox(0,0)}
\end{picture}
\caption{A quiver representation of type $H_{n}$ with $1\leq i \leq {   n-5}$ -- orientation not shown.}
\label{fig-Q-Hn7}
\end{figure}

 Finally, if $v_3$ is not a sink (or a source), by a series of reflections with respect to other vertices (except $v_0$ so that the dimension at $v_0$ stays $1$), we can transform it in to a sink (or a source). Then a reflection with respect to $v_3$ yields a quiver representation shown in Figure \ref{fig-Q-Hn3}. Therefore, the claim follows by the discussion in {   Case} 3. \\

\item[5.] $(1,1,2, \underbrace{3, 3, \ldots, 3}_{i}, \underbrace{2, \ldots, 2}_{j}, \underbrace{1, \ldots, 1}_{k},1)$ where {   $i,j,k \geq 1$} with $i+j+k=n-4$.
By Lemma \ref{lem-An} and the discussion for {   Case} 4, 
such quiver also defines a linear free divisor. 
\end{enum}

 This concludes the proof of the proposition for $H_n$ whence for all of the series.
\end{proof}

 Since the discriminant in $\textnormal{Rep}(Q ,{\bf a})$ is a linear free divisor, its complement is an open orbit  whose points corresponds to indecomposable representations. Consequently, 

\begin{corollary} 
{    Each root $\bf a$ in the triple root system} of a rational triple quiver is a real Schur root. \end{corollary}

 A linear free divisor in the representation space of a Dynkin quiver is locally quasihomogeneous, that is, it can locally be defined by a quasihomogeneous polynomial with respect to positive weights (\cite[Theorem 3.20]{granger-mond-schulze}). It follows by \cite[Remark 1.7.4]{macarro} that those linear free divisors satisfy the global logarithmic comparison theorem (GLCT). Now we have,

\begin{corollary} All linear free divisors arising from 
{    rational triple quivers with roots in the corresponding triple root systems 
} satisfy the global logarithmic comparison theorem. 
\end{corollary}
\begin{proof} 
{    
The statement follows from 
\cite[Theorem~4.1]{granger-mond-r-s}. }
\end{proof}

\begin{remark} By \cite[Theorem 3.17]{granger-mond-schulze}, all linear free divisors arising from {    rational triple quivers with roots in the corresponding triple root systems 
} are Euler homogeneous, i.e. $\textnormal{Der}(-\textnormal{log } D)$ contains an Euler vector field. 
\end{remark}

\section{Ideas on {   a} generalisation to rational trees with almost reduced Artin cycle}\label{sect-quasi}

{    
In preceding sections, we constructed the root systems and  the linear free divisors corresponding  to a rational triple tree by considering specially  its unique vertex with weight $3$ and  coefficient $1$ in the Artin cycle $Z$ of the tree. 
Here we will consider the rational $m$-tuple trees ${\cal R} $ in which there are multiple vertices with weight $\geq 3$ and  $1$ as their coefficients in the Artin cycle of ${\cal R}$.

\begin{defn} \cite{gustavsen}
For the Artin cycle $Z$ on a rational tree ${\cal R}$, we say that $Z$ is {\it almost reduced} if 
all coefficients of vertices with weight $\ge 3$ in $Z$ are $1$. 
If the Artin cycle $Z$ of a rational tree ${\cal R}$ is almost reduced, then we call 
${\cal R}$ a {\it rational tree with almost reduced Artin cycle}.  
\end{defn}

}

\begin{defn}\label{def-quasidetroot}
{    Let $\cal R $ be an $m$-tuple \rar{} of vertices $E_1, E_2,\ldots ,E_n$.} Consider the set $R({\cal R} )$ of divisors $Y=\sum m_iE_i$ satisfying the following conditions:
\begin{enumerate}
\item  For each $Y$ in $R({\cal R} )$, the support ${\rm Supp}(Y)$  is a connected tree.
\item The coefficients of each $E_i$ in $Y$ are all either positive or negative. 
\item $-2\geq (Y\cdot Y)\geq -m$ where $m=-Z^2$.
\end{enumerate} 
{    In this case, the set  $R({\cal R} )$ is called {\it the almost reduced rational root system}.  As before, we call an element of $R({\cal R})$ a {\it root} and, each $E_i$ a {\it simple (positive) root} of $R({\cal R})$. }  
\end{defn}

It is cumbersome to show all computations for this general case; however, the methods we used in the preceding sections to calculate number of roots and to discuss linear free divisors can be applied directly to {    an \rar}. 
It is crucial to note that {   Lemmas} \ref{lem-suppy} and \ref{support}, which are proved for rational triple root systems,  may not 
be true for every almost reduced rational root system. Hence we added the condition 3 above to Definition \ref{def-quasidetroot}.

{   Now let us consider the case given in Figure \ref{fig-star} which is the most basic form of   
rational trees with almost reduced Artin cycle. Note that $\ell$, the total number of subtrees,  is the valency of the central vertex and $\ell\leq w$ with $w$ weight of the central vertex. 
The highest root is given by the sum of the highest roots of each Dynkin subtree and the coefficient $1$ on the central vertex.  {    In order to compute the multiplicity $m$ of the tree we need to know the contribution of each subtree $\Gamma_i$ to $m$:} The contribution of $\Gamma_i$ of type $D_n$, $E_6$ and $E_7$ to $m$ is  $(-2)$ and, 
type $A_n$, $A_{k,t}$ and $D_q^s$ is $-1$, $-k$ and $r-1$, respectively, when $q=2r$ or $q=2r+1$ (see \cite{m-a-z}). 
Here, $D_q^s$ is a tree of type $D_q$  glued to the central vertex along one of its short tail. 
Thus we get
\begin{eqnarray}-m=Z^2=-w^2 &+&\sum_i\left((-1)\#A_{n_i}+(-2)(\#D_{n_i'}+\#E_6+\#E_7)\right) \nonumber \\
&+&\sum_i\left((-k_i)\#A_{k_i,t_i}+(r_i-1)\#D_{q_i}^{s_i}\right) \end{eqnarray}
where $\#$ denotes the number of trees of the indicated type appearing in  Figure  \ref{fig-star}. 
}

{    
A quiver whose underlying tree is an \rar{} is called an \textit{almost reduced rational  quiver}. }  
The complexity of the study of linear free divisors arising from a $m$-tuple almost reduced rational quivers depends on the number of the Dynkin subquivers. Here we will not attempt to study all cases. However, as we did in Section \ref{sect-free}, we can use a series of reflection functors to relate the divisor with a linear free divisor coming from a Dynkin quiver, and possibly a normal crossing divisor, under some conditions such as  $\langle  \textbf{a},\textbf{a} \rangle=1$. 

{    
\begin{example} Let $Q$ be the almost reduced rational quiver in Figure \ref{fig-quasi-ex-quiver1} which is formed by the quivers of type $A_{4,1}$, $D^s_5$ and $E_6$.  We note that $Z^2=-w_1-w_2-w_3+4$ with $w_1\geq 3$, $w_2\geq 4$ and $w_3\geq 5$ for the underlying rational tree. Then $Q$ with the assigned root determines a linear free divisor. This is shown by using the reflection functors. 
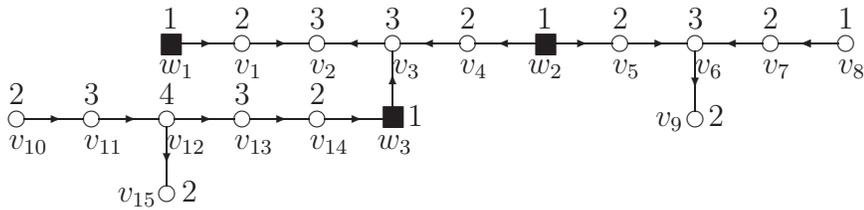
\begin{figure}[h!]
\setlength{\unitlength}{1mm}
\begin{picture}(120,37)(5,-2)
\put(30.5,24){$1$}%
\put(30,20.5){$\blacksquare$}
\put(30,18){$w_1$}%
\put(32,22){\vector(1,0){5}}
\put(32,22){\line(1,0){8}}
\put(41,22){\circle{2}}
\put(42,22){\line(1,0){8}}
\put(42,22){\vector(1,0){5}}
\put(40,18){$v_1$}
\put(40,24){$2$}%
\put(51,22){\circle{2}}
\put(52,22){\line(1,0){8}}
\put(60,22){\vector(-1,0){5}}
\put(50,18){$v_2$}
\put(50,24){$3$}%
\put(61,22){\circle{2}}
\put(62,22){\line(1,0){8}}
\put(61,18){$v_3$}
\put(60,24){$3$}%
\put(71,22){\circle{2}}
\put(72,22){\line(1,0){8}}
\put(70,22){\vector(-1,0){5}}
\put(70,18){$v_4$}
\put(70,24){$2$}%
\put(80.25,24){$1$}%
\put(79.75,20.5){$\blacksquare$}
\put(80,22){\vector(-1,0){5}}
\put(82,22){\vector(1,0){5}}
\put(79,18){$w_2$}%
\put(82.25,22){\line(1,0){8}}
\put(91.25,22){\circle{2}}
\put(92.25,22){\line(1,0){8}}
\put(92.25,22){\vector(1,0){5}}
\put(90.25,18){$v_5$}
\put(90.25,24){$2$}%
\put(101.25,22){\circle{2}}
\put(102.25,22){\line(1,0){8}}
\put(110.25,22){\vector(-1,0){5}}
\put(101.25,18){$v_6$}
\put(100.25,24){$3$}%
\put(111.25,22){\circle{2}}
\put(112.25,22){\line(1,0){8}}
\put(120.25,22){\vector(-1,0){5}}
\put(110.25,18){$v_7$}
\put(110.25,24){$2$}%
\put(121.25,22){\circle{2}}
\put(101.25,21){\line(0,-1){8}}
\put(120.25,22){\vector(-1,0){5}}
\put(120.25,18){$v_8$}
\put(120.25,24){$1$}%
\put(101.25,12){\circle{2}}
\put(101.25,21){\vector(0,-1){5}}
\put(96,11){$v_9$}
\put(103,11){$2$}%
\put(63,11){$1$}%
\put(59.5,10.75){$\blacksquare$}
\put(59,8){$w_3$}%
\put(61,21){\line(0,-1){8}}
\put(61,13){\vector(0,1){5}}
\put(11,12){\circle{2}}
\put(12,12){\line(1,0){8}}
\put(12,12){\vector(1,0){5}}
\put(10,8){$v_{10}$}
\put(10,14){$2$}%
\put(21,12){\circle{2}}
\put(22,12){\line(1,0){8}}
\put(22,12){\vector(1,0){5}}
\put(20,8){$v_{11}$}
\put(20,14){$3$}%
\put(31,12){\circle{2}}
\put(32,12){\line(1,0){8}}
\put(32,12){\vector(1,0){5}}
\put(31,8){$v_{12}$}
\put(30,14){$4$}%
\put(41,12){\circle{2}}
\put(42,12){\line(1,0){8}}
\put(42,12){\vector(1,0){5}}
\put(40,8){$v_{13}$}
\put(40,14){$3$}%
\put(51,12){\circle{2}}
\put(52,12){\line(1,0){8}}
\put(52,12){\vector(1,0){5}}
\put(50,8){$v_{14}$}
\put(50,14){$2$}%
\put(31,11){\line(0,-1){8}}
\put(31,2){\circle{2}}
\put(31,11){\vector(0,-1){5}}
\put(24.5,1){$v_{15}$}
\put(33,1){$2$}%
\end{picture}
\caption{An almost reduced rational quiver with its highest root.}
\label{fig-quasi-ex-quiver1}
\end{figure}
First, we apply reflection functors with respect to the vertices
\begin{eqnarray*}&& v_2,v_3,v_1,v_2,v_4,w_1,v_1,v_9,v_6,v_5,\\ && v_7,v_9,v_6,v_{10},
v_{11},v_{12},v_{13},v_{14},v_{15},\\&& v_{10},v_{11},v_{12},
v_{10},v_{11},v_{15},v_{13},v_{12}\end{eqnarray*}
in the given order, to transform $Q$  into the quiver $Q'$ shown in Figure \ref{fig-quasi-ex-quiver2}.  
\begin{figure}[h!]
\setlength{\unitlength}{1mm}
\begin{picture}(120,37)(5,-2)
\put(30.5,24){$0$}%
\put(30,20.5){$\blacksquare$}
\put(30,18){$w_1$}%
\put(32,22){\line(1,0){8}}
\put(41,22){\circle{2}}
\put(42,22){\line(1,0){8}}
\put(40,18){$v_1$}
\put(40,24){$0$}%
\put(51,22){\circle{2}}
\put(52,22){\line(1,0){8}}
\put(52,22){\vector(1,0){5}}
\put(50,18){$v_2$}
\put(50,24){$1$}%
\put(61,22){\circle{2}}
\put(62,22){\line(1,0){8}}
\put(70,22){\vector(-1,0){5}}
\put(61,18){$v_3$}
\put(60,24){$2$}%
\put(71,22){\circle{2}}
\put(72,22){\line(1,0){8}}
\put(72,22){\vector(1,0){5}}
\put(70,18){$v_4$}
\put(70,24){$1$}%
\put(80.25,24){$1$}%
\put(79.75,20.5){$\blacksquare$}
\put(79,18){$w_2$}%
\put(82.25,22){\line(1,0){8}}
\put(91.25,22){\circle{2}}
\put(92.25,22){\line(1,0){8}}
\put(100.25,22){\vector(-1,0){5}}
\put(90.25,22){\vector(-1,0){5}}
\put(90.25,18){$v_5$}
\put(90.25,24){$1$}%
\put(101.25,22){\circle{2}}
\put(102.25,22){\line(1,0){8}}
\put(102.25,22){\vector(1,0){5}}
\put(101.25,18){$v_6$}
\put(100.25,24){$1$}%
\put(111.25,22){\circle{2}}
\put(112.25,22){\line(1,0){8}}
\put(112.25,22){\vector(1,0){5}}
\put(110.25,18){$v_7$}
\put(110.25,24){$1$}%
\put(121.25,22){\circle{2}}
\put(101.25,21){\line(0,-1){8}}
\put(120.25,18){$v_8$}
\put(120.25,24){$1$}%
\put(101.25,12){\circle{2}}
\put(101.25,21){\vector(0,-1){5}}
\put(96,11){$v_9$}
\put(103,11){$1$}%
\put(63,11){$1$}%
\put(59.65,10.5){$\blacksquare$}
\put(59,8){$w_3$}%
\put(61,21){\line(0,-1){8}}
\put(61,21){\vector(0,-1){5}}
\put(11,12){\circle{2}}
\put(12,12){\line(1,0){8}}
\put(12,12){\vector(1,0){5}}
\put(10,8){$v_{10}$}
\put(10,14){$1$}%
\put(21,12){\circle{2}}
\put(22,12){\line(1,0){8}}
\put(22,12){\vector(1,0){5}}
\put(20,8){$v_{11}$}
\put(20,14){$1$}%
\put(31,12){\circle{2}}
\put(32,12){\line(1,0){8}}
\put(40,12){\vector(-1,0){5}}
\put(31,8){$v_{12}$}
\put(30,14){$1$}%
\put(41,12){\circle{2}}
\put(42,12){\line(1,0){8}}
\put(50,12){\vector(-1,0){5}}
\put(40,8){$v_{13}$}
\put(40,14){$1$}%
\put(51,12){\circle{2}}
\put(52,12){\line(1,0){8}}
\put(60,12){\vector(-1,0){5}}
\put(50,8){$v_{14}$}
\put(50,14){$1$}%
\put(31,11){\line(0,-1){8}}
\put(31,2){\circle{2}}
\put(31,3){\vector(0,1){5}}
\put(24.5,1){$v_{15}$}
\put(33,1){$1$}%
\end{picture}
\caption{The quiver $Q'$ corresponding to $Q$ in Figure \ref{fig-quasi-ex-quiver1}.}
\label{fig-quasi-ex-quiver2}
\end{figure}
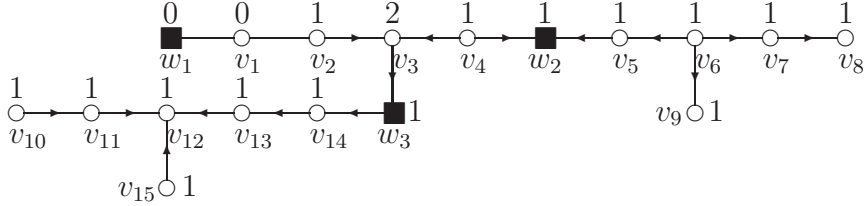

Next, we apply reflection functors with respect to the vertices
\[ v_8,v_7,v_9,v_6,v_5,w_2,v_{10},v_{11},v_{15},v_{12}, v_{13}, v_{14}  \] 
in the given order, to transform $Q'$  into the quiver $Q''$ shown in Figure \ref{fig-quasi-ex-quiver3}.

\begin{figure}[h!]
\setlength{\unitlength}{1mm}
\begin{picture}(120,37)(5,-2)
\put(30.5,24){$0$}%
\put(30,20.5){$\blacksquare$}
\put(30,18){$w_1$}%
\put(32,22){\line(1,0){8}}
\put(41,22){\circle{2}}
\put(42,22){\line(1,0){8}}
\put(40,18){$v_1$}
\put(40,24){$0$}%
\put(51,22){\circle{2}}
\put(52,22){\line(1,0){8}}
\put(52,22){\vector(1,0){5}}
\put(50,18){$v_2$}
\put(50,24){$1$}%
\put(61,22){\circle{2}}
\put(62,22){\line(1,0){8}}
\put(70,22){\vector(-1,0){5}}
\put(61,18){$v_3$}
\put(60,24){$2$}%
\put(71,22){\circle{2}}
\put(72,22){\line(1,0){8}}
\put(70,18){$v_4$}
\put(70,24){$1$}%
\put(80.25,24){$0$}%
\put(79.75,20.5){$\blacksquare$}
\put(79,18){$w_2$}%
\put(82.25,22){\line(1,0){8}}
\put(91.25,22){\circle{2}}
\put(92.25,22){\line(1,0){8}}
\put(90.25,18){$v_5$}
\put(90.25,24){$0$}%
\put(101.25,22){\circle{2}}
\put(102.25,22){\line(1,0){8}}
\put(101.25,18){$v_6$}
\put(100.25,24){$0$}%
\put(111.25,22){\circle{2}}
\put(112.25,22){\line(1,0){8}}
\put(110.25,18){$v_7$}
\put(110.25,24){$0$}%
\put(121.25,22){\circle{2}}
\put(101.25,21){\line(0,-1){8}}
\put(120.25,18){$v_8$}
\put(120.25,24){$0$}%
\put(101.25,12){\circle{2}}
\put(96,11){$v_9$}
\put(103,11){$0$}%
\put(63,11){$1$}%
\put(59.65,10.5){$\blacksquare$}
\put(59,8){$w_3$}%
\put(61,21){\line(0,-1){8}}
\put(61,21){\vector(0,-1){5}}
\put(11,12){\circle{2}}
\put(12,12){\line(1,0){8}}
\put(10,8){$v_{10}$}
\put(10,14){$0$}%
\put(21,12){\circle{2}}
\put(22,12){\line(1,0){8}}
\put(20,8){$v_{11}$}
\put(20,14){$0$}%
\put(31,12){\circle{2}}
\put(32,12){\line(1,0){8}}
\put(31,8){$v_{12}$}
\put(30,14){$0$}%
\put(41,12){\circle{2}}
\put(42,12){\line(1,0){8}}
\put(40,8){$v_{13}$}
\put(40,14){$0$}%
\put(51,12){\circle{2}}
\put(52,12){\line(1,0){8}}
\put(50,8){$v_{14}$}
\put(50,14){$0$}%
\put(31,11){\line(0,-1){8}}
\put(31,2){\circle{2}}
\put(24.5,1){$v_{15}$}
\put(33,1){$0$}%
\end{picture}
\caption{The quiver $Q''$ corresponding to $Q'$ in Figure \ref{fig-quasi-ex-quiver2}.}
\label{fig-quasi-ex-quiver3}
\end{figure}
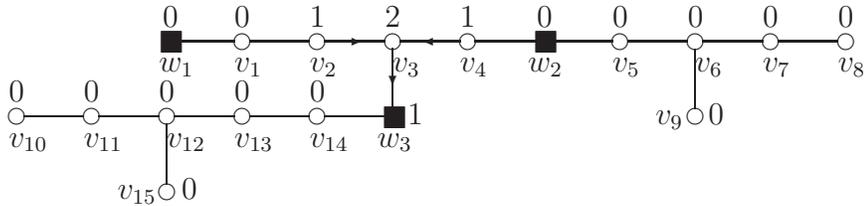

Notice that the subquiver of $Q''$ formed by the vertices $v_2,v_3,v_4$ and $w_3$ is of Dynkin quiver type $D_4$ with the root $(1,2,1,1)$.  So, by the results of \cite{granger-mond-schulze}, 
the quiver $Q$ also defines a linear free divisor.
\end{example}
} 

Therefore, we are motivated to state the following conjecture.
\begin{conj} If $Q$ is an almost reduced rational quiver and  $\mathbf{a}$ is a root in $R(Q)$ then $(Q,\mathbf{a})$ defines a linear free divisor. \end{conj}

\section*{Acknowledgments}
The third author is supported by the project 113F293 under the program 1001 of the Scientific and Technological Research Council of Turkey. She also thanks the Department of Mathematics at Columbia University for their hospitality during this work. 
{   
The first author was partially supported by 
JSPS KAKENHI Grant Number JP23540044 and JP15K04814.} 
The authors would also like to thank the referee for her/his careful reading and constructive comments.


   \Addresses


\begin{thebibliography}{99}

\bibitem{artin} 
M. Artin, On isolated rational singularities of surfaces, {\em Amer. J. Math.} \textbf{88} (1966), 129--136.

\bibitem{bern-gel-pono}
I. N. Bernstein, I. M. Gel'fand and V. A. Ponomarev, Coxeter functors and Gabriel's theorem, {\em Uspehi Mat. Nauk} \textbf{28} (1973) 19--33, translated in Russion Math. Surveys \textbf{28} (1973), 17--32.


\bibitem{bourbaki} 
N. Bourbaki, Groupes et alg\'ebres de Lie, Ch. IV,V,VI, Hermann, Paris, 1968.


\bibitem{buch-mond}
R.-O. Buchweitz and D. Mond, Linear free divisors and quiver representations, Singularities and computer algebra, {\em London Math. Soc. Lecture Note Ser.} \textbf{324}, Cambridge Univ. Press, Cambridge,  (2006), 41--77.

\bibitem{dejong2}
T. de Jong, Determinantal rational surface singularities, {\em Comp. Math.} \textbf{113} (1998), 67--90.

\bibitem{gabriel-2}
P. Gabriel, R{\'e}presentation ind{\'e}composables, S{\'e}minaire Bourbaki, Expos{\'e} \textbf{444} (1974), in {\em Springer Lecture Notes} 431 (1975), 143--169.

\bibitem{granger-mond-schulze}
M. Granger, D. Mond and M. Schulze, Free divisors in prehomogeneous vector spaces, {\em Proc. London Math. Soc.} \textbf{102} (5) (2011), 923--950.

\bibitem{granger-mond-r-s}
M. Granger, D. Mond, Nieto-Reyes and M. Schulze, Linear free divisors and the global logarithmic comparison theorem, {\em Ann. Inst. Fourier (Grenoble)} \textbf{59} (2) (2009), 811--850.

\bibitem{grauert}
H. Grauert, {\" U}ber Modifikationen und exzeptionnelle analytische Mengen, {\em Math. Ann. } \textbf{146} (1962), 331--368.

\bibitem{gustavsen}
T. S. Gustavsen, On the cotangent cohomology of rational surface singularities with almost reduced Artin cycle, \textit{Math. Nachr.} \textbf{279} (11)  (2006), 1185--1194.

\bibitem{kac}
V. Kac, Infinite root systems, representations of graphs and invariant theory, {\em Invent. Math.} \textbf{56} (1980), 57--92.

\bibitem{sato-kimura}
T. Kimura and M. Sato, A classification of irreducible prehomogeneous vector spaces and their relative invariants, {\em Nagaya Math. J.} \textbf{65} (1977), 1--155.

\bibitem{laufer}
H. Laufer, {\em Normal two-dimensional singularities}, Annals of Math. Studies 71, Princeton University Press, 1971.

\bibitem{Le-tosun} 
D.T. L{\^e} and M. Tosun, Combinatorics of rational surface singularities, {\em Comment. Math. Helv.} \textbf{79} (2004), 582--604.

\bibitem{macarro}
L. Narv{\'a}ez Macarro, Linearity conditions on the Jacobian ideal and logarithmic-meromorphic comparison for free divisors, {\em Singularities I, Contemp. Math,} Amer. Math. Soc., Providence, RI, \textbf{474} (2008), 245--269.


\bibitem{m-a-z}
Z. Oer, A. Ozkan and M. Tosun, Classification of rational singularities of multiplicity 5, {\em Int. J. of Pure and Appl. Mathematics} \textbf{41} (1) (2007), 85--110.

\bibitem{pinkham}
H. Pinkham, {\em Singularit{\'e}s rationalles de surfaces}, S{\'e}minaire sur les singularit{\'e}s des surfaces, Lecture Notes in Mathematics, vol. 777, Springer-Verlag, 1980.
  
\bibitem{riemens}
O. Riemenschneider, Die invarianten der endlichen Untergruppen von $GL(2,\mathbb{C})$, {\em Math. Z.} \textbf{153} (1977), 37--50.

\bibitem{saito}
K. Saito, Theory of logarithmic differential forms and logarithmic vector fields, {\em J. Fac. Sci. Univ. Tokyo Sect. Math.} \textbf{27} (1980), 265--291.

\bibitem{spivakovsky}
M. Spivakovsky, Sandwiched singularities and desingularization of surfaces by normalized Nash transformations, {\em Ann. of Math.} \textbf{131} (1990), 411--491.

\bibitem{M} 
M. Tosun, ADE surface singularities, chambers and toric varieties, {\em Proceeding of Franco-Japanese CIRM, Soc. Math. de France, S{\' e}minaire et Congr{\' e}s} \textbf{10} (2005), 341--350.

\bibitem{tjurina} 
G.N. Tyurina, Absolute isolatedness of rational singularities and rational triple points, {\em Fonc. Anal. Appl.} \textbf{2} (1968), 324--332.

\bibitem{wahl-equations}
J. M. Wahl, Equations defining rational singularities, {\em Ann. Scient. Ec. Norm. Sup. $4^\circ$ serie}, \textbf{10} (2) (1977), 231--264.

\bibitem{wunram}
J. Wunram, Reflexive modules on quotient surface singularities, {\em Math. Ann. } \textbf{279} (4) (1988), 83--598.


\end{thebibliography}
\end{document}